\newif\ifpictures
\numberwithin{equation}{section}
\newtheorem{thm}{Theorem}
\newtheorem{lemma}[thm]{Lemma}
\newtheorem{cor}[thm]{Corollary}
\theoremstyle{definition}
\newtheorem{example}[thm]{Example}
\newtheorem{remark1}[thm]{Remark}
\newtheorem{openproblem1}[thm]{Open problem}
\newtheorem{definition}[thm]{Definition}
\numberwithin{thm}{section}
\newcounter{FNC}[page]
\def\newfootnote#1{{\addtocounter{FNC}{2}$^\fnsymbol{FNC}$%
     \let\thefootnote\relax\footnotetext{$^\fnsymbol{FNC}$#1}}}
\newcommand{\C}{\mathbb{C}}
\newcommand{\N}{\mathbb{N}}
\newcommand{\R}{\mathbb{R}}
\renewcommand{\P}{\mathbb{P}}
\newcommand{\Z}{\mathbb{Z}}
\newcommand{\T}{\mathbb{T}}
\newcommand{\Sph}{\mathbb{S}}
\newcommand{\compl}{\mathsf{c}}
\DeclareMathOperator{\New}{New}
\DeclareMathOperator{\Res}{Res} % Resultant
\DeclareMathOperator{\Disc}{Disc} % Discriminant
\DeclareMathOperator{\vol}{vol} % vol
\let\Re\relax
\DeclareMathOperator{\Re}{Re}
\let\Im\relax
\DeclareMathOperator{\Im}{Im}
\title[]{Imaginary projections of polynomials}
\author{Thorsten J\"orgens}
\author{Thorsten Theobald}
\author{Timo de Wolff}
\address{Thorsten J\"orgens and Thorsten Theobald, 
  Goethe-Universit\"at, FB 12 -- Institut f\"ur Mathematik,
  Postfach 11 19 32, D-60054 Frankfurt am Main, Germany
}
\email{joergens@math.uni-frankfurt.de, theobald@math.uni-frankfurt.de}
\address{Timo de Wolff, TU Berlin, Stra{\ss}e des 17.~Juni 136, 10623 Berlin, Germany}
\email{dewolff@math.tu-berlin.de}
\date{\today}
\subjclass[2000]{14P10, 12D10}
\keywords{Imaginary projection, stable polynomial, convex algebraic geometry, amoeba,
  component of the complement} 
\begin{document}

\begin{abstract}
We introduce the imaginary projection of a multivariate polynomial
$f \in \C[\mathbf{z}]$ as the projection of the variety of $f$
onto its imaginary part,
$\mathcal{I}(f) \ = \ \{\Im(\mathbf{z}) \, : \, \mathbf{z} \in \mathcal{V}(f) \}$. 
Since a polynomial $f$ is stable if and only if 
$\mathcal{I}(f) \cap \R_{>0}^n \ = \ \emptyset$, the notion offers a
novel geometric view underlying stability questions of polynomials.

We show that the connected components of the complement of the closure of the imaginary projections 
are convex,  thus opening a central connection to the theory of amoebas and coamoebas.
Building upon this, the paper establishes structural properties
of the components of the complement, such as lower bounds on their maximal number,
proves a complete classification of the imaginary projections of quadratic polynomials
and characterizes the limit directions for polynomials of arbitrary degree.
\end{abstract}

\maketitle

\section{Introduction}

Recent years have seen a lot of interest in stable polynomials, see, e.g., 
\cite{borcea-braenden-2008, borcea-braenden-2009,
mss-interlacing2, wagner-2011} and the references therein.
A polynomial $f = f(\mathbf{z}) = f(z_1, \ldots, z_n) \in \C[\mathbf{z}] = \C[z_1,\ldots,z_n]$ is called \emph{stable} if every root $\mathbf{z}$ satisfies $\Im(z_j) \leq 0$ for some $j$.
We call $f$ \emph{real stable} if $f$ has real coefficients and
is stable.

As recent prominent applications,
Marcus, Spielman, and Srivastava employed stable polynomials in the proof of the
Kadison-Singer Conjecture \cite{mss-interlacing2} and in the existence proof of families of
bipartite Ramanujan graphs of every degree larger than two~\cite{mss-interlacing1}. Stable polynomials 
have also been used by Borcea and Br\"and\'{e}n to prove Johnson's 
Conjecture \cite{borcea-braenden-2008} and in Gurvits' simple proof of a generalization 
of van der Waerden's Conjecture for permanents \cite{gurvits-2008}.
Moreover, there are strong connections to hyperbolic polynomials and their hyperbolicity cones, see Section~\ref{se:stable}.

In this paper, we 
initiate to study the underlying projections on the imaginary 
parts from a geometric point of view. Given a polynomial
$f  \in \C[\mathbf{z}]$, introduce the \emph{imaginary projection} of $f$ as
\[
  \mathcal{I}(f) \ = \ \left\{\Im(\mathbf{z}) \, : \, \mathbf{z} \in \mathcal{V}(f) \right\} \ \subseteq \ \R^n \, ,
\]
where $\mathcal{V}(f)$ denotes the variety of $f$ and $\Im(\mathbf{z})=(\Im(z_1),\ldots,\Im(z_n))$. So, in particular,
$f$ is stable if and only if 
\[
\mathcal{I}(f) \cap \R_{>0}^n \ = \ \emptyset \, .
\]

Our work is motivated by the theory of \textit{amoebas} as well as by the
general goal to reveal and understand convexity phenomena in 
algebraic geometry, see \cite{bpt-2013}.
Amoebas are the images of algebraic varieties in the algebraic torus $(\C^*)^n$ under the log-absolute map:
\[
  \mathcal{A}(f) \ = \ \{(\log|z_1|, \ldots, \log|z_n|) \, : \,
  \mathbf{z} \in \mathcal{V}(f) \cap (\C^*)^n \} \ \subseteq \ \R^n \, ,
\]
see \cite{gelfand-1994}. Coamoebas employ the arg-map 
rather than the log-absolute map; see, e.g., \cite{forsgard-diss}.

For amoebas, important structural results as well as their occurrences in
a broad spectrum of mathematical disciplines
have been intensively studied, see
\cite{mikhalkin-different-faces,passare-rullgard-2004,passare-tsikh-2005} as well
as the recent survey \cite{dewolff-amoeba-survey}.
For coamoebas, investigations are much more recent
\cite{forsgard-diss,forsgard-johansson-2015,nisse-sottile-2013}.
A prominent result states that the complement of an amoeba as well as
the complement of the closure of a coamoeba consists of finitely many convex components, see
\cite{forsgard-johansson-2015,gelfand-1994}. As a key result, which
also motivates our study, we show that the closure of the complement of the imaginary 
projection of a polynomial consists of finitely many
convex components as well, see Theorem~\ref{th:convex}.

While there are important analogies among amoebas, coamoebas, and
imaginary projections, there are also fundamental differences between 
these structures. The fibers of the log-absolute
maps underlying amoebas are compact, whereas for imaginary
projections they are not compact. Furthermore, the limit directions 
of amoebas, also known as tentacles, are characterized by the logarithmic limit sets and
thus carry a polyhedral structure; see 
\cite[Theorem 1.4.2]{maclagan-sturmfels-book}. In contrast,
the limit directions of the imaginary projections are not polyhedral in general,
see Section~\ref{sec:infinity}. For coamoebas, which are defined on a 
torus,  Nisse and Sottile have introduced a variant of the logarithmic limit sets,
by considering accumulation points of arguments of sequences with unbounded 
logarithm \cite{nisse-sottile-2013}.

Building upon the fundamental convexity result,
we study structural properties of imaginary projections.
We also give lower bounds on
the maximal number of components of the complement, see Corollary~\ref{co:lowerbound}.

We investigate important subclasses, such as quadratic and
multilinear polynomials. For the class of real quadratic polynomials,
we can provide a complete classification of the
imaginary projections,  see Theorem~\ref{th:quadrics}.
Indeed, this
classification result in Theorem~\ref{th:quadrics} is somewhat unexpected,
since it involves various qualitatively different cases.

Starting from the well-known results
on tentacles of amoebas, we characterize the limit points of the imaginary
projections. Contrary to the case of the amoeba of a non-zero polynomial $f$,
it is possible that every point on the sphere $\Sph^{n-1}$ is a limit direction of
the imaginary projection of $f$.
For $f \in \C[\mathbf{z}]$, we provide a criterion for one-dimensional families of limit
directions at infinity.
In the case $n=2$ this also characterizes the situations
that all points are limit points. See Theorem~\ref{thm:limit-directions-general-case}
and Corollary~\ref{co:bivar-limit} for further details.

It is easy to see that real projections of complex polynomials should
behave in the same way as imaginary projections, 
since one projection is easily
seen to be an instance of the other by replacing the polynomial $f(x)$
with $f(\sqrt{-1} x)$. However, we focus on imaginary projections since
the latter projections are more naturally connected to stability and
hyperbolicity in the setting of real polynomials.

\smallskip

Our paper is structured as follows. Section~\ref{se:prelim} collects 
existing facts on stable polynomials as well as on amoebas and coamoebas. In 
Section~\ref{se:structure}, we study the structure of imaginary projections.
Section~\ref{sec:complcomponents} considers the components of the complement.
In Section~\ref{se:quadratic}, we discuss quadratic and multilinear (in the
sense of multi-affine-linear) polynomials, and 
Section~\ref{sec:infinity} is concerned with
the situation at infinity. In Section~\ref{se:openquestions} we close with some
open questions.

\section{Preliminaries\label{se:prelim}}

We collect basic notions on stable polynomials as well as on amoebas and coamoebas. 
Let $\R_{\ge 0}$ and $\R_{>0}$ denote the set of non-negative and the set of strictly
positive real numbers.

Throughout the paper, we use bold letters for vectors, e.g., $\mathbf{z}=(z_1,\ldots,z_n)\in\C^n$. Unless stated otherwise, the dimension of these vectors is $n$.  Denote by $\mathcal{V}(f)$
the complex variety of a polynomial $f \in \C[\mathbf{z}]$ and by $\mathcal{V}_{\R}(f)$ the real variety of $f$.

We denote by $\Re(z)$ and $\Im(z)$ the real and the imaginary part of a point $z\in\C$, i.e., 
$z=\Re(z) +i \Im(z)$, 
and component-wise for points $\mathbf{z}\in\C^n$. 
For an arbitrary set $M\subseteq\C^n$ we understand $\Re(M)$ and $\Im(M)$ 
as the real parts and the imaginary parts of all elements in $M$. Moreover, for a polynomial $f\in\C[\mathbf{z}]$ we denote by $\Re(f)$ and $\Im(f)$ the real part and the imaginary part of $f$ after the realification $\mathbf{z}=\mathbf{x}+i\mathbf{y}\in\C^n\mapsto(\mathbf{x},\mathbf{y})\in\R^{2n}$, i.e., $f(\mathbf{x},\mathbf{y})=\Re f(\mathbf{x},\mathbf{y})+i\Im f(\mathbf{x},\mathbf{y})$. Note that $\Re(f)$ and $\Im(f)$ are real polynomials in $\R[\mathbf{x},\mathbf{y}]$.

Furthermore, we use the notations $\mathcal{H}_\C^n$ for the set $\{\mathbf{z}\in\C^n: \Im(z_j)>0, \, 1 \le j \le n\}$ and $\mathcal{H}_\R^n=\Im\mathcal{H}_\C^n$, which is the positive orthant. 

\subsection{Stable polynomials\label{se:stable}}

Based on the notions of stability and real stability defined in the introduction,
we collect the following statements and properties. As a general source
on stability of polynomials, we refer to~\cite{wagner-2011} and the references therein.

\begin{definition}\label{def:stability}
	A polynomial $f \in \C[\mathbf{z}]$ is called \emph{stable}
	if it has no root $\mathbf{z}$ in $\mathcal{H}_\C^n$.
\end{definition}

\begin{example}\cite[Proposition 2.4]{borcea-braenden-2008}
For positive semidefinite $d\times d$-matrices $A_1,\ldots,A_n$ and a Hermitian 
$d\times d$-matrix $B$, the polynomial
\begin{equation*}
f(\mathbf{z}) \ = \ \det(z_1A_1+\cdots+z_nA_n+B)
\end{equation*}
is real stable or identically zero.
\end{example}

There is a close connection between stable, homogeneous polynomials and hyperbolic polynomials. A homogeneous polynomial $f\in\R[\mathbf{z}]$ is called 
\emph{hyperbolic} 
in direction $\mathbf{e}\in\R^n$, if $f(\mathbf{e})\neq0$ and for every $\mathbf{x} \in \R^n$ the real function
$t\mapsto f(\mathbf{x}+t\mathbf{e})$ has only real roots. It is known that a
homogeneous polynomial $f\in\R[\mathbf{z}]$ is real stable if and only if $f$ is hyperbolic with respect to every point in the positive orthant, see \cite{garding-59, wagner-2011}. 
 
Stability of univariate polynomials can be tested as follows.
Here, we call two univariate polynomials $f, g \in \R[\mathbf{z}]$ in \emph{proper position}, $f\ll g$, if the zeros of $f$ and $g$ \emph{interlace} (i.e., alternate, see 
\cite{braenden2007,mss-interlacing1}),
and if their \textit{Wronskian} $W[f,g]:=f'g-fg'$ is non-negative on $\R$. 
Note that if the roots of two polynomials $f$ and $g$ interlace, then $W[f,g]$ is non-negative or 
non-positive.

\begin{thm}{\normalfont (Hermite-Biehler, see \cite[Thm.~6.3.4]{rahman} or \cite{wagner-2011})} A non-constant univariate polynomial $f \in \C[z]$ is stable if and only if $\Im f \ll \Re f$.
\end{thm}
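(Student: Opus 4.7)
The plan is to analyze the phase function $x\mapsto\arg f(x)$ as $x$ traverses the real axis and to translate stability into its monotone behavior. Write $f=p+iq$ with univariate real polynomials $p=\Re f,\, q=\Im f\in\R[z]$, and factor $f(z)=c\prod_{j=1}^{d}(z-\alpha_j)$ with $d=\deg f$. For real $x$ the contribution of a factor $x-\alpha_j$ to $\arg f(x)$ is easy to describe: if $\Im\alpha_j<0$, then $\Im(x-\alpha_j)>0$ and $\arg(x-\alpha_j)$ strictly decreases from $\pi$ to $0$ as $x$ runs through $\R$; if $\Im\alpha_j>0$, it strictly increases from $-\pi$ to $0$; if $\alpha_j\in\R$, it jumps by $-\pi$ at $x=\alpha_j$. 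Hence $f$ is stable (no $\alpha_j$ in the open upper half-plane) if and only if $x\mapsto\arg f(x)$ is weakly monotonically decreasing on $\R$ with total variation $-d\pi$.

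Next, I would translate this monotonicity into the interlacing and Wronskian language. Wherever $p^2+q^2\neq 0$, differentiating $\arg f=\arctan(q/p)$ gives
\[
  \frac{d}{dx}\arg f(x) \ = \ \frac{q'(x)p(x)-q(x)p'(x)}{p(x)^2+q(x)^2} \ = \ \frac{W[q,p](x)}{p(x)^2+q(x)^2}.
\]
Thus monotonicity of $\arg f$ is equivalent to a fixed sign of the Wronskian $W[q,p]$, matching the sign condition in the definition of $\Im f\ll\Re f$. Furthermore, zeros of $p$ correspond to points with $\arg f\in\{\pi/2+k\pi:k\in\Z\}$ and zeros of $q$ to points with $\arg f\in\{k\pi:k\in\Z\}$; together with strict monotonicity of $\arg f$ and its total variation $d\pi$, this forces $p$ and $q$ to be real-rooted and their zeros to strictly interlace on $\R$.

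For the converse direction, assume $\Im f\ll\Re f$. The Wronskian sign and the interlacing data again give monotonicity of $\arg f$ on $\R$ by the displayed formula, with total variation $-d\pi$. Applying the argument principle on a large upper-half-plane semicircle of radius $R\to\infty$ --- where the leading behavior $f\sim cz^d$ contributes $+d\pi$ to $\arg f$ along the semicircle --- the number of roots of $f$ in the open upper half-plane equals $(d\pi+\Delta_{\R}\arg f)/(2\pi)=0$, so $f$ is stable. The main obstacle will be the bookkeeping at degenerate situations: common real zeros of $p$ and $q$ (i.e.\ real roots of $f$), the case $\deg p\neq\deg q$ (which alters the terminal angles and hence the total argument count), and the need to indent the integration contour around real roots. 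These are all handled by a standard perturbation or limiting argument reducing to the generic case.
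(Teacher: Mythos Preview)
The paper does not prove the Hermite--Biehler theorem; it merely states it and cites Rahman--Schmeisser and Wagner. Hence there is no in-paper argument to compare against, and your proposal has to be judged on its own.

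Your phase-function approach is a classical and sound route. The identity
\[
  \frac{d}{dx}\arg f(x)\;=\;\Im\!\left(\frac{f'(x)}{f(x)}\right)\;=\;\sum_{j}\frac{\Im\alpha_j}{|x-\alpha_j|^2}\;=\;\frac{W[q,p](x)}{p(x)^2+q(x)^2}
\]
is exactly the bridge between root location, monotonicity of the argument, and the Wronskian, and the forward direction (stable $\Rightarrow$ interlacing plus fixed Wronskian sign) goes through as you outline. For the converse, the argument principle on a large upper semicircle is the right tool; the degenerate issues you flag (real roots of $f$, $\deg p\neq\deg q$, contour indentations) are genuine but routine, and a perturbation argument handles them.

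One point to tighten: you assert that the computed sign is ``matching the sign condition in the definition of $\Im f\ll\Re f$''. With the paper's convention $W[q,p]=q'p-qp'\ge 0$ for $q\ll p$, your own formula actually yields $W[q,p]\le 0$ for stable $f$ (test $f(z)=z+i$: $p=z$, $q=1$, $W[q,p]=-1$). This reflects a well-known sign ambiguity in how different sources define the Wronskian or proper position; rather than gloss over it, you should state explicitly which sign you obtain and note that it corresponds to the opposite convention, since as written it does not literally match the paper's definitions.
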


Borcea and Br\"and\'{e}n gave the following multivariate generalization of the Hermite-Biehler Theorem. 
Here, two multivariate polynomials $f,g\in\R[\mathbf{z}]$ are called in 
\emph{proper position}, written $f\ll g$, if the univariate polynomials $f(\mathbf{x}+t\mathbf{e})$, $g(\mathbf{x}+t\mathbf{e})$ are in proper position 
for all $\mathbf{x}\in\R^n$, $\mathbf{e}\in\R_{\ge 0}^n \setminus \{0\}$.

\begin{thm}{\normalfont (\cite[Cor.~2.4]{borcea-braenden-2010}, see also \cite[Thm.~5.3]{braenden2007})}
A non-constant polynomial  $f \in\C[\mathbf{z}]$ is stable if and only if $\Im f  \ll \Re f$.
\end{thm}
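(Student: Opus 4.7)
The plan is to reduce the multivariate statement to the univariate Hermite--Biehler theorem by restricting $f$ to real affine lines; the reduction exactly mirrors the line-by-line definition of multivariate proper position.

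First I would establish the pivotal equivalence: $f$ is stable if and only if, for every base point $\mathbf{x}\in\R^n$ and every direction $\mathbf{e}\in\R_{>0}^n$, the univariate polynomial $g_{\mathbf{x},\mathbf{e}}(t):=f(\mathbf{x}+t\mathbf{e})\in\C[t]$ is univariately stable. The forward implication is immediate: a root $t_0$ of $g_{\mathbf{x},\mathbf{e}}$ with $\Im t_0>0$ yields the root $\mathbf{z}_0:=\mathbf{x}+t_0\mathbf{e}$ of $f$, whose imaginary part $(\Im t_0)\mathbf{e}$ lies in $\R_{>0}^n$, placing $\mathbf{z}_0$ in $\HH_\C^n$. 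Conversely, given any root $\mathbf{z}^*\in\HH_\C^n$ of $f$, the choice $\mathbf{x}:=\Re\mathbf{z}^*$ and $\mathbf{e}:=\Im\mathbf{z}^*\in\R_{>0}^n$ gives $g_{\mathbf{x},\mathbf{e}}(i)=f(\mathbf{z}^*)=0$, witnessing univariate non-stability.

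Second, for each such pair I would apply the univariate Hermite--Biehler theorem. Writing $f=\Re f+i\,\Im f$ with $\Re f,\Im f\in\R[\mathbf{z}]$ via the coefficient-wise decomposition, the restriction takes the form $g_{\mathbf{x},\mathbf{e}}(t)=A(t)+iB(t)$ with $A(t):=\Re f(\mathbf{x}+t\mathbf{e})$ and $B(t):=\Im f(\mathbf{x}+t\mathbf{e})$, both in $\R[t]$ by virtue of $\mathbf{x},\mathbf{e}\in\R^n$. The univariate theorem then gives $g_{\mathbf{x},\mathbf{e}}$ stable iff $B\ll A$. Running over all admissible $(\mathbf{x},\mathbf{e})$, this is precisely $\Im f\ll\Re f$ in the multivariate sense, modulo the passage from interior directions $\mathbf{e}\in\R_{>0}^n$ to all of $\R_{\ge 0}^n\setminus\{0\}$.

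The main technical obstacle I foresee is exactly this extension to boundary directions. For the implication ``$f$ stable $\Rightarrow\Im f\ll\Re f$'' it is routine: the set of real polynomial pairs in proper position is closed under coefficient-wise limits, so the interior-direction conditions propagate to boundary directions by continuity of the restriction in $\mathbf{e}$. For the converse one invokes a Hurwitz-type argument: any hypothetical root of $g_{\mathbf{x},\mathbf{e}_0}$ with positive imaginary part at a boundary direction $\mathbf{e}_0$ must, upon perturbing to a nearby $\mathbf{e}\in\R_{>0}^n$, persist as a root of $g_{\mathbf{x},\mathbf{e}}$ with positive imaginary part, contradicting the interior case. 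Some care is required in handling degenerate limits where $g_{\mathbf{x},\mathbf{e}}$ drops degree or becomes identically zero; these are standard in the stability literature and constitute the only nontrivial point beyond the univariate Hermite--Biehler theorem already quoted.
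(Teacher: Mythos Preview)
The paper does not give its own proof of this statement: it is quoted as background from \cite{borcea-braenden-2010} and \cite{braenden2007}, with no argument supplied. So there is nothing in the paper to compare your proposal against.

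Your approach is the standard one and is essentially correct: the multivariate definition of proper position is designed precisely so that the statement reduces line-by-line to the univariate Hermite--Biehler theorem, and you have identified the one genuine technicality, namely passing between the open orthant $\R_{>0}^n$ of directions (which is what stability controls) and the closed set $\R_{\ge 0}^n\setminus\{0\}$ (which is what the definition of $\ll$ demands). One small clean-up: the Hurwitz-type perturbation argument is only needed in the direction ``$f$ stable $\Rightarrow \Im f\ll\Re f$'' (to push the interlacing from interior to boundary directions via a limit), not in the converse. For ``$\Im f\ll\Re f\Rightarrow f$ stable'' you already have the proper-position condition on all interior directions, and that alone forces every $g_{\mathbf{x},\mathbf{e}}$ with $\mathbf{e}\in\R_{>0}^n$ to be univariately stable, which is exactly the characterization of stability you established at the outset; no boundary directions enter at all.
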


We call a polynomial $f \in \C[\mathbf{z}]$ \emph{multilinear}
if it has degree at most $1$ with respect to each variable.
Br\"and{\'e}n characterized stability of multilinear
polynomials with real coefficients.

\begin{thm}\cite[Theorem 5.6]{braenden2007} \label{th:braenden1}
Let $f\in\R[\mathbf{z}]$ be non-constant and multilinear. Then $f$ is stable if and only
if for all $1\leq j,k\leq n$ the function
\begin{equation*}
\Delta_{jk}(f) \ = \ \frac{\partial f}{\partial z_j}\frac{\partial f}{\partial z_k}  -\frac{\partial^2 f}{\partial z_j\partial z_k} \cdot f
\end{equation*}
is non-negative on $\R^n$.
\end{thm}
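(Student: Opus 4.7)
The plan is to reduce both implications to the bivariate case $n=2$ and to handle that case by a direct computation. A crucial preliminary observation is the algebraic simplification of $\Delta_{jk}$ for multilinear $f$: writing
\[
f \ = \ A(\mathbf{z}')\,z_jz_k \,+\, B(\mathbf{z}')\,z_j \,+\, C(\mathbf{z}')\,z_k \,+\, D(\mathbf{z}')
\]
with $\mathbf{z}' = (z_i)_{i\notin\{j,k\}}$, and using $\partial_j\partial_k f = A(\mathbf{z}')$, a direct expansion yields
\[
\Delta_{jk}(f) \ = \ B(\mathbf{z}')\,C(\mathbf{z}') \,-\, A(\mathbf{z}')\,D(\mathbf{z}'),
\]
which is a polynomial in $\mathbf{z}'$ only. (The case $j=k$ is automatic, since then $\partial_j^2 f = 0$ and $\Delta_{jj}(f) = (\partial_j f)^2 \geq 0$.)

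The case $n=2$ itself reduces to analysing $g(z_1,z_2) = \alpha z_1z_2 + \beta z_1 + \gamma z_2 + \delta$ with real coefficients. Solving $g=0$ for $z_2$ in terms of $z_1 = x + iy$ and rationalising the denominator gives
\[
\Im(z_2) \ = \ -\frac{(\beta\gamma - \alpha\delta)\,y}{|\alpha z_1 + \gamma|^2},
\]
with the degenerate cases $\alpha=0$ or $\alpha z_1+\gamma=0$ dispatched by inspection. Hence for $y>0$ every root satisfies $\Im(z_2)\leq 0$ if and only if $\beta\gamma-\alpha\delta\geq 0$, which settles the equivalence when $n=2$.

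For the ``$\Rightarrow$'' direction in general, I specialise every $z_i$ with $i\notin\{j,k\}$ to an arbitrary real value. This preserves stability by the specialisation rule recalled above and produces a stable bivariate multilinear real polynomial; the case $n=2$ then forces its constant $\beta\gamma-\alpha\delta$ to be non-negative, and this constant equals $\Delta_{jk}(f)$ evaluated at the chosen real point. Letting the point range over $\R^{n-2}$ yields $\Delta_{jk}(f)\geq 0$ on all of $\R^n$.

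For the converse I would argue by induction on $n$, using the decomposition $f = z_nP(\mathbf{z}') + Q(\mathbf{z}')$ with $P,Q \in \R[\mathbf{z}']$ multilinear. Setting $z_n$ to any real value specialises the hypotheses $\Delta_{jk}(f)\geq 0$ ($1\leq j,k<n$) to non-negative polynomials on $\R^{n-1}$, so by induction every real slice $f|_{z_n=a}$ is stable. The main obstacle is to rule out roots of $f$ whose $n$-th coordinate has \emph{strictly positive} imaginary part; for this I would invoke the multivariate Hermite--Biehler criterion recalled above, reducing stability of $f = z_nP + Q$ to showing $Q \ll P$ in the proper-position sense. The Wronskian non-negativity required by proper position---namely $W[P,Q]\geq 0$ along every direction in $\R_{\geq 0}^n\setminus\{\mathbf{0}\}$---should be extractable as a non-negative combination of the remaining hypotheses $\Delta_{jn}(f)\geq 0$ for $1\leq j<n$, and pinning down this combinatorial identification uniformly across all directions is where I expect the main technical work to lie.
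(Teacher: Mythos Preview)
The paper does not prove this theorem: it is quoted in the preliminaries as a result of Br\"and\'en (cited as \cite[Theorem~5.6]{braenden2007}) with no argument supplied. There is therefore no ``paper's own proof'' to compare your attempt against.

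On the merits of your attempt itself: the forward implication is correct and complete. Your reduction of $\Delta_{jk}(f)$ to $BC-AD$ for multilinear $f$ is right, real specialisation of the remaining $n-2$ variables preserves stability, and your $n=2$ computation of $\Im(z_2)$ is accurate and settles that base case.

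The converse, as you yourself flag, is only a sketch. Your instinct about the Wronskian is in fact exactly right and not hard to confirm: with $f=z_nP+Q$ one has $\Delta_{jn}(f)=P\,\partial_jQ-Q\,\partial_jP$ (independent of $z_n$), and differentiating along $\mathbf{x}+t\mathbf{e}$ gives
\[
W[Q,P]_{\mathbf{e}} \ = \ \sum_{j<n} e_j\bigl(P\,\partial_jQ-Q\,\partial_jP\bigr) \ = \ \sum_{j<n} e_j\,\Delta_{jn}(f)\ \ge\ 0
\]
for $\mathbf{e}\in\R_{\ge 0}^{n-1}$, so the ``non-negative combination'' you anticipate is simply the directional derivative itself. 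What remains genuinely missing is (i) the interlacing half of proper position, and (ii) the correct Hermite--Biehler-type statement linking proper position of $P,Q$ to stability of $z_nP+Q$ (the version recorded in the paper is about $\Im f\ll\Re f$, which is not literally the same assertion). Both gaps can be closed---for (i) one uses that, by your induction, $aP+Q$ is real stable for every real $a$, which forces the univariate restrictions of $P$ and $Q$ to interlace---but filling them in amounts to reproducing a good portion of Br\"and\'en's original argument, and the auxiliary facts needed are not available from the present paper alone.
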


Hence, a non-zero bivariate polynomial $f(z_1,z_2)=\alpha z_1z_2+\beta z_1+\gamma z_2+\delta$ with $\alpha, \beta, \gamma, \delta\in\R$ is real stable if and only if 
$\beta\gamma-\alpha\delta\geq0$.

The non-multilinear case can be reduced to the multilinear case via the \emph{polarization} $\mathcal{P}(f)$ of a multivariate polynomial $f$, see \cite{braenden2007}. Denoting by $d_j$ the degree of $f$ in
the variable $z_j$, $\mathcal{P}(f)$ is the unique polynomial in the 
variables $z_{jk}$, $1\leq j\leq n$, $1\leq k\leq d_j$ with the properties
\begin{enumerate}
\item $\mathcal{P}(f)$ is multilinear,
\item $\mathcal{P}(f)$ is symmetric in the variables $z_{j1},\ldots,z_{jd_j}$, $1\leq j\leq n$,
\item if we apply the substitutions $z_{jk}=z_j$ for all $j,k$, then $\mathcal{P}(f)$ coincides with $f$.
\end{enumerate}
By the Grace-Walsh-Szeg\"{o} Theorem, 
$\mathcal{P}(f)$ is stable if and only if $f$ is stable; see, e.g., \cite[Cor.\ 5.9]{braenden2007}.

\medskip

By Theorem~\ref{th:braenden1}, deciding whether a multilinear polynomial $f$ is 
stable is equivalent to deciding whether $\Delta_{jk}(f)\geq0$ on $\R^n$ for all $j,k$. In~\cite{kpv-2012},
sum of squares-relaxations are considered to decide this question.

\subsection{Amoebas and coamoebas\label{se:amoebascoamoebas}}

The theory of amoebas builds upon algebraic varieties in the complex torus $(\C^\ast)^n = (\C\setminus\{0\})^n$.
For a Laurent polynomial $f \in \C[z_1^{\pm 1}, \ldots, z_n^{\pm 1}]$, define the
\emph{semialgebraic amoeba} $\mathcal{S}(f)$
(also known as \emph{unlog amoeba}) by
\begin{equation}
	\mathcal{S}(f) \ = \ \{|\mathbf{z}|=(|z_1|,\ldots,|z_n|)\in\R^n:\mathbf{z}\in
         \mathcal{V}(f) \cap (\C^*)^n\},
\end{equation}
and the \emph{amoeba} $\mathcal{A}(f)$ by
\begin{equation}
	\mathcal{A}(f) \ = \ \{\log|\mathbf{z}|=(\log|z_1|,\ldots,\log|z_n|)\in\R^n:\mathbf{z}\in\mathcal{V}(f)\cap(\C^\ast)^n\} \, .
\end{equation}
 
Amoebas were first 
introduced and studied by Gelfand, Kapranov and Zelevinsky in 
\cite{gelfand-1994}. Similarly, the \emph{coamoeba} of $f$ is defined as
\begin{equation}
	co\mathcal{A}(f) \ = \ \{\arg(\mathbf{z})=(\arg(z_1),\ldots,\arg(z_n)):\mathbf{z}\in\mathcal{V}(f)\cap(\C^\ast)^n\} \ \subseteq \ \T^n \, ,
\end{equation}
where $\arg$ denotes the argument of a complex number and
$\T^n =\left(\R/2\pi\Z\right)^n$.

If $\log_{\C}$ is the complex logarithm, then we have the relations
\[
  \mathcal{A}(f) = \Re\circ\log_{\C}\mathcal{V}(f) \quad \text{ and } \quad
  co\mathcal{A}(f) =\Im\circ\log_{\C}\mathcal{V}(f) \, ,
\]
where all maps are understood component-wise. See Figure \ref{Fig:AmoebaCoamoeba} for an example of an amoeba and a coamoeba.

We recall some basic statements about amoebas, see \cite{fpt-2000,gelfand-1994,genus1}.
For a Laurent polynomial $f \in \C[\mathbf{z}]$,
the amoeba $\mathcal{A}(f)$ is a closed set. The complement
of $\mathcal{A}(f)$
consists of finitely many convex regions, and
these regions are in bijective correspondence with the different
Laurent series expansions of the rational function $1/f$.
The number of components in the complement of an amoeba is bounded from above by the 
number of lattice points in the Newton polytope of $f$ and bounded from below by the number of vertices of the Newton polytope of $f$.

\begin{figure}[ht]
\ifpictures
\includegraphics[height=0.26\linewidth]{./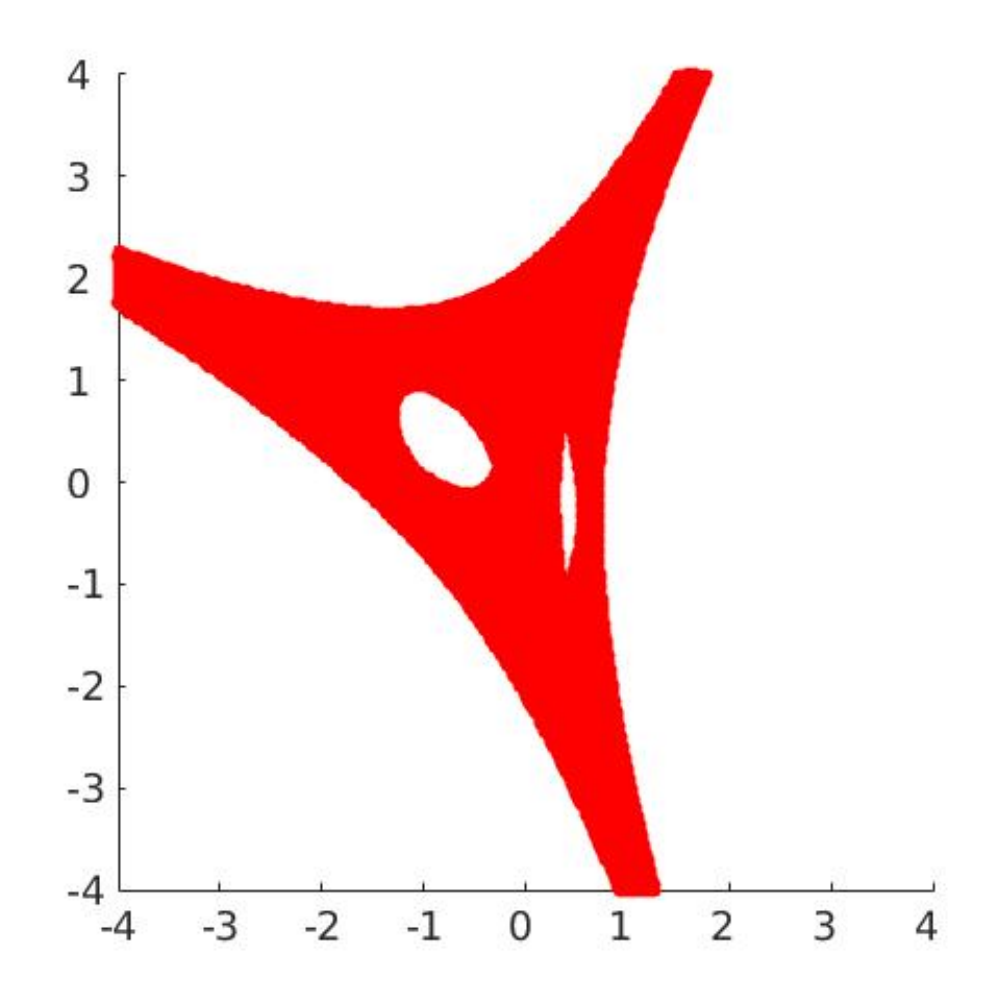} \quad
\includegraphics[height=0.26\linewidth]{./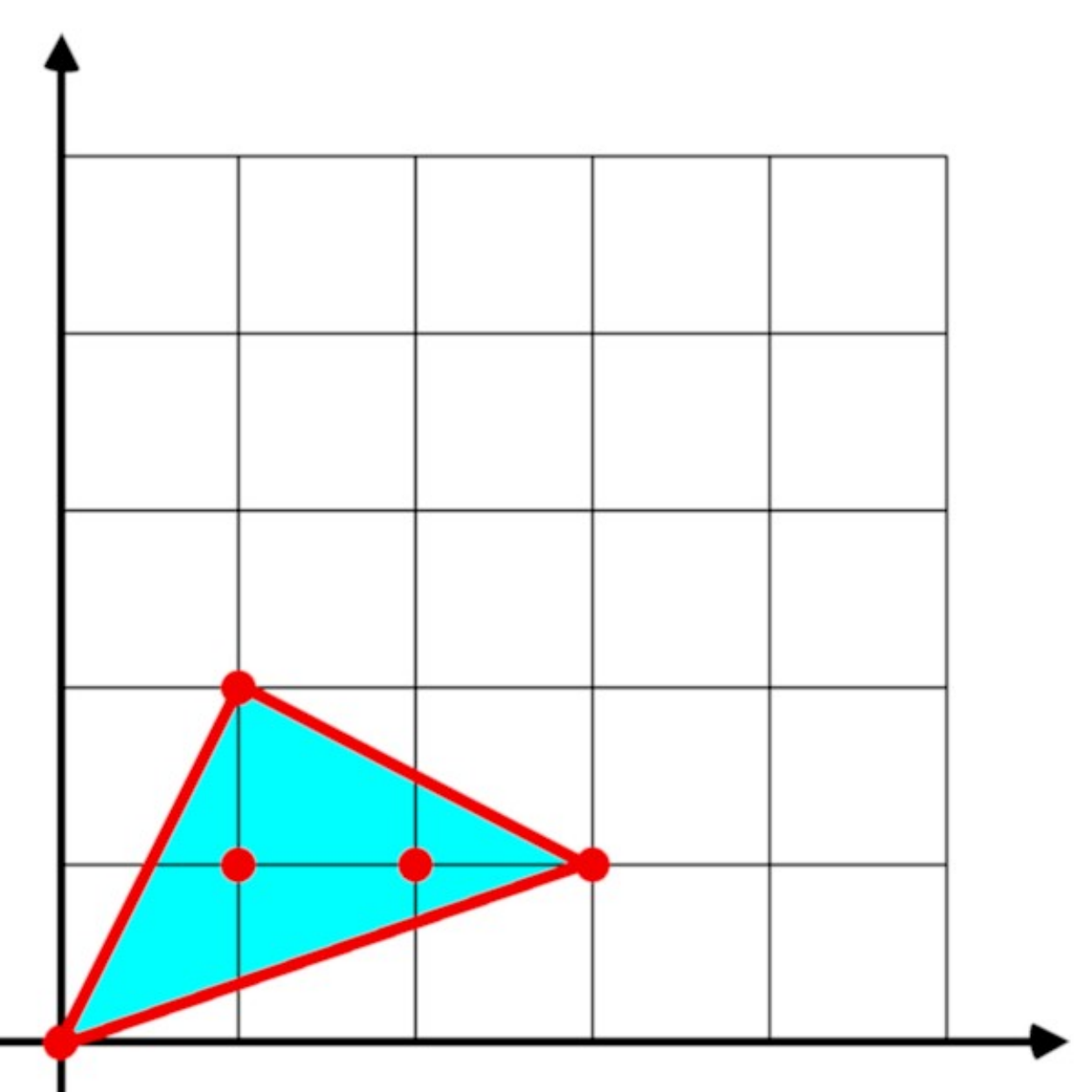}
\includegraphics[height=0.26\linewidth]{./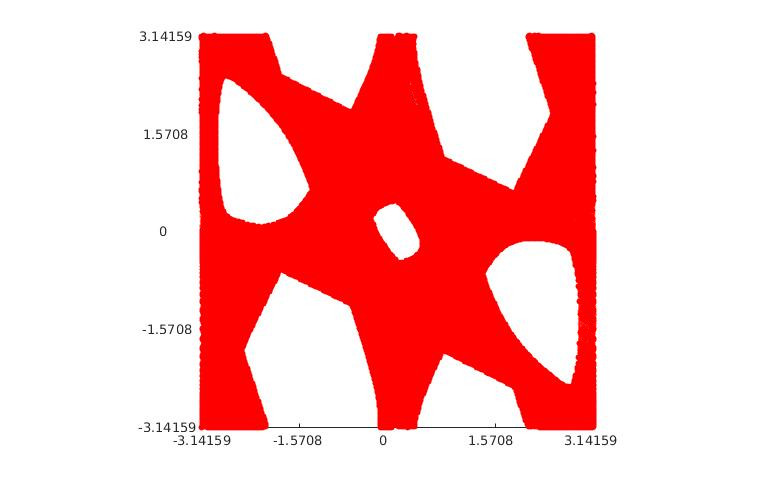}

\fi
\caption{An approximation of the amoeba and the coamoeba of the Laurent polynomial $f(z_1,z_2) = 2z_1^3z_2 + z_1z_2^2 - 4z_1z_2 - 2.5 \cdot e^{0.7 \cdot \pi \cdot i} z_1^2z_2 + 1$ together with its corresponding Newton polytope.}
\label{Fig:AmoebaCoamoeba}
\end{figure}

For coamoebas, it has been conjectured that
the complement of the closure of $co\mathcal{A}(f)$ contains
at most $n!\vol\New(f)$ connected components, 
where $\vol$ denotes the volume, see \cite{forsgard-diss}
for more background as well as a proof for the special case
$n=2$.
One can also consider amoebas and coamoebas of arbitrary varieties
rather than of hypersurfaces alone, see, e.g., \cite{theobald-de-wolff-approximating}.

\section{The structure of the imaginary projection of polynomials\label{se:structure}}

We investigate the structure of the imaginary projection of multivariate 
polynomials. 
	Writing $z_j = x_j + i y_j$ with real variables $x_j$, $y_j$, we see that
	$\mathcal{I}(f)$ is the projection 
	\begin{equation}
        \label{eq:improj}
	\R^{2n} \to \R^n \, , \quad (x_1, y_1, \ldots, x_n, y_n) \mapsto (y_1, \ldots, y_n)
	\end{equation}
	of a real algebraic variety, and thus $\mathcal{I}(f)$ is a semialgebraic set. Since the map~\eqref{eq:improj} is continuous, 
the imaginary projection of an irreducible polynomial $f$ is connected. See 
Figure~\ref{fi:ellipse} for an example.

\begin{figure}[ht]
	\includegraphics[width=0.26\linewidth]{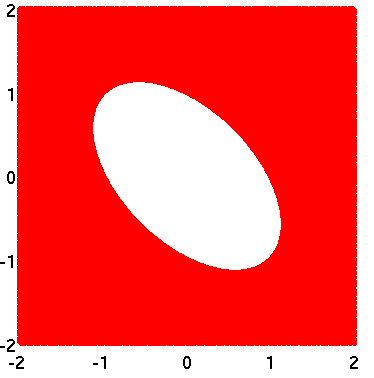}
	\caption{The imaginary projection of $f(z_1,z_2)=z_1^2 + z_2^2 +z_1z_2 + z_1 + z_2 + 1$, intersected with $[-2,2]^2$.}
      \label{fi:ellipse}
\end{figure}

The following fact allows to reduce the case of reducible polynomials to the case of
irreducible polynomials.

\begin{lemma}\label{lemma:imaginary-projection-product}
  For $f_1,f_2 \in \C[\mathbf{z}]$, we have
  $\mathcal{I}(f_1\cdot f_2)=\mathcal{I}(f_1)\cup\mathcal{I}(f_2)$.
\end{lemma}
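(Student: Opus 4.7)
The plan is to observe that the complex variety of a product factors as a union, and then to combine this with the elementary fact that set-images commute with unions. Both ingredients are entirely routine, so I expect no genuine obstacle here; the statement is a direct consequence of the definition of $\mathcal{I}(f)$.

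First I would record that because $\C$ is an integral domain, for every $\mathbf{z}\in\C^n$ one has $f_1(\mathbf{z})f_2(\mathbf{z})=0$ if and only if $f_1(\mathbf{z})=0$ or $f_2(\mathbf{z})=0$. Hence
\[
\mathcal{V}(f_1\cdot f_2)\ =\ \mathcal{V}(f_1)\cup\mathcal{V}(f_2).
\]

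Second, I would apply the componentwise imaginary-part map $\Im\colon\C^n\to\R^n$ and use the standard set-theoretic identity $\Im(A\cup B)=\Im(A)\cup\Im(B)$, valid for any function and any pair of sets. Combining this with the factorization of the variety gives
\[
\mathcal{I}(f_1\cdot f_2)\ =\ \Im\bigl(\mathcal{V}(f_1\cdot f_2)\bigr)\ =\ \Im\bigl(\mathcal{V}(f_1)\cup\mathcal{V}(f_2)\bigr)\ =\ \Im(\mathcal{V}(f_1))\cup\Im(\mathcal{V}(f_2))\ =\ \mathcal{I}(f_1)\cup\mathcal{I}(f_2),
\]
which is exactly the claim. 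Both inclusions are already witnessed by the same chain of equalities, so no separate argument for $\subseteq$ and $\supseteq$ is needed.
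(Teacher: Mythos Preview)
Your proof is correct and follows exactly the same chain of equalities as the paper's own argument: factor the variety as a union, then use that $\Im$ commutes with unions. The only difference is that you spell out the justifications (integral domain, images preserve unions) a bit more explicitly than the paper does.
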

\begin{proof} By definition of the imaginary projection we have
	\[
	\mathcal{I}(f_1\cdot f_2) \, = \, \Im\mathcal{V}(f_1\cdot f_2) \, = \, \Im(\mathcal{V}(f_1)\cup\mathcal{V}(f_2)) \, = \, \Im\mathcal{V}(f_1)\cup\Im\mathcal{V}(f_2) \, = \, \mathcal{I}(f_1)\cup\mathcal{I}(f_2).
	\]
\end{proof}

The imaginary projections of affine hyperplanes can be characterized as follows.

\begin{thm}\label{thm:linear-case-imaginary-projection}
For every affine-linear polynomial $f = a_0 + \sum_{j=1}^n a_j z_j \in \C[\mathbf{z}]$
with $(a_1, \ldots, a_n) \neq {\bf 0}$ the following statements hold.
\begin{enumerate}
 \item $\mathcal{I}(f) = \begin{cases}
  \mathcal{V}_{\R}\left( \Im(a_0 e^{-i \varphi} \right) + \sum\limits_{j=1}^n a_j e^{-i\varphi} y_j) \, , & \text{if } a_0 \in \C \text{ and } (a_1, \ldots, a_n) = \mathbf{b} \cdot e^{i \varphi} \\
  & \quad \text{ with some } \mathbf{b} \in \R^n 
  \text{ and } \varphi \in [0, 2 \pi) \, . \\
  \R^n \, , & \text{otherwise.\raisebox{4ex}{\hspace*{0.1cm}}}
\end{cases}$ \smallskip

 \item If all coefficients of $f$ are real, then $f$ is stable if and only if
  $a_1, \ldots, a_n \ge 0$ or $a_1, \ldots, a_n \le 0$.
\end{enumerate}
\end{thm}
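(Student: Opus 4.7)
The plan is to realify $f(\mathbf{z})=0$ into two real linear equations and then project out the real parts $\mathbf{x}$. Writing $z_j=x_j+iy_j$ and $a_j=\alpha_j+i\beta_j$, the condition $f(\mathbf{z})=0$ decouples into
\begin{align*}
\alpha_0 + \sum_{j=1}^n \alpha_j x_j &= \sum_{j=1}^n \beta_j y_j, \\
\beta_0 + \sum_{j=1}^n \beta_j x_j &= -\sum_{j=1}^n \alpha_j y_j,
\end{align*}
so $\mathcal{I}(f)$ is precisely the set of $\mathbf{y}\in\R^n$ for which this $2\times n$ system in $\mathbf{x}$ is consistent. Which $\mathbf{y}$ qualify depends only on the rank of the matrix with rows $(\alpha_1,\ldots,\alpha_n)$ and $(\beta_1,\ldots,\beta_n)$.

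For part (1), I would distinguish two cases. If this matrix has rank $2$, the system is solvable for every $\mathbf{y}\in\R^n$, giving $\mathcal{I}(f)=\R^n$. If the rank is at most $1$, then $(\alpha_j)_j$ and $(\beta_j)_j$ are proportional as real vectors, which is exactly the condition that all $a_j$ lie on a common real line $e^{i\varphi}\R$ in $\C$. Multiplying $f$ by $e^{-i\varphi}$ preserves $\mathcal{V}(f)$ and turns every $a_j e^{-i\varphi}$ into a real number; the imaginary part of the rotated equation then depends only on $\mathbf{y}$ and reads $\Im(a_0 e^{-i\varphi})+\sum_j a_j e^{-i\varphi}\,y_j=0$, while the real part is an affine-linear equation purely in $\mathbf{x}$. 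Since $(a_1,\ldots,a_n)\neq 0$, at least one coefficient of this real-part equation is nonzero, so it is always solvable in $\mathbf{x}$. Hence $\mathcal{I}(f)$ coincides with the real hyperplane cut out by the imaginary part, as claimed.

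For part (2), real coefficients correspond to the sub-case $\varphi=0$ of part (1), so
\[
\mathcal{I}(f) \ = \ \Bigl\{\mathbf{y}\in\R^n : \sum_{j=1}^n a_j y_j = 0\Bigr\}.
\]
From the introduction, $f$ is stable if and only if $\mathcal{I}(f)\cap\R_{>0}^n=\emptyset$. A real linear form with not-all-zero coefficients has its zero set disjoint from the open positive orthant precisely when its coefficients share a common weak sign: if all $a_j\ge 0$ (resp.\ all $\le 0$), then $\sum_j a_j y_j>0$ (resp.\ $<0$) on $\R_{>0}^n$; conversely, if some $a_j>0$ and some $a_k<0$, rescaling produces a strictly positive vector in the kernel. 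This yields the stated characterization.

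The only subtle point is, in the rank-at-most-$1$ branch of part (1), verifying that every $\mathbf{y}$ on the imaginary-part hyperplane actually lifts to a solution, so that $\mathcal{I}(f)$ equals the hyperplane rather than merely being contained in it; this is where the hypothesis $(a_1,\ldots,a_n)\neq 0$ is used.
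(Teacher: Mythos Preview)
Your proof is correct and follows essentially the same approach as the paper: realify, split according to the rank of the $2\times n$ coefficient matrix, rotate by $e^{-i\varphi}$ in the rank-one case to reduce to real coefficients, and for part~(2) read off when the resulting hyperplane meets the open positive orthant. The paper makes the rank-two step slightly more concrete by exhibiting an invertible $2\times 2$ submatrix and solving for two of the $x_j$, but this is the same argument you give.
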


Note that by statement (1), an affine-linear polynomial $f$ cannot be stable if 
$(a_1, \ldots, a_n) \not \in e^{i \varphi} \cdot \R^n$, and thus statement (2) provides a
complete classification for the stability of an affine-linear polynomial.

\begin{proof}
If all coefficients $a_1, \ldots, a_n$ are real, then 
\begin{eqnarray*}
  \mathcal{I}(f) & = & \left\{ \mathbf{y} \in \R^n \, : \, \exists \mathbf{x} \in \R^n \; \Re(a_0) + \sum_{j=1}^n a_j x_j = 0 \text{ and }
  \Im(a_0) + \sum_{j=1}^n a_j y_j = 0 \right\} \\
  & = &  \left\{ \mathbf{y} \in \R^n \, : \, \Im(a_0) + \sum_{j=1}^n a_j y_j = 0 \right\} \, ,
\end{eqnarray*}
and in the situation $(a_1, \ldots, a_n) \in e^{i \varphi} \cdot \R^n$, apply the real case
to $e^{-i \varphi} f$.

Now assume that $(a_1, \ldots, a_n)$ is not a complex multiple of a real vector.
That is, the real matrix  
$\begin{pmatrix} \Re(a_1) & \cdots & \Re(a_n) \\ \Im(a_1) & \cdots & \Im(a_ n)\end{pmatrix}$
has rank~2. By possibly changing the order of the coefficients $a_j$, we can assume 
that the matrix
$A = \begin{pmatrix} \Re(a_1) & \Re(a_2) \\ \Im(a_1) & \Im(a_2) \end{pmatrix}$
is invertible. In order to show $\mathcal{I}(f) = \R^n$, 
consider a fixed $\mathbf{y} \in \R^n$ and choose arbitrary 
$x_3, \ldots, x_n \in \R$. Then the conditions 
$\Re f(\mathbf{x}+i\mathbf{y}) = 0$ and 
$\Im f(\mathbf{x}+i\mathbf{y}) = 0$ yield a system of two
real linear equations in $x_1, x_2$ with coefficient matrix $A$,
\begin{eqnarray*}
0 & = & \Re f(\mathbf{x}+i\mathbf{y}) \ = \ \Re a_0 + \sum_{j=1}^n \Re(a_j) x_j -
  \sum_{j=1}^n \Im(a_j) y_j \, , \\
0 & = & \Im f(\mathbf{x}+i\mathbf{y}) \ = \ \Im a_0 + \sum_{j=1}^n \Im(a_j) x_j +
  \sum_{j=1}^n \Re(a_j) y_j \, .
\end{eqnarray*}
Since $A$ is invertible and $x_3, \ldots, x_n$ are fixed, there 
exists a solution $x_1,x_2 \in \R$, and thus $\mathbf{y} \in \mathcal{I}(f)$. 
This completes the proof of (1). 

Now let all coefficients of $f$ be real. By part~(1), $f$ has a zero with $\Im(z_j) > 0$ for
all $j$ if and only if there exists at least one positive coefficient and one negative coefficient.
\end{proof}

\begin{cor} Let $n \ge 2$ and $f = a_0 + \sum_{j=1}^n a_j z_j$ be a stable affine-linear polynomial. 
Then there exists a 
 (complex) $\varepsilon$-perturbation
 of the coefficients such that the resulting polynomial is not stable. 
 If all coefficients $a_0, \ldots, a_n$ are real and non-zero then
 for any real infinitesimal perturbations the stability of $f$ is preserved.
\end{cor}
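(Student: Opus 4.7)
My plan is to derive both statements directly from Theorem~\ref{thm:linear-case-imaginary-projection}, which already characterizes the imaginary projection of an affine-linear polynomial and, in the real-coefficient case, its stability.

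For the first statement, stability of $f$ forces $\mathcal{I}(f) \neq \R^n$, so by part~(1) of Theorem~\ref{thm:linear-case-imaginary-projection} there exists some $\varphi \in [0,2\pi)$ with $(a_1,\ldots,a_n) \in e^{i\varphi}\R^n$. After a permutation of variables I may assume $a_1 \neq 0$, and the hypothesis $n \geq 2$ supplies a second coefficient $a_2$ on which to act. I would replace $a_2$ by $a_2' = a_2 + \delta \cdot i\, a_1/|a_1|$ for some $0 < \delta \leq \varepsilon$; a short computation with the $2\times 2$ real matrix appearing in the proof of Theorem~\ref{thm:linear-case-imaginary-projection} shows that this rotates $a_2$ out of the real line $e^{i\varphi}\R$, so the new coefficient vector is no longer a complex multiple of any real vector. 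Part~(1) of Theorem~\ref{thm:linear-case-imaginary-projection} then yields $\mathcal{I}(f_{\mathrm{new}}) = \R^n$, which in particular meets $\R_{>0}^n$, so $f_{\mathrm{new}}$ is not stable.

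For the second statement, Theorem~\ref{thm:linear-case-imaginary-projection}(2) says that a real-coefficient affine-linear polynomial is stable if and only if $a_1,\ldots,a_n$ all have the same (weak) sign. Since every $a_j$ is assumed nonzero, the sign of each coefficient is strict, and any sufficiently small real perturbation preserves the sign of each $a_j$. Hence the sign condition persists, and Theorem~\ref{thm:linear-case-imaginary-projection}(2) again guarantees stability of the perturbed polynomial.

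I do not expect any real obstacle here: the structural work has already been done in Theorem~\ref{thm:linear-case-imaginary-projection}, and what remains is to translate its characterizations into perturbation statements. The only point requiring a bit of care is exhibiting, in part~(a), a concrete $\varepsilon$-perturbation that provably destroys the $\R$-colinearity of $(a_1,\ldots,a_n)$ while staying inside the $\varepsilon$-ball, which the explicit rotation of $a_2$ above accomplishes.
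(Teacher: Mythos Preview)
Your proposal is correct. The paper states this corollary without proof, leaving it as an immediate consequence of Theorem~\ref{thm:linear-case-imaginary-projection}; your argument supplies precisely the details one would fill in and proceeds along the expected route.
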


\begin{figure}[ht]
  \[
      \includegraphics[width=4cm]{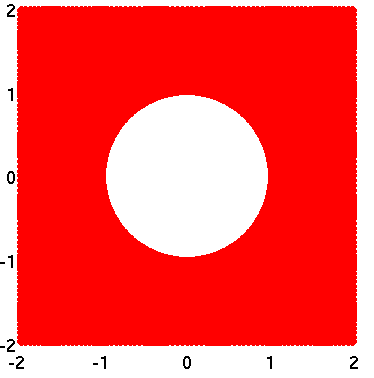} \qquad \qquad
      \includegraphics[width=4cm]{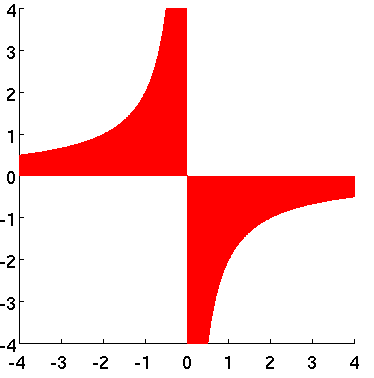}
  \]
  \caption{The imaginary projections of 
    $f(z_1,z_2)=z_1^2 + z_2^2 + 1$ and $f(z_1,z_2)=z_1z_2+z_1+z_2-1$.}
  \label{fi:top1}
\end{figure}

The set $\mathcal{I}(f)$ is not always a closed set. Indeed, already in the quadratic setting
all the following cases can occur.
\begin{enumerate}
	\item $\mathcal{I}(f)$ is open for $f(z_1,z_2)=z_1^2+z_2^2-1$. In fact, $\mathcal{I}(f) = \R^n$.
	\item $\mathcal{I}(f)$ is closed for $f(z_1,z_2)=z_1^2+z_2^2+1$.
	\item $\mathcal{I}(f)$ is neither open nor closed for $f(z_1,z_2)=z_1z_2+z_1+z_2-1$.
	The hyperbolic curve belongs to $\mathcal{I}(f)$, but, except the origin, the axes do not
      belong to $\mathcal{I}(f)$.
\end{enumerate}
See Figure~\ref{fi:top1}, and for further details on the specific examples we refer to 
the discussion of quadratic polynomials in Section~\ref{se:quadratic}.

\begin{openproblem1}\label{thm:imaginary-projection-is-never-open}
	Let $f\in\C[\mathbf{z}]$ be a polynomial. Is $\mathcal{I}(f)$ open if and only if $\mathcal{I}(f)=\R^n$? Clearly, the if-direction is obvious.
\end{openproblem1}

\begin{remark1} \label{re:symm}
If $f$ has real coefficients, then the zeros of $f$ come in
conjugated pairs. Therefore, $\mathcal{I}(f)$ is symmetric with respect 
to the origin.
\end{remark1}

\section{Components of the complement\label{sec:complcomponents}}

Similar to amoebas and coamoebas, the complement of an imaginary projection can have several 
connected components. In contrast to amoebas, already quadratic polynomials can lead
to bounded components in the complement. Indeed, the complement  of the imaginary projection of 
$f(z_1,z_2) = z_1^2 + z_2^2 + 1$ has a bounded component, see
Figure~\ref{fi:top1}. The existence of this bounded component of the complement is a consequence of
$\Re(f(x_1+iy_1,x_2+iy_2))=x_1^2-y_1^2+x_2^2-y_2^2+1$. If $y_1^2+y_2^2<1$, then
there cannot be any $x_1,x_2\in\R$ with $\Re(f)=0$. Also note that the origin is contained in the complement of an imaginary projection $\mathcal{I}(f)$ 
whenever $f$ has a real solution.

Set $A^{\compl} = \R^n \setminus A$ for the complement of a set $A \subset \R^n$,
and write $\overline{A}$ for the closure of~$A$.
As pointed out in Section~\ref{se:amoebascoamoebas}, it is an important property of amoebas and coamoebas that the 
components of $\mathcal{A}(f)$ and of $\overline{co\mathcal{A}(f)}$ are convex.
As a key property of imaginary projections, we show that the closure their 
complement consists of convex components as well.

\begin{thm} \label{th:convex}
For every polynomial $f\in\C[\mathbf{z}]$, all components of 
$\overline{\mathcal{I}(f)}^{\compl}$ are convex. The number of these convex components is finite.
\end{thm}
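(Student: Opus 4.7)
The plan is to prove the two assertions using classical tools: the Tarski--Seidenberg theorem for finiteness, and Bochner's tube theorem for convexity.

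For finiteness, I observe that $\mathcal{I}(f)$ is the image under the projection~\eqref{eq:improj} of the real algebraic variety $\mathcal{V}_\R(\Re f, \Im f) \subset \R^{2n}$. By the Tarski--Seidenberg theorem, $\mathcal{I}(f)$ is a semi-algebraic subset of $\R^n$, and hence so is its complement. Since every semi-algebraic set has only finitely many connected components, the second assertion of the theorem follows immediately.

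For convexity, the central ingredient is Bochner's tube theorem: if $U \subset \R^n$ is open and connected, then every holomorphic function on the tube $\R^n + iU$ extends holomorphically to $\R^n + i\conv(U)$. Let $C$ be a connected component of $\R^n \setminus \mathcal{I}(f)$ and choose an open connected subset $U$ of the interior $\inter(C) \subset \R^n$. Because $U \cap \mathcal{I}(f) = \emptyset$, the polynomial $f$ is zero-free on the open tube $T_U := \R^n + iU$, so $1/f$ is holomorphic on $T_U$. Bochner's theorem yields a holomorphic extension $g$ on $T_{\conv U}$. The identity $g \cdot f = 1$ is valid on $T_U$, and by analytic continuation on the connected tube $T_{\conv U}$ it persists throughout. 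Hence $f$ has no zeros on $T_{\conv U}$, i.e., $\conv(U) \cap \mathcal{I}(f) = \emptyset$. Since $\conv(U)$ is connected and contains $U \subset C$, we obtain $\conv(U) \subset C$. Exhausting $\inter(C)$ by such $U$ shows that every connected component of $\inter(C)$ is convex.

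The final step is to upgrade this to convexity of $C$ itself, and this is where I expect the main technical obstacle to lie. Because $\mathcal{I}(f)$ need not be closed (see the examples in Section~\ref{se:structure}), $C$ may contain limit points from $\overline{\mathcal{I}(f)} \setminus \mathcal{I}(f)$. However, that set is semi-algebraic of real dimension at most $n-1$, so it cannot disconnect a full-dimensional complement; from this I deduce that $\inter(C)$ is connected and dense in $C$. Then, given $\mathbf{y}_0, \mathbf{y}_1 \in C$, I approximate them by sequences in $\inter(C)$, use the convexity of $\inter(C)$ to place the approximating segments into $\conv(\inter(C)) \subset C$, and pass to the limit, invoking the maximality of the component $C$ inside $\R^n \setminus \mathcal{I}(f)$ to conclude that the limit segment $[\mathbf{y}_0, \mathbf{y}_1]$ remains in $C$.
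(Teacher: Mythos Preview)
Your approach mirrors the paper's: Bochner's tube theorem for convexity and semialgebraicity for finiteness. The paper applies Bochner directly to the tube over $C$ (after rotating by $e^{-i\pi/2}$ to put it in the standard form $C+i\R^n$) and invokes maximality of $C$ among connected bases on which $1/(f\circ\psi)$ is holomorphic, without explicitly addressing whether $C$ is open.

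You are more explicit about the openness issue, but your final step has a genuine gap. Knowing that $\inter(C)$ is convex and dense in $C$, you approximate $\mathbf{y}_0,\mathbf{y}_1\in C$ from the interior, place the approximating segments in $\inter(C)$, and then ``pass to the limit, invoking the maximality of the component $C$''. Maximality of $C$ as a connected component only says that no \emph{larger} connected subset of $\R^n\setminus\mathcal{I}(f)$ exists; it does not tell you that the limit segment lies in $\R^n\setminus\mathcal{I}(f)$ at all. Abstractly this step fails: the closed unit square minus a single non-extreme boundary point has convex dense interior but is itself not convex. What actually closes the gap is a Hurwitz-type argument: for fixed $\mathbf{x}\in\R^n$, the one-variable polynomials
\[
\zeta\ \longmapsto\ f\bigl(\mathbf{x}+i\mathbf{y}_0^{(k)}+\zeta\,i(\mathbf{y}_1^{(k)}-\mathbf{y}_0^{(k)})\bigr)
\]
are zero-free on the strip $\{0\le\Re\zeta\le 1\}$ and converge locally uniformly to $\zeta\mapsto f\bigl(\mathbf{x}+i\mathbf{y}_0+\zeta\,i(\mathbf{y}_1-\mathbf{y}_0)\bigr)$; the limit is not identically zero (it is nonzero at $\zeta=0$ because $\mathbf{y}_0\notin\mathcal{I}(f)$), hence zero-free on the strip, which gives $\mathbf{y}_t\notin\mathcal{I}(f)$ for all $t\in[0,1]$. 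You should also justify, rather than assert, that $\inter(C)$ is nonempty and connected; the claim that a codimension-one semialgebraic set ``cannot disconnect'' $C$ is not automatic.
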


\begin{proof}
	Let $C$ be a component of the complement of $\overline{\mathcal{I}(f)}$. Define the holomorphic map
	\begin{equation*}	
	\psi:\C^n\rightarrow\C^n, \quad \mathbf{z}\mapsto \mathbf{z}\cdot e^{-i\frac{\pi}{2}},	
	\end{equation*} 
	which is equivalent to $\textbf{x}+i\textbf{y}\mapsto \textbf{y}-i\textbf{x}$. Furthermore, let
		\begin{equation*}	
		C_\psi \ = \ \psi(\R^n+iC) \ = \ C-i\R^n \ = \ C+i\R^n.	
		\end{equation*}	
	
	We observe that $C_{\psi}$ is a tubular region, that is, for any 	
	$\textbf{y} \in C_{\psi} \cap \R^n$ we have $\textbf{y} + i\textbf{x} \in C_{\psi}$ for 	
	all $\textbf{x} \in \R^n$.	
	Moreover, the function 
	\begin{equation*}
	g:C_{\psi} \to \C , \quad \textbf{w} \mapsto \frac{1}{f(\psi(\textbf{w}))}
	\end{equation*} is holomorphic on $C_\psi$, and $C_\psi$ is the 
	maximal tube with this property. By Bochner's Tube Theorem \cite{bochner}, $g$ is holomorphic on the convex hull 
	of $C_\psi$ (considered as set in $\R^{2n} \cong \C^n$). 
	Due to the maximality of $C_\psi$,
	this implies the convexity of $C_\psi$. Since 	$C_\psi=\psi(\R^n+iC)=C+i\R^n$, we obtain the convexity of $C$.

	As $\mathcal{I}(f)$ is a semialgebraic set, 
the complement of $\mathcal{I}(f)$ is semialgebraic as well.
Then finiteness of the number of convex components follows from
the classical bounds of Oleinik-Petrovski or Milnor-Thom,
see, e.g., \cite[Chapter~7]{bpr-book}. 
\end{proof}	

Theorem~\ref{th:convex} implies the 
following statement on the unbounded components of the complement.
\begin{cor}\label{cor:ray-in-unbounded-complement}
	Every unbounded component of the complement of the closure of an
	imaginary projection contains a ray.
\end{cor}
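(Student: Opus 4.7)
By Theorem~\ref{th:convex} the component $C$ is convex, so the corollary reduces to the purely convex-geometric statement that every nonempty unbounded convex set in $\R^n$ contains a ray. The plan is to prove this directly, without invoking any further structure of $\mathcal{I}(f)$.

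First I would pick a base point $x_0$ in the relative interior of $C$, which is nonempty for any nonempty convex set, and fix $r > 0$ such that the open ball $B_L(x_0, r)$ taken inside $L := \aff(C)$ lies in $C$. Exploiting unboundedness, I choose a sequence $(p_n) \subseteq C$ with $\|p_n - x_0\| \to \infty$, set $v_n = (p_n - x_0)/\|p_n - x_0\|$, and pass to a subsequence converging to a unit vector $v$, which automatically lies in the linear direction space of $L$. The aim will then be to show that the entire half-line $\{x_0 + tv : t \geq 0\}$ is contained in $C$.

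The key computation is as follows: for each fixed $t \geq 0$ and every $n$ large enough that $\|p_n - x_0\| \geq t$, the point $x_0 + t v_n$ is a convex combination of $x_0$ and $p_n$ and hence lies in $C$; combining this with $B_L(x_0, r) \subseteq C$ and convexity of $C$ yields $B_L\bigl(x_0 + t v_n,\ (1 - t/\|p_n - x_0\|)\, r\bigr) \subseteq C$. Letting $n \to \infty$, the center tends to $x_0 + tv$ while the radius tends to $r$, so $x_0 + tv \in C$, and the ray is produced. The main obstacle is subtle: $\mathcal{I}(f)$ need not be closed, so the components of its complement need not be open in $\R^n$ (as already exhibited by $f = z_1z_2 + z_1 + z_2 - 1$), and a naive limit argument using $\R^n$-neighborhoods would fail at the final step. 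Passing to the relative interior of $C$ and working inside $\aff(C)$ is exactly the device that keeps the limit step valid regardless of whether $C$ is full-dimensional.
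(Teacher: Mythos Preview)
Your proof is correct and follows essentially the same approach as the paper: both reduce the statement via Theorem~\ref{th:convex} to the purely convex-geometric fact that an unbounded convex set contains a ray, and both handle the subtlety that $C$ need not be open or closed by passing to the relative interior. The only difference is that the paper invokes standard recession-cone results from Rockafellar (the recession cone of $\operatorname{relint} C$ agrees with that of $\operatorname{cl} C$ and is nontrivial), whereas you unpack this citation into a direct compactness-and-limit argument; your version is more self-contained but proves exactly the same intermediate fact.
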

\begin{proof}
    Let $C$ be an unbounded component of the complement of 
    $\overline{\mathcal{I}(f)}$. Then the convex set $C$ is at least one-dimensional.
    By well-known results in convex analysis (see \cite[Cor.\ II.8.3.1, Thm.\ II.8.4] {rockafellar-book}), 
    the relative interior of $C$ has a recession cone which coincides with the recession cone of the 
    closure of $C$, and that recession cone contains a non-zero vector. Hence, $C$ contains a ray.
\end{proof}
   
The left picture in Figure \ref{fig:boundedandunboundedcomplementsets} shows the imaginary projection of the polynomial $f(z_1,z_2)=z_1^2+z_1^2z_2+2z_1+z_2+1$ with its $6$ convex components
of the complement.
The imaginary projection of a non-constant polynomial is always unbounded, see Section~\ref{sec:infinity}.
As the two right pictures in 
Figure \ref{fig:boundedandunboundedcomplementsets} show, it is possible
that an imaginary projection contains both bounded and unbounded components in
the complement.

\begin{figure}[ht]
	\includegraphics[width=0.25\linewidth]{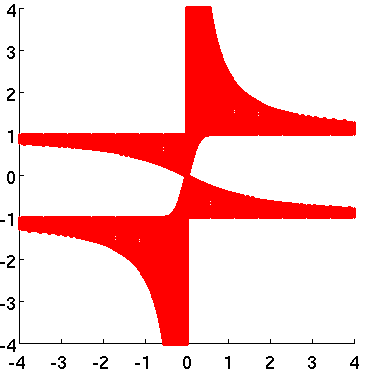}
	\centering
	\includegraphics[height=0.25\linewidth]{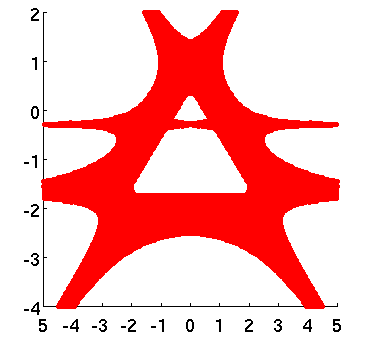} \qquad
\includegraphics[height=0.25\linewidth]{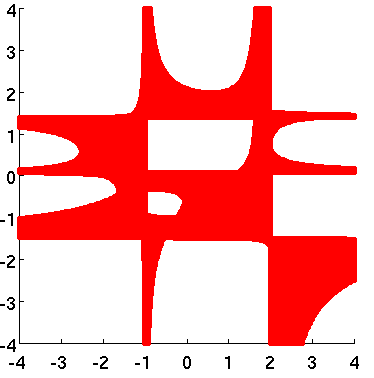}
	\caption{The imaginary projection of
 $f(z_1,z_2)=z_1^2+z_1^2z_2+2z_1+z_2+1$,
$f(z_1,z_2)=z_1^4 + i z_1^3 - z_1^2 z_2^2 + 3 z_1^2 - 2i z_1 z_2^2
+ (4-2i) z_1 + 0.5 z_2^2 + 1.5$
and of
$f(z_1,z_2)=z_1^3z_2^2-iz_1^3z_2+2z_1^3-1.8 z_1^2z_2-1.8iz_1^2+2.1z_1z_2^2+2.1iz_1z_2+4.2z_1-0.2iz_2^2+0.4z_2+1.6i$}
	\label{fig:boundedandunboundedcomplementsets}
\end{figure}

\begin{cor}\label{cor:imag-proj-with-given-number-of-bounded-complement-sets}
	For any integers $n > 0$ and $t>0$, there exists a polynomial 
$f \in \C[\mathbf{z}] = \C[z_1, \ldots, z_n]$ such that the complement of $\mathcal{I}(f)$ has exactly $t$ bounded components.
\end{cor}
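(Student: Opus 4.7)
The plan is to construct $f$ as a product of three types of real polynomials and then read off the imaginary projection via Lemma \ref{lemma:imaginary-projection-product}. The building block carrying the bounded feature is $g(\mathbf{z}) := z_1^2 + z_2^2 + \cdots + z_n^2 + 1$; the ``slicing'' factors are $z_1$ and $z_1^2 + c$ for $c \in (0,1)$.

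For $n \ge 2$ I would first verify, by a direct realification, that $\mathcal{I}(g) = \{\mathbf{y} \in \R^n : \|\mathbf{y}\|^2 \ge 1\}$: both parts of $g(\mathbf{x}+i\mathbf{y}) = \|\mathbf{x}\|^2 - \|\mathbf{y}\|^2 + 1 + 2i\,\mathbf{x}\cdot\mathbf{y}$ vanish iff $\|\mathbf{x}\|^2 = \|\mathbf{y}\|^2 - 1$ and $\mathbf{x}\perp\mathbf{y}$, and for $\|\mathbf{y}\| \ge 1$ such an $\mathbf{x}$ can always be chosen in the orthogonal complement of $\mathbf{y}$ (this is where $n \ge 2$ enters). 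Thus the complement of $\mathcal{I}(g)$ is the open unit ball, one bounded convex component. Similarly, $\mathcal{I}(z_1) = \{y_1 = 0\}$ and $\mathcal{I}(z_1^2 + c) = \{y_1 = \pm \sqrt{c}\}$, each a union of hyperplanes perpendicular to the $y_1$-axis in $\R^n$.

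Given $t \ge 1$, write $t - 1 = 2m + s$ with $s \in \{0,1\}$ and $m \ge 0$, pick distinct $c_1 < \cdots < c_m$ in $(0,1)$, and set
$$
f(\mathbf{z}) \ := \ z_1^{\,s} \prod_{k=1}^m (z_1^2 + c_k) \cdot \bigl( z_1^2 + z_2^2 + \cdots + z_n^2 + 1 \bigr) \ \in \ \R[\mathbf{z}].
$$
By Lemma \ref{lemma:imaginary-projection-product}, $\mathcal{I}(f)$ is the union of the closed complement of the open unit ball with $\ell := 2m + s = t-1$ distinct hyperplanes $\{y_1 = a_i\}$ satisfying $-1 < a_1 < \cdots < a_\ell < 1$. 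The complement of $\mathcal{I}(f)$ is therefore the open unit ball cut by $\ell$ parallel hyperplanes into $\ell + 1 = t$ nonempty open convex slabs, each bounded, giving precisely $t$ bounded complement components. The univariate case $n = 1$ is handled analogously with $f(z) = z^{\,s} \prod_k (z^2 + c_k)$, whose imaginary projection is a symmetric finite set producing $t$ bounded intervals after the same parity bookkeeping.

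The computation of $\mathcal{I}(g)$ is the only nontrivial calculation and is routine. The point to monitor is that no further bounded components appear outside the ball; this is clear because outside the closed unit ball the complement of $\mathcal{I}(g)$ is already unbounded, and the slicing hyperplanes are themselves unbounded subsets of $\mathcal{I}(f)$, so they can only subdivide the existing bounded component. The only genuine subtlety I expect is the parity choice $s \in \{0,1\}$, which is forced because the real factors $(z_1^2 + c_k)$ only contribute symmetric pairs $\pm \sqrt{c_k}$; the extra factor $z_1$ supplies the hyperplane $\{y_1 = 0\}$ whenever $t$ is even.
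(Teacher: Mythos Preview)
Your construction is correct, but it takes a different route from the paper. The paper's argument uses only \emph{linear} factors: by Theorem~\ref{thm:linear-case-imaginary-projection} every real affine hyperplane in $\R^n$ is the imaginary projection of a real affine-linear polynomial, so one simply chooses a hyperplane arrangement in $\R^n$ with exactly $t$ bounded cells and multiplies the corresponding linear forms together via Lemma~\ref{lemma:imaginary-projection-product}. Your version instead manufactures the initial bounded cell with the quadric $g$ and then slices it with parallel hyperplanes. This is perfectly valid, but it costs you the explicit computation of $\mathcal{I}(g)$ (which the paper only carries out later, in Section~\ref{se:quadratic}) and the parity gymnastics forced by the fact that $z_1^2+c$ contributes hyperplanes in symmetric pairs. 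The paper's approach avoids both. One small slip in your $n=1$ sketch: the imaginary projection of $z^{s}\prod_k(z^2+c_k)$ is a set of $2m+s$ points in $\R$, whose complement has $2m+s-1$ bounded intervals, not $2m+s+1$; so the correct bookkeeping there is $2m+s=t+1$ rather than $t-1$. This is a trivial fix and does not affect the argument.
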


\begin{proof}
	We choose an arrangement $\mathcal{H}$ of $d$ hyperplanes $H_1,\ldots,H_d\subseteq\R^n$ such that $\mathcal{H}$ has $t$ bounded components of the complement. 
Using Theorem~\ref{thm:linear-case-imaginary-projection},
each of the hyperplanes is the imaginary projection of an affine-linear polynomial $f_1,\ldots,f_d\in\C[\mathbf{z}]$.
By Lemma \ref{lemma:imaginary-projection-product}, the imaginary projection of the product of these polynomials gives 
exactly the hyperplane arrangement.
\end{proof}

The proof of Corollary \ref{cor:imag-proj-with-given-number-of-bounded-complement-sets} constructed $f$ as a product of linear polynomials. In the following, we investigate the imaginary projection of products of linear polynomials in more detail.

\begin{thm}\label{thm:number-complement-components-product-of-linear-polynomials}
	Let $f \in \C[\mathbf{z}]$ be a product of $m$ affine-linear polynomials in $n$ variables. Then the complement of $\mathcal{I}(f)$ consists of at most $\sum_{k=0}^n\binom{m}{k}$ components, and this bound is tight.
\end{thm}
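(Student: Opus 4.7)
My plan is to reduce the complement of $\mathcal{I}(f)$ to the complement of a real affine hyperplane arrangement and then apply the classical Schl\"afli--Zaslavsky region count. Writing $f = f_1 \cdots f_m$ with each $f_i$ affine-linear, Lemma~\ref{lemma:imaginary-projection-product} gives $\mathcal{I}(f) = \bigcup_{i=1}^m \mathcal{I}(f_i)$. Theorem~\ref{thm:linear-case-imaginary-projection}(1) then implies that each $\mathcal{I}(f_i)$ is either a real affine hyperplane $H_i \subseteq \R^n$ or all of $\R^n$. If some factor gives $\R^n$, then $\mathcal{I}(f) = \R^n$ and the complement is empty, trivially within the bound; otherwise, $\mathcal{I}(f) = \bigcup_{i=1}^m H_i$ is a union of at most $m$ real affine hyperplanes in $\R^n$.

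For the upper bound I invoke the classical fact that the complement of $m$ real affine hyperplanes in $\R^n$ has at most $\sum_{k=0}^n \binom{m}{k}$ connected components. A self-contained proof proceeds by induction on $m$: adding one hyperplane $H_m$ to an arrangement of $m-1$ hyperplanes increases the region count by exactly the number of regions that the induced arrangement on $H_m \cong \R^{n-1}$ has, which by the inductive hypothesis is at most $\sum_{k=0}^{n-1} \binom{m-1}{k}$; Pascal's identity $\binom{m-1}{k} + \binom{m-1}{k-1} = \binom{m}{k}$ then closes the induction.

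For tightness, the plan is to exhibit an $f$ whose $m$ factors produce hyperplanes in general position in $\R^n$. Using Theorem~\ref{thm:linear-case-imaginary-projection}(1), any prescribed real affine hyperplane can be realized as the imaginary projection of an affine-linear polynomial (in general with complex coefficients), and then one takes the product of $m$ such factors. The main obstacle is ensuring the product lies in $\R[\mathbf{z}]$: non-real factors must appear together with their complex conjugates, which forces the induced hyperplane arrangement to be invariant under $\mathbf{y} \mapsto -\mathbf{y}$. I handle this by choosing the $m$ target hyperplanes either to pass through the origin (so each is fixed by the central reflection and can be realized by a single real factor via Theorem~\ref{thm:linear-case-imaginary-projection}(1)) or by pairing them so that the resulting symmetric arrangement still attains the Schl\"afli--Zaslavsky bound, yielding the required $\sum_{k=0}^n \binom{m}{k}$ components.
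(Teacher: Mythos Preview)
Your reduction to a hyperplane arrangement and the upper bound via the Schl\"afli--Zaslavsky count match the paper's proof exactly; the paper simply cites \cite{stanley} for the region bound, whereas you supply the standard induction, which is a welcome addition but not a different approach.

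For tightness you correctly flag a subtlety the paper does not address: since $f\in\R[\mathbf{z}]$, the set $\mathcal{I}(f)$ is symmetric under $\mathbf{y}\mapsto-\mathbf{y}$ (Remark~\ref{re:symm}), so the induced hyperplane arrangement must be centrally symmetric. Your proposed fix, however, does not close the gap. A centrally symmetric arrangement of $m$ hyperplanes in $\R^n$ cannot in general attain $\sum_{k=0}^n\binom{m}{k}$ regions. Already for $n=2$, $m=3$ the bound is~$7$, but every centrally symmetric arrangement of three lines has at most~$6$ regions: if no line passes through the origin then each line $H$ satisfies $H\neq -H$ and the lines split into $(-1)$-pairs, impossible for odd $m$; if some line passes through the origin then the origin lies in no region, so $\mathbf{y}\mapsto-\mathbf{y}$ acts on the regions without fixed points and their number is even. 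Hence neither ``all through the origin'' nor any ``symmetric pairing'' can realize~$7$ regions here, and your tightness argument is incomplete as stated. The paper's own proof simply asserts that hyperplanes in general position achieve the bound and does not confront this realizability question; the tightness claim is unproblematic over $\C[\mathbf{z}]$, since by Theorem~\ref{thm:linear-case-imaginary-projection}(1) any affine hyperplane $\{b_0+\sum_j b_j y_j=0\}\subseteq\R^n$ equals $\mathcal{I}\big(ib_0+\sum_j b_j z_j\big)$, so an arbitrary arrangement in general position can be realized.
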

\begin{proof}By Theorem~\ref{thm:linear-case-imaginary-projection}, the imaginary projection
of an affine-linear polynomial is either a hyperplane or the whole space $\R^n$. We can assume here that the first case holds for every affine-linear polynomial. Then the imaginary projection of
the product defines a hyperplane arrangement in $\R^n$. If the hyperplanes are in general position, 
then they decompose the ambient space into exactly $\sum_{k=0}^n\binom{m}{k}$ many regions, see \cite[Proposition 2.4]{stanley}.
\end{proof}

Theorem~\ref{thm:number-complement-components-product-of-linear-polynomials}
implies the following lower bound for the maximal number of components of the complement of 
$\mathcal{I}(f)$ for polynomials $f$
of total degree $d$ in $n$ variables.

\begin{cor} \label{co:lowerbound}
There exists a polynomial $f$ of total degree $d$ in $n$ variables such that
  the complement of $\mathcal{I}(f)$ consists of exactly $\sum_{k=0}^n \binom{d}{k}$ 
  components.
\end{cor}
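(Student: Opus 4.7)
The plan is to derive the corollary immediately from Theorem~\ref{thm:number-complement-components-product-of-linear-polynomials} by constructing $f$ as a product of $d$ affine-linear factors in general position. Concretely, I would pick real affine-linear polynomials $f_1, \ldots, f_d \in \R[\mathbf{z}]$ such that the hyperplanes $H_j = \mathcal{I}(f_j) \subseteq \R^n$ form an arrangement in general position. By Theorem~\ref{thm:linear-case-imaginary-projection}(1), each $\mathcal{I}(f_j)$ is indeed a hyperplane (since all coefficients are real), and choosing the coefficients generically ensures the general-position assumption.

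Setting $f = f_1 \cdots f_d$, the total degree of $f$ is exactly $d$, and by Lemma~\ref{lemma:imaginary-projection-product} we have
\[
  \mathcal{I}(f) \ = \ \bigcup_{j=1}^d \mathcal{I}(f_j) \ = \ \bigcup_{j=1}^d H_j \, .
\]
Then Theorem~\ref{thm:number-complement-components-product-of-linear-polynomials}, applied with $m = d$, yields that the complement of $\mathcal{I}(f)$ has exactly $\sum_{k=0}^n \binom{d}{k}$ components, since the hyperplanes are in general position and this matches the known count of regions of a generic arrangement of $d$ hyperplanes in $\R^n$.

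There is no genuine obstacle here: the work has already been done in Theorem~\ref{thm:number-complement-components-product-of-linear-polynomials}, and the only thing to verify is the bookkeeping that $m = d$ linear factors produce total degree exactly $d$, together with the fact that generic choices of real coefficients simultaneously yield hyperplanes (rather than $\R^n$) in $\mathcal{I}(f_j)$ and put them into general position. Both conditions are open and non-empty, so a generic choice suffices.
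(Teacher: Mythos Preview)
Your proposal is correct and follows exactly the intended route: the corollary is stated in the paper as an immediate consequence of Theorem~\ref{thm:number-complement-components-product-of-linear-polynomials} (with $m = d$), and your argument spells out precisely the construction implicit in the tightness part of that theorem's proof.
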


For homogeneous polynomials, the components of the complement 
are always unbounded since the imaginary projection is a cone. Furthermore, we have the following relation to hyperbolic polynomials.

	\begin{thm}\label{thm:connection-complement-hyperbolicity-cone}
		Let $f\in\R[\mathbf{z}]$ be homogeneous. Then $\mathcal{I}(f) \neq \R^n$ if and only if $f$ is hyperbolic (with respect to some vector $\mathbf{e} \in \R^n)$.
	\end{thm}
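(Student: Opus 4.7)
The plan is to prove both directions by producing explicit witnesses, using any point of $\R^n\setminus\mathcal{I}(f)$ on one side and the hyperbolicity direction on the other.

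For the \emph{if} direction, I would assume that $f$ is hyperbolic in direction $\mathbf{e}\in\R^n$ and show that $\mathbf{e}\notin\mathcal{I}(f)$. If instead $\mathbf{e}=\Im(\mathbf{w})$ for some $\mathbf{w}\in\mathcal{V}(f)$, then writing $\mathbf{w}=\mathbf{x}+i\mathbf{e}$ with $\mathbf{x}=\Re(\mathbf{w})\in\R^n$ gives $f(\mathbf{x}+i\mathbf{e})=0$, so $t=i$ is a non-real root of the univariate polynomial $t\mapsto f(\mathbf{x}+t\mathbf{e})$, contradicting hyperbolicity. Hence $\mathcal{I}(f)\neq\R^n$.

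For the \emph{only if} direction, pick any $\mathbf{e}\in\R^n\setminus\mathcal{I}(f)$ and check both clauses of the definition of hyperbolicity in direction $\mathbf{e}$. The key preliminary observation is that $\mathcal{I}(f)$ is a \emph{double cone}: homogeneity of $f$ makes $\mathcal{V}(f)$ invariant under scaling by $\lambda>0$, so $\mathcal{I}(f)$ is invariant under positive real scaling; reality of the coefficients gives $\mathcal{V}(f)=\overline{\mathcal{V}(f)}$ and hence $\mathcal{I}(f)=-\mathcal{I}(f)$ (cf.\ Remark~\ref{re:symm}). From this I extract the non-degeneracy $f(\mathbf{e})\neq 0$: if $f(\mathbf{e})=0$, then $f(i\mathbf{e})=i^{\deg f}f(\mathbf{e})=0$, so $\mathbf{e}=\Im(i\mathbf{e})\in\mathcal{I}(f)$, contradicting the choice of $\mathbf{e}$. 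For the real-root clause, assume for contradiction that for some $\mathbf{x}\in\R^n$ the polynomial $g_{\mathbf{x}}(t):=f(\mathbf{x}+t\mathbf{e})$ has a root $t_0=a+bi$ with $b\neq 0$. Then
\[
b\mathbf{e} \ = \ \Im\bigl((\mathbf{x}+a\mathbf{e})+ib\mathbf{e}\bigr) \ \in \ \mathcal{I}(f),
\]
and the double-cone property forces $\mathbf{e}\in\mathcal{I}(f)$, again a contradiction. Therefore $g_{\mathbf{x}}$ has only real roots, and $f$ is hyperbolic with respect to $\mathbf{e}$.

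The only real subtlety is the double-cone reduction: one needs both homogeneity \emph{and} real coefficients to pass from $b\mathbf{e}\in\mathcal{I}(f)$ for some $b\neq 0$ to $\mathbf{e}\in\mathcal{I}(f)$. This is precisely the step where both hypotheses of the theorem are genuinely used; apart from it, the whole argument is a direct unwinding of the definitions.
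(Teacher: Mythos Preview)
Your proof is correct and follows essentially the same approach as the paper: both directions use the same witnesses ($\mathbf{e}$ in each case), and the argument that a non-real root $a+ib$ of $t\mapsto f(\mathbf{x}+t\mathbf{e})$ forces $\mathbf{e}\in\mathcal{I}(f)$ is identical. One small remark: your ``double-cone'' reduction is slightly roundabout, and your claim that real coefficients are genuinely needed there is not quite right---homogeneity alone already gives invariance of $\mathcal{I}(f)$ under \emph{all} nonzero real scalars (if $f(\mathbf{x}+i\mathbf{y})=0$ then $f(\lambda\mathbf{x}+i\lambda\mathbf{y})=\lambda^{\deg f}\cdot 0=0$ for any $\lambda\in\R\setminus\{0\}$), so $b\mathbf{e}\in\mathcal{I}(f)$ immediately yields $\mathbf{e}\in\mathcal{I}(f)$ without invoking Remark~\ref{re:symm}. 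The paper handles this step in exactly this way, and likewise derives $f(\mathbf{e})\neq 0$ from $f(\mathbf{e}+i\mathbf{e})\neq 0$ via the homogeneity factor $(1+i)^{\deg f}$ rather than via $f(i\mathbf{e})$; both variants are fine.
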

	
	\begin{proof}
          Let $f$ be hyperbolic with respect to $\mathbf{e}\in\R^n$. 
Assuming $\mathbf{e} \in \mathcal{I}(f)$ then implies that
for some $\mathbf{x} \in \R^n$, the imaginary unit $i$ is a root of 
the real function $t \mapsto f(\mathbf{x} + t \mathbf{e})$. This
is a contradiction to the hyperbolicity of $f$.

Conversely, let $\mathbf{e}\not\in \mathcal{I}(f)$. Then we have
$f(\mathbf{x} + i \mathbf{e}) \neq 0$ for all $\mathbf{x} \in \R^n$,
so that in particular
\[
  f(\mathbf{e}) \ = \ (1+i)^{-\deg f} f((1+i) \mathbf{e}) \ = \
  (1+i)^{-\deg f} f(\mathbf{e} + i \mathbf{e}) \neq 0 \, .
\]
Furthermore, if there exists an $\mathbf{x} \in \R^n$ such that
$t \mapsto f(\mathbf{x} + t \mathbf{e})$ has a complex solution
$a + ib$ with $b \neq 0$, then the homogeneous function $f$ would
satisfy
\[
  f(\mathbf{x} + a \mathbf{e} + i b \mathbf{e}) \ = \ 0
\]
in contradiction to $\mathbf{e} \not\in \mathcal{I}(f)$. Hence, $f$ is
hyperbolic with respect to $\mathbf{e}$.
\end{proof}

Similar to Theorem~\ref{thm:connection-complement-hyperbolicity-cone},
for the case of homogeneous polynomials $f \in \R[\mathbf{z}]$,
the components of the complement of $\mathcal{I}(f)$ actually coincide with the
hyperbolicity cones of $f$ (as defined, e.g., in \cite{garding-59}). 
The connection between hyperbolicity cones of homogeneous polynomials
and imaginary connections are explored further in a follow-up
article by the first and the second author \cite{joergens-theobald-hyperbolicity}.

\section{Quadratic and multilinear polynomials\label{se:quadratic}}

In this section, we deal with quadratic and multilinear polynomials.
First, we characterize the imaginary projections of quadratic polynomials with real coefficients. The initial two lemmas reduce the problem
to the imaginary projections of quadratic polynomials in a normal form.

\begin{lemma}\label{le:transf2}
	Let $f\in\C[\mathbf{z}]$ and $A\in\R^{n\times n}$ be an invertible matrix. Then, $\mathcal{I}(f(A\mathbf{z}))=A^{-1} \mathcal{I}(f(\mathbf{z}))$.
\end{lemma}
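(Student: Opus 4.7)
The plan is to reduce the statement to a direct change-of-variables argument, using crucially that $A$ has \emph{real} entries, so that the $\Im$-operator commutes with multiplication by $A$ and by $A^{-1}$.

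First I would rewrite $\mathcal{I}(f(A\mathbf{z}))$ explicitly. By definition,
\[
  \mathcal{I}(f(A\mathbf{z})) \ = \ \{\Im(\mathbf{z}) \, : \, \mathbf{z}\in\C^n, \ f(A\mathbf{z})=0\}.
\]
Since $A$ is invertible, the substitution $\mathbf{w} = A\mathbf{z}$ sets up a bijection between $\{\mathbf{z}\in\C^n : f(A\mathbf{z})=0\} = A^{-1}\mathcal{V}(f)$ and $\mathcal{V}(f)$. Thus the task reduces to showing that the imaginary part commutes with $A^{-1}$.

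The key point, which I would then isolate, is the following elementary observation: for any real matrix $B\in\R^{n\times n}$ and any $\mathbf{w}\in\C^n$, writing $\mathbf{w}=\mathbf{u}+i\mathbf{v}$ with $\mathbf{u},\mathbf{v}\in\R^n$ yields $B\mathbf{w} = B\mathbf{u} + i B\mathbf{v}$, so $\Im(B\mathbf{w}) = B\Im(\mathbf{w})$. Applying this to $B = A^{-1}$ (which is again real because $A$ is real and invertible), I obtain
\[
  \Im(A^{-1}\mathbf{w}) \ = \ A^{-1}\Im(\mathbf{w}) \qquad \text{for all } \mathbf{w}\in\C^n.
\]
Combining this with the substitution $\mathbf{w}=A\mathbf{z}$ gives
\[
  \mathcal{I}(f(A\mathbf{z})) \ = \ \{A^{-1}\Im(\mathbf{w}) \, : \, \mathbf{w}\in\mathcal{V}(f)\} \ = \ A^{-1}\mathcal{I}(f),
\]
which is the desired equality.

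There is no real obstacle here; the only point that must not be glossed over is the reality of $A$, without which $\Im$ would not commute with $A^{-1}$ and the statement would genuinely fail. I would emphasize this by noting that, in contrast, an analogous statement for $A\in\C^{n\times n}$ does not hold in general, so the hypothesis $A\in\R^{n\times n}$ is essential.
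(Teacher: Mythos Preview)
Your proof is correct and follows essentially the same approach as the paper: both arguments rest on the observation that a real invertible matrix acts separately on the real and imaginary parts of $\mathbf{z}$, so that the change of variables $\mathbf{w}=A\mathbf{z}$ commutes with $\Im$. The paper's version is slightly more compressed, but the content is identical.
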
	
\begin{proof}
	Writing $\mathbf{z}=\mathbf{x}+i\mathbf{y}$, the matrix $A$ operates separately on $\mathbf{x}$ and $\mathbf{y}$. Hence,
	\begin{eqnarray*}
	\mathcal{I}(f(A\mathbf{z})) & = & \{\mathbf{y} \, : \, \exists \mathbf{x} \in \R^n \; f(A(\mathbf{x} + i\mathbf{y}))=0\} \ = \
      \{ A^{-1} \mathbf{y'} \, : \, \exists \mathbf{x}' \in \R^n \; f(\mathbf{x'}+i \mathbf{y'})=0\} \\
    & = &
      A^{-1} \mathcal{I}(f(\mathbf{z})).
	\end{eqnarray*}
\vspace*{-5ex}

\end{proof}

\begin{lemma}\label{le:transf1}
	A real translation $\mathbf{z}\mapsto \mathbf{z}+\mathbf{a}$, $\mathbf{a}\in\R^n$, does not change the imaginary projection of a polynomial. An imaginary translation $\mathbf{z}\mapsto \mathbf{z}+i\mathbf{a}$, $\mathbf{a}\in\R^n$, shifts an imaginary projection in direction $-\mathbf{a}$.
\end{lemma}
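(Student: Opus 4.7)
The plan is a direct verification by tracking how the variety transforms under each translation and then projecting. For the real translation, set $g(\mathbf{z}) = f(\mathbf{z}+\mathbf{a})$ with $\mathbf{a} \in \R^n$. Then $\mathbf{z} \in \mathcal{V}(g)$ if and only if $\mathbf{z}+\mathbf{a} \in \mathcal{V}(f)$, so $\mathcal{V}(g) = \mathcal{V}(f) - \mathbf{a}$. Writing $\mathbf{z} = \mathbf{x}+i\mathbf{y}$ and using that $\mathbf{a}$ is real, $\Im(\mathbf{z}-\mathbf{a}) = \Im(\mathbf{z})$ componentwise. Hence
\[
\mathcal{I}(g) \ = \ \Im\mathcal{V}(g) \ = \ \Im(\mathcal{V}(f)-\mathbf{a}) \ = \ \Im\mathcal{V}(f) \ = \ \mathcal{I}(f).
\]

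For the imaginary translation, set $h(\mathbf{z}) = f(\mathbf{z}+i\mathbf{a})$ with $\mathbf{a}\in\R^n$. The same reasoning gives $\mathcal{V}(h) = \mathcal{V}(f) - i\mathbf{a}$. Now, for any $\mathbf{w} = \mathbf{u}+i\mathbf{v} \in \C^n$, we have $\Im(\mathbf{w}-i\mathbf{a}) = \mathbf{v} - \mathbf{a}$. Consequently,
\[
\mathcal{I}(h) \ = \ \Im(\mathcal{V}(f)-i\mathbf{a}) \ = \ \Im\mathcal{V}(f) - \mathbf{a} \ = \ \mathcal{I}(f) - \mathbf{a},
\]
which is precisely a shift of $\mathcal{I}(f)$ in the direction $-\mathbf{a}$.

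There is no real obstacle; the proof is a one-line computation in each case, and the only point requiring care is to distinguish the action on the real and imaginary parts of $\mathbf{z}$ and to note that $\Im$ is $\R$-linear (so constants from $i\R^n$ pass through as real shifts, while constants from $\R^n$ vanish). No earlier result from the excerpt is needed beyond the definition of $\mathcal{I}(f)$.
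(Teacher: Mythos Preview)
Your proof is correct and follows essentially the same approach as the paper, which dispatches the lemma in a single sentence by observing that a real translation only moves the real part of $\mathbf{z}$ while an imaginary translation shifts the imaginary parts of the zeros by $-\mathbf{a}$. You have simply spelled out the same observation in more detail.
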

\begin{proof}
	The statement holds, since the first kind of transformation just translates the real part of the variables and the second one shifts the imaginary parts of the solutions of $f(\mathbf{z})=0$ in direction $-\mathbf{a}$.
\end{proof}

By the Lemmas~\ref{le:transf2} and~\ref{le:transf1}, 
it suffices to study the imaginary projections of polynomials in a normal form in order to
understand the imaginary projections of general quadratic polynomials with real coefficients.
Every real bivariate quadric is affinely 
equivalent  to a quadric given by one of the following polynomials, where the names come 
from the conic sections arising from
considering these polynomials as real polynomials.

\begin{enumerate}
	\item[$(i)$] $z_1^2+z_2^2-1$ (ellipse),
	\item[$(ii)$] $z_1^2-z_2^2-1$ (hyperbola),
	\item[$(iii)$] $z_1^2+z_2$ (parabola),
        \item[$(iv)$] $z_1^2+z_2^2+1$ (empty set),
\end{enumerate}
or one of the special cases
$(v)$ $z_1^2-z_2^2$ (pair of crossing lines),
$(vi)$ $z_1^2-1$ (parallel lines, or a single line $z_1^2$),
$(vii)$ $z_1^2+z_2^2$ (isolated point),
$(viii)$ $z_1^2+1$ (empty set).

In the following theorem, we characterize the imaginary projections of these quadratic polynomials.

\begin{thm}\label{th:quadricr2}
For a quadratic polynomial $f \in \R[z_1,z_2]$, we have
\[
  \mathcal{I}(f) \ = \ \begin{cases}
    \R^2 & \text{if $f$ is of type $(i)$}, \\
    \{-1 \le y_1^2- y_2^2 < 0 \} \cup \{ \mathbf{0} \} & \text{if $f$ is of type $(ii)$}, \\
    \R^2\setminus\{(0,y_2):y_2\neq0\} &
       \text{if $f$ is of type $(iii)$}, \\
    \{y_1^2+y_2^2-1 \ge 0\} & \text{if $f$ is of type $(iv)$}. 
  \end{cases}
\]
In the cases $(v)$ -- $(viii)$, we respectively have
$\mathcal{I}(f)=\{\mathbf{y} \in \R^2 \, : \, y_1^2-y_2^2=0\}$,
$\mathcal{I}(f)=\{\mathbf{y} \in \R^2 \, : \, y_1 = 0 \}$,
$\mathcal{I}(f)=\R^2$,
and $\mathcal{I}(f)=\{\mathbf{y} \in \R^2 \, : \, y_1 = \pm 1\}$.
\end{thm}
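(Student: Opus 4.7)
The plan is to verify each of the eight normal forms in turn, relying on Lemma~\ref{lemma:imaginary-projection-product} and Theorem~\ref{thm:linear-case-imaginary-projection} whenever $f$ splits over $\C$ into linear factors, and on a direct elimination argument otherwise. The degenerate cases $(v)$--$(viii)$ I would dispose of first by factoring: $z_1^2 - z_2^2 = (z_1-z_2)(z_1+z_2)$, $z_1^2 - 1 = (z_1-1)(z_1+1)$, $z_1^2+1 = (z_1-i)(z_1+i)$, and $z_1^2 + z_2^2 = (z_1+iz_2)(z_1-iz_2)$. The factors in $(v),(vi),(viii)$ have real coefficient vectors, so Theorem~\ref{thm:linear-case-imaginary-projection}(1) identifies each imaginary projection as the stated hyperplane. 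The factors in $(vii)$ have coefficient vector $(1,\pm i)$, which is not a complex scalar multiple of a real vector, so Theorem~\ref{thm:linear-case-imaginary-projection}(1) gives $\mathcal{I}=\R^2$ for each factor. Applying Lemma~\ref{lemma:imaginary-projection-product} yields the claimed formulas.

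For the irreducible cases $(i)$--$(iv)$, I would decompose $f(\mathbf{x}+i\mathbf{y}) = \Re f(\mathbf{x},\mathbf{y}) + i\,\Im f(\mathbf{x},\mathbf{y})$ into two real equations and, for fixed $\mathbf{y}\in\R^2$, decide whether the resulting system in $\mathbf{x}$ has a real solution. In each case the imaginary part $\Im f = 2(x_1 y_1 \pm x_2 y_2)$ (or $2x_1 y_1 + y_2$ in $(iii)$) is affine linear in $\mathbf{x}$; assuming one of the relevant $y_j$ is nonzero, I would solve for one variable and substitute into $\Re f = 0$ to obtain a single quadratic condition in the remaining real variable. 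A sign check on its discriminant or constant term then produces exactly the semi-algebraic description. For example in $(iv)$ the substitution produces $x_2^2 = y_1^2(y_1^2+y_2^2-1)/(y_1^2+y_2^2)$, solvable iff $y_1^2+y_2^2 \ge 1$; in $(ii)$ it produces $x_2^2 = y_1^2(y_1^2-y_2^2+1)/(y_2^2-y_1^2)$, solvable iff $-1 \le y_1^2 - y_2^2 < 0$; and in $(i)$ one obtains $x_2^2 = y_1^2(y_1^2+y_2^2+1)/(y_1^2+y_2^2)$, which is always solvable.

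The main obstacle is the treatment of those degenerate sub-configurations of $\mathbf{y}$ for which the elimination above breaks down, namely $y_1=y_2=0$ in cases $(i),(ii),(iv)$ and $y_1 = 0$ in case $(iii)$, as well as the single-index degeneracies $y_1=0,\,y_2\neq 0$ and $y_2=0,\,y_1\neq 0$ in $(ii)$. Each of these has to be resolved by a separate direct inspection of $\Re f = 0$. This is where the qualitative subtleties of the theorem emerge: in $(ii)$ the degenerate branch $y_1 = 0$ recovers exactly the slab $y_2^2 \le 1$, gluing onto the open region $-1 \le y_1^2-y_2^2 < 0$, while the complementary degenerate branch $y_2 = 0$ produces no point except the isolated singleton $(0,0)$, and in $(iii)$ the line $\{(0,y_2) : y_2 \neq 0\}$ is excluded because $\Im f = y_2$ at $y_1 = 0$ cannot vanish. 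Combining these edge cases with the generic elimination yields the stated formulas.
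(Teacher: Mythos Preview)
Your argument is correct and, for the nondegenerate cases $(i)$--$(iv)$, proceeds exactly as in the paper: separate $f(\mathbf{x}+i\mathbf{y})=0$ into real and imaginary parts, use the bilinear imaginary part to eliminate one $x_j$ (when the relevant $y_j$ is nonzero), and reduce to a sign condition on a single quadratic expression in the remaining real variable. The resulting formulas you write down, e.g.\ $x_2^2 = y_1^2(y_1^2+y_2^2-1)/(y_1^2+y_2^2)$ in case~$(iv)$, match the paper's equation~\eqref{eq:quaddesc} after specializing $(\alpha,\beta,\gamma)$, and your treatment of the residual loci $y_1=0$ or $\mathbf{y}=0$ is the same kind of direct inspection the paper performs. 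One minor remark: in case~$(ii)$ the branch $y_1=0$, $0<y_2^2\le 1$ is already contained in $\{-1\le y_1^2-y_2^2<0\}$, so only the origin needs to be adjoined separately; your wording suggests a bit more gluing than is actually needed, but this does not affect the argument.

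Where you genuinely diverge from the paper is in the degenerate cases $(v)$--$(viii)$. The paper treats these uniformly by the same elimination formula~\eqref{eq:quaddesc}, specializing $(\alpha,\beta,\gamma)$. You instead factor each normal form over~$\C$ into linear pieces and invoke Lemma~\ref{lemma:imaginary-projection-product} together with Theorem~\ref{thm:linear-case-imaginary-projection}. This is a legitimate and arguably more structural alternative: it makes transparent \emph{why} $(vii)$ gives all of $\R^2$ (the linear factors $z_1\pm i z_2$ have coefficient vectors not in any $e^{i\varphi}\R^2$) and why $(viii)$ gives two parallel lines (the factors $z_1\pm i$ have real leading coefficients but nonreal constant terms). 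The paper's route is more uniform; yours reuses earlier results more efficiently. Either way the conclusions agree.
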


\begin{figure}[ht]
	\[
	\includegraphics[height=0.26\linewidth]{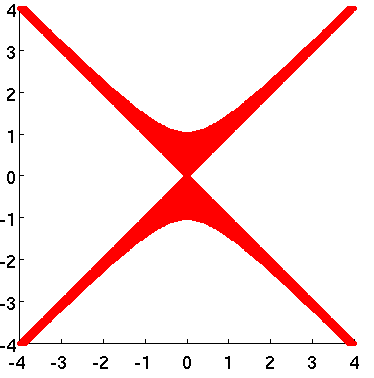} \quad
	\includegraphics[height=0.26\linewidth]{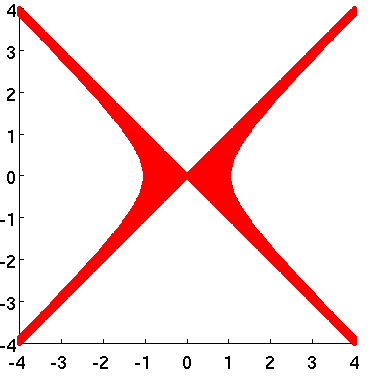}\quad
	\includegraphics[height=0.26\linewidth]{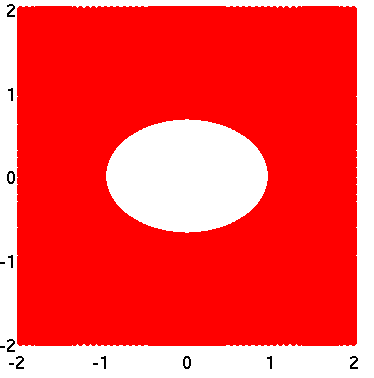}
	\]
	\caption{The imaginary projections of $f(z_1,z_2)=-z_1^2+z_2^2-1$, $f(z_1,z_2)=z_1^2-z_2^2-1$, and
		$f(z_1,z_2)=2z_1^2+z_2^2+1$.}
	\label{fi:ax^2+by^2-1}
\end{figure}

The imaginary projections of some quadratic polynomials are shown in 
Figure \ref{fi:ax^2+by^2-1}, in particular, the middle figure
depicts case~(ii) from Theorem~\ref{th:quadricr2}.

\begin{proof}
For cases $(i)$--$(ii)$ and $(iv)$--$(viii)$, we
consider a polynomial $f = \alpha z_1^2+\beta z_2^2+\gamma$
with $\alpha, \beta, \gamma \in \R$. Decomposing $f(\mathbf{z}) = 0$
into the real and imaginary parts gives 
$\alpha x_1^2 - \alpha y_1^2 + \beta x_2^2 - \beta y_2^2 + \gamma 
= 0$ and
$\alpha x_1 y_1 + \beta x_2 y_2 = 0$. For $y_1 \neq 0$,
eliminating $x_1$ shows that
	\begin{equation}
          \label{eq:quaddesc}
  \mathcal{I}(f) \ = \ 
  \{ \textbf{y} \in \R^n \, : \, 
	x_2^2\left(\alpha \beta y_1^2 + \beta^2y_2^2 \right)=\alpha y_1^2 (\alpha y_1^2+ \beta y_2^2 - \gamma) \text{ has a real solution $x_2$} \} \, .
	\end{equation}
In case~$(i)$, we have $\alpha = \beta = 1$, $\gamma = -1$, which
altogether gives $\mathcal{I}(f) = \R^2$. In case $(ii)$, we have $\alpha = 1$, 
$\beta = \gamma = -1$. For $y_1 \neq 0$,
real solutions for $x_2$ in~\eqref{eq:quaddesc}
exist for $0 < y_2^2 - y_1^2 < 1$ as well as in the special case
$y_1^2 - y_2^2 + 1 = 0$. And for $y_1 = 0$, we obtain
$\mathbf{y} \in \mathcal{I}(f)$ if and only if $y_2^2 \le 1$.

Cases $(iv)$--$(viii)$ can be treated similarly.
Finally, case $(iii)$ is linear in $z_2$, so that the equations for the
real and imaginary part can be solved directly for $x_2$ and $y_2$.
\end{proof}

Now we deal with quadrics in $n$-dimensional space.
Since every quadric in $\R^n$ is affinely equivalent to a quadric given by one of the
following polynomials,
\[
  \begin{array}{cll}
  \text{(I)}& \sum_{j=1}^p z_j^2 - \sum_{j=p+1}^r z_j^2 & \quad (1 \le p \le r, \, r \ge 1, \, p \ge \frac{r}{2}) \, , \\
 \text{(II)} & \sum_{j=1}^p z_j^2 - \sum_{j=p+1}^r z_j^2 + 1 &  \quad (0 \le p \le r, \, r \ge 1) \, , \\
 \text{(III)} & \sum_{j=1}^p z_j^2 - \sum_{j=p+1}^r z_j^2 + z_{r+1} & \quad (1 \le p \le r, \, r \ge 1,
  \, p \ge \frac{r}{2}) \, ,
\end{array}
\]
it suffices to discuss these cases. (See, e.g., \cite{berger-book}
as a general background reference for real quadrics.)

\begin{thm}\label{th:quadrics}
Let $n \ge r \ge 3$ and $f \in \R[\mathbf{z}]$ be a quadratic polynomial.
\begin{enumerate}
\item If $f$ is of type $\mathrm{(I)}$, then
  \begin{equation}\label{eq:class1}
  \mathcal{I}(f) \ = \ \begin{cases}
  \R^n & \text{if } \frac{r}{2} \le p < r-1 \text{ or } p = r \, , \\
  \{\mathbf{y} \in \R^n \; : \, y_r^2\leq  \sum_{j=1}^{r-1} y_j^2\} & \text{if } p = r-1 \, .
  \end{cases}
  \end{equation}
\item If $f$ is of type $\mathrm{(II)}$, then
  \begin{equation}\label{eq:class2}
  \mathcal{I}(f) \ = \ \begin{cases}
  \R^n & \text{if } p = 0 \text{ or } 1 < p < r-1 \, , \\
  \{\mathbf{y} \in \R^n \; : \, y_1^2 - \sum_{j=2}^{r} y_j^2 \le 1\} & \text{if } p = 1 \, , \\
   \{\mathbf{y} \in \R^n \; : \, \sum_{j=1}^{r-1} y_j^2 > y_r^2 \} \cup \{\mathbf{0}\} & \text{if } p = r-1 \, , \\
   \{\mathbf{y} \in \R^n \; : \, \sum_{j=1}^{r} y_j^2 \ge 1\} & \text{if } p = r \, .
  \end{cases}
\end{equation}
\item If $n > r$ and $f$ is of type $\mathrm{(III)}$,  then 
$$\mathcal{I}(f) \ = \ \R^n \setminus \{ (0,\ldots,0,y_{r+1},y_{r+2}, \ldots, y_n) \, : \, y_{r+1} \neq 0, \, y_{r+2}, \ldots, y_n \in \R \}.$$
\end{enumerate}
\end{thm}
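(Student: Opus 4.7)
My plan is to decompose the equation $f(\mathbf{x}+i\mathbf{y})=0$ into real and imaginary parts using the identity $Q(\mathbf{x}+i\mathbf{y}) = Q(\mathbf{x}) - Q(\mathbf{y}) + 2iB(\mathbf{x},\mathbf{y})$, where $Q(\mathbf{z})=\sum_{j=1}^p z_j^2-\sum_{j=p+1}^r z_j^2$ has signature $(p,r-p)$ and $B$ is the associated symmetric bilinear form. For type~$\mathrm{(I)}$ this yields the real system $Q(\mathbf{x})=Q(\mathbf{y})$ and $B(\mathbf{x},\mathbf{y})=0$; for type~$\mathrm{(II)}$ the first equation becomes $Q(\mathbf{x})=Q(\mathbf{y})-1$; and for type~$\mathrm{(III)}$ the extra variable $z_{r+1}$ contributes $x_{r+1}$ to the real and $y_{r+1}$ to the imaginary equation. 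Membership $\mathbf{y}\in\mathcal{I}(f)$ is then a solvability question for the corresponding real system in $\mathbf{x}\in\R^r$.

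Type~$\mathrm{(III)}$ I would dispatch first. Since $x_{r+1}$ occurs linearly with coefficient one, the real equation $Q(\mathbf{x})+x_{r+1}=Q(\mathbf{y})$ is always solvable in $x_{r+1}$ for any choice of $(x_1,\ldots,x_r)$, so only the imaginary constraint $B(\mathbf{x},\mathbf{y})=-y_{r+1}/2$ is restrictive. When $(y_1,\ldots,y_r)\neq 0$ the functional $B(\,\cdot\,,\mathbf{y})$ is nonzero by non-degeneracy of $Q$, hence any right-hand side is attained and $\mathbf{y}\in\mathcal{I}(f)$. When $(y_1,\ldots,y_r)=0$ the functional vanishes identically, and the constraint is consistent only if $y_{r+1}=0$. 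This immediately gives the exclusion set described in statement~(3).

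For types~$\mathrm{(I)}$ and $\mathrm{(II)}$ I would fix $\mathbf{y}$ and study the intersection $\{Q=c\}\cap H$, where $H=\{\mathbf{x}\in\R^r:B(\mathbf{x},\mathbf{y})=0\}$ and $c$ is either $Q(\mathbf{y})$ or $Q(\mathbf{y})-1$. When $\mathbf{y}=0$ one has $H=\R^r$ and the question reduces to whether $c$ lies in the range of $Q$. For $\mathbf{y}\neq 0$ the hyperplane $H$ has codimension one, and by Witt's theorem its inherited signature is controlled by the sign of $Q(\mathbf{y})$: it is $(p-1,r-p)$ when $Q(\mathbf{y})>0$, $(p,r-p-1)$ when $Q(\mathbf{y})<0$, and $(p-1,r-p-1)$ together with a one-dimensional radical spanned by $\mathbf{y}$ when $Q(\mathbf{y})=0$ and $\mathbf{y}\neq 0$. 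The range of $Q|_H$ equals $\R$ whenever both parts of its signature are nonzero, $[0,\infty)$ in the positive semidefinite case, and $(-\infty,0]$ in the negative semidefinite case. The assumption $r\ge 3$ provides the essential extra room: in most sub-configurations both signs of the signature of $Q|_H$ survive, and only extreme choices of $p$ (namely $p\in\{r-1,r\}$ for type~$\mathrm{(I)}$ and $p\in\{0,1,r-1,r\}$ for type~$\mathrm{(II)}$) collapse the form on $H$ to a definite one and therefore produce a nontrivial inequality condition on $\mathbf{y}$.

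Matching the required value $c$ to the range of $Q|_H$ in each extreme sub-configuration then reproduces the claimed formulas, while the non-extreme cases all yield $\mathcal{I}(f)=\R^n$. The main obstacle I anticipate is the bookkeeping of the isotropic boundary sub-case $Q(\mathbf{y})=0$ with $\mathbf{y}\neq 0$: there $Q|_H$ is degenerate, and one must argue on the quotient $H/\R\mathbf{y}$, using $Q(\mathbf{y})=0$ together with $B(\mathbf{y},H)=0$, to decide membership. A secondary subtlety for type~$\mathrm{(II)}$ is that the shift $c=Q(\mathbf{y})-1$ breaks the symmetry between $p$ and $r-p$, so the behaviour at $p=1$ and $p=r-1$ is genuinely different and each must be verified separately rather than by duality.
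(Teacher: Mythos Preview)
Your approach is sound and genuinely different from the paper's.  You work abstractly with the signature of the restriction $Q|_H$ to the $B$-orthogonal hyperplane $H=\mathbf{y}^{\perp_B}$ (via Witt's theorem, or simply the additivity of signatures across a $B$-orthogonal splitting when $Q(\mathbf{y})\neq 0$ together with the hyperbolic-plane construction in the isotropic case), and then read off the range of $Q|_H$ from that signature.  The paper instead argues by concrete Euclidean geometry: it interprets the level set $\{Q(\mathbf{x})=c\}$ as a one- or two-sheeted hyperboloid, and in the two-sheeted situation uses rotational symmetry to reduce to a planar hyperbola and verify directly that it contains no pair of $B$-orthogonal position vectors; for $p=r$ it exhibits an explicit solution $\mathbf{x}=\|\mathbf{y}\|(y_1^2+y_2^2)^{-1/2}(-y_2,y_1,0,\ldots,0)$.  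Your treatment is more uniform and makes the isotropic boundary case mechanical, whereas the paper's is elementary and avoids invoking any structure theory of quadratic forms.  Type~$\mathrm{(III)}$ is handled the same way in both.

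One substantive warning about your claim that the method ``reproduces the claimed formulas'': for type~$\mathrm{(II)}$ with $p=r-1$ it does not, because the stated formula appears to be incorrect.  Here $Q$ has signature $(r-1,1)$; whenever $Q(\mathbf{y})<0$ the restriction $Q|_H$ has signature $(r-1,0)$ and range $[0,\infty)$, while the required value $c=Q(\mathbf{y})-1<-1$ is negative, so all such $\mathbf{y}$ lie outside $\mathcal{I}(f)$.  The isotropic case $Q(\mathbf{y})=0$, $\mathbf{y}\neq 0$ is excluded for the same reason (quotient signature $(r-2,0)$, $c=-1$).  A direct check for $r=3$, $p=2$, $\mathbf{y}=(0,0,\tfrac12)$ gives $x_3=0$ from the imaginary part and then $x_1^2+x_2^2+\tfrac54=0$ from the real part, so $\mathbf{y}\notin\mathcal{I}(f)$ even though $Q(\mathbf{y})=-\tfrac14\ge -1$.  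Your method yields $\mathcal{I}(f)=\{Q(\mathbf{y})>0\}\cup\{0\}$ in this sub-case; the paper's proof here is only the sentence ``the case $p=r-1$ is similar'', and the analogy with $p=1$ was not carried through correctly.
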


Note that the case $n \ge 3$ differs significantly from $n=2$.
The proof of Theorem~\ref{th:quadrics} is given in the
Lemmas~\ref{thm:quadric-cone}--\ref{le:hyperbolicparaboloid}.

\begin{lemma}\label{thm:quadric-cone}
	Let $n\geq r \ge 3$. If $f(\mathbf{z})=\sum_{j=1}^pz_j^2-\sum_{j=p+1}^{r}z_j^2$ 
      with $\frac{r}{2} < p\leq r$, then $\mathcal{I}(f)$ is given 
  by~\eqref{eq:class1}.
\end{lemma}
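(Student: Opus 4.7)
The plan is to convert the condition $\mathbf{y}\in\mathcal{I}(f)$ into a solvability question about a real quadratic form restricted to a hyperplane, and then read off the answer from the classification of indefinite forms by signature.

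First I would strip away the ``free'' coordinates. Since $f$ does not involve $z_{r+1},\ldots,z_n$, the real parts $x_{r+1},\ldots,x_n$ can be chosen arbitrarily, so $\mathcal{I}(f)=\mathcal{I}(f|_{\R^r})\times\R^{n-r}$. Both sides of \eqref{eq:class1} have precisely this product shape, so it suffices to work with $n=r$. Next, let $Q(\mathbf{z})=\sum_{j=1}^p z_j^2-\sum_{j=p+1}^r z_j^2$ and let $B$ be its polarisation on $\R^r$. Substituting $\mathbf{z}=\mathbf{x}+i\mathbf{y}$ and expanding gives
\[
f(\mathbf{x}+i\mathbf{y}) \ = \ Q(\mathbf{x})-Q(\mathbf{y})+2i\,B(\mathbf{x},\mathbf{y}),
\]
so $\mathbf{y}\in\mathcal{I}(f)$ iff there exists $\mathbf{x}\in\R^r$ with $Q(\mathbf{x})=Q(\mathbf{y})$ and $B(\mathbf{x},\mathbf{y})=0$. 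Equivalently, the real number $Q(\mathbf{y})$ lies in the image of $Q$ restricted to the hyperplane $\mathbf{y}^\perp=\{\mathbf{x}:B(\mathbf{x},\mathbf{y})=0\}$ (with $\mathbf{y}^\perp=\R^r$ if $\mathbf{y}=0$).

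The case $Q(\mathbf{y})=0$ is trivial (take $\mathbf{x}=\mathbf{y}$), so the interesting case is $Q(\mathbf{y})\neq 0$. Then $\mathbf{y}\notin\mathbf{y}^\perp$, the splitting $\R^r=\R\mathbf{y}\oplus\mathbf{y}^\perp$ is $Q$-orthogonal, and $Q|_{\mathbf{y}^\perp}$ has signature $(p-1,r-p)$ or $(p,r-p-1)$ depending on the sign of $Q(\mathbf{y})$. From here the proof splits into the three subcases of the claim: (i) $p=r$, where $Q$ is positive definite and one simply finds $\mathbf{x}\in\mathbf{y}^\perp$ of the same Euclidean norm as $\mathbf{y}$ (possible since $r\geq 3$); (ii) $r/2<p<r-1$, where $p\geq 2$ and $r-p\geq 2$, so both possible restricted signatures are indefinite and $Q|_{\mathbf{y}^\perp}$ is onto $\R$, giving $\mathcal{I}(f)=\R^r$; and (iii) $p=r-1$, where $Q|_{\mathbf{y}^\perp}$ has signature $(r-2,1)$ when $Q(\mathbf{y})>0$, still indefinite, but signature $(r-1,0)$ when $Q(\mathbf{y})<0$, i.e.\ positive definite, which makes $Q(\mathbf{x})=Q(\mathbf{y})<0$ unsolvable. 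This yields $\mathcal{I}(f)=\{\mathbf{y}:Q(\mathbf{y})\geq 0\}$, which is exactly the inequality $-\sum_{j=1}^{r-1}y_j^2+y_r^2\leq 0$.

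The main obstacle is the signature bookkeeping: one must verify that $Q|_{\mathbf{y}^\perp}$ is indefinite precisely when the claim asserts $\mathcal{I}(f)=\R^n$, and positive definite precisely when a bounded complement appears in the case $p=r-1$. This is exactly where the three hypotheses $r/2<p$, $p\leq r-1$ (or $p=r$), and $r\geq 3$ enter; after that, attainment of the prescribed value follows from the standard fact that a nondegenerate indefinite real quadratic form is surjective onto $\R$.
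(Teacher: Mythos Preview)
Your argument is correct and takes a genuinely different route from the paper. The paper writes out the real and imaginary parts explicitly as the system \eqref{eq:hyperboloid-1}--\eqref{eq:hyperboloid-2} and then proceeds case by case with concrete geometric constructions: for $p=r$ it exhibits an explicit orthogonal vector of the right norm; for $p=r-1$ it distinguishes one-sheeted versus two-sheeted hyperboloids in the $\mathbf{x}$-variables and, in the two-sheeted case, argues via rotational symmetry that the hyperboloid $-\sum_{j=1}^{n-1}x_j^2+x_n^2=1$ contains no pair of orthogonal position vectors; for $\frac{r}{2}<p<r-1$ it fixes some coordinates $x_{p+2},\ldots,x_n$ to force a one-sheeted hyperboloid and then intersects with an affine hyperplane. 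Your approach instead packages everything as the question of whether $Q(\mathbf{y})$ lies in the range of $Q|_{\mathbf{y}^\perp}$ and answers it purely by reading off the signature of the restricted form via the $Q$-orthogonal splitting $\R^r=\R\mathbf{y}\oplus\mathbf{y}^\perp$.

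Your route is cleaner and more uniform: the three subcases all reduce to the same one-line signature check, and the key obstruction in the case $p=r-1$, $Q(\mathbf{y})<0$ (which the paper handles by the ad hoc ``no orthogonal position vectors on a two-sheeted hyperboloid'' argument) becomes simply the observation that $Q|_{\mathbf{y}^\perp}$ is positive definite of signature $(r-1,0)$. The paper's approach, on the other hand, is entirely elementary---no appeal to Witt cancellation or Sylvester's law is needed---and its explicit constructions foreshadow the slightly more delicate arguments in the subsequent lemmas where a constant term breaks homogeneity.
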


\begin{proof}
Without loss of generality we can assume $r=n$.
	Splitting the problem into the real and imaginary part yields
	\begin{eqnarray}
	\sum_{j=1}^{p} x_j^2- \sum_{j=p+1}^n x_j^2 - 
       \sum_{j=1}^{p} y_j^2 + \sum_{j=p+1}^{n} y_j^2 & = & 0 \, , \label{eq:hyperboloid-1}\\
	\sum_{j=1}^{p} x_jy_j - \sum_{j=p+1}^{n} x_jy_j  & = & 0 \label{eq:hyperboloid-2}\, .
	\end{eqnarray}
  Consider a fixed $\textbf{y} \in\R^n$. 
  If $- \sum_{j=1}^{p} y_j^2 +  \sum_{j=p+1}^n y_j^2=0$, then 
  $x_1 = \cdots = x_n = 0$ gives a solution to~\eqref{eq:hyperboloid-1} and~\eqref{eq:hyperboloid-2}. Therefore, we can assume $- \sum_{j=1}^{p} y_j^2 + \sum_{j=p+1}^n y_j^2\neq0$. 

In the case $p = n$, by reordering the indices, we can assume that $y_1^2 + y_2^2 \neq 0$,
and choose $\textbf{x} = \frac{\|\textbf{y}\|_2}{(y_1^2+y_2^2)^{1/2}}(-y_2,y_1,0, \ldots, 0)$
to obtain a solution for~\eqref{eq:hyperboloid-1} and~\eqref{eq:hyperboloid-2}.

In the case $p = n-1$, the $n$-dimensional hyperboloid~\eqref{eq:hyperboloid-1} 
in the $x$-variables is one-sheeted for $- \sum_{j=1}^{n-1} y_j^2 +  y_n^2<0$ and two-sheeted for $- \sum_{j=1}^{n-1} y_j^2 +  y_n^2>0$. 
In case of a one-sheeted hyperboloid, its intersection with the hyperplane~\eqref{eq:hyperboloid-2} is never empty. Namely, choosing $x_3=\cdots=x_{n-1}=0$, gives the hyperboloid $x_1^2+x_2^2-x_n^2 = \sum_{j=1}^{n-1} y_j^2 - y_n$ in $x_1,x_2,x_n$, that contains the origin in the inner component of its complement.

	Now consider the case where the hyperboloid consists of two sheets. For any $\alpha > 0$, 
the sets $\{\mathbf{y} \in \R^n \, : \, - \sum_{j=1}^{n-1} y_j^2 + y_n^2=\alpha>0\}$ and $\{\mathbf{x} \in \R^n \, : \, \sum_{j=1}^{n-1} x_j^2 - x_n^2=-\alpha\}$ coincide. Furthermore, after a coordinate transformation we can assume $\alpha = 1$ and set $H = \{\mathbf{x} \in \R^n \, : \, -\sum_{j=1}^{n-1} x_j^2 + x_n^2=1\}$. 

We claim that the intersection of $H$ with the hyperplane~\eqref{eq:hyperboloid-2} is always empty.
Due to the symmetry of $H$ with respect to all the coordinate hyperplanes $x_k=0$ for $1 \le k \le n-1$,
it suffices by~\eqref{eq:hyperboloid-2}
to show that the hyperboloid $H$ does not contain two distinct points, whose
position vectors are orthogonal to each other with respect to the Euclidean scalar product.
Because of the rotational symmetry of $H$ with regard to the $x_n$-axis and the invariance
of scalar products under orthogonal transformations,
by applying an orthogonal
transformation it suffices to consider the situation $x_2 = \cdots = x_{n-1} = 0$. 
The resulting hyperbola $-x_1^2+x_n^2=1$ in the $x_1$-$x_n$-plane has no two orthogonal 
position vectors. Namely, the asymptotes $x_1=\pm x_n$ divide the plane into four quarters,
and the hyperbola is contained in the strict interiors of two opposite 
quarters.

Now consider the case $\frac{n}{2} < p < n-1$. By our initial considerations in the proof, we have already covered the case $\mathbf{y} = \mathbf{0}$. 
In the case
$\mathbf{y} \neq \mathbf{0}$, by changing the coordinates we can assume that
$(y_1,y_2,y_{p+1})$ is not the zero vector.
Choose $(x_{p+2},\ldots,x_n)\in\R^{n-p-1}$ such that $-\sum_{j=p+2}^n x_j^2 - \sum_{j=1}^p y_j^2 + \sum_{j=p+1}^n  y_j^2=:\alpha<0$. Then, since $\mathbf{y}$ is fixed,
\eqref{eq:hyperboloid-1} becomes a hyperboloid of one sheet and \eqref{eq:hyperboloid-2} becomes an affine hyperplane. The intersection of these two hypersurfaces is non-empty. Namely, choosing $x_3=\cdots = x_{p}=0$, gives the one-sheeted hyperboloid $\{(x_1,x_2,x_{p+1}) \in \R^3 \, : \, x_1^2+x_2^2-x_{p+1}^2=-\alpha>0\}$, which intersects the affine hyperplane with normal vector $(y_1,y_2,-y_{p+1})$ and constant term $- \sum_{j=p+2}^n  x_jy_j$.
Hence, there exists an $\mathbf{x} \in \R^n$ satisfying~\eqref{eq:hyperboloid-1} 
and~\eqref{eq:hyperboloid-2}.
\end{proof}

\begin{lemma}
	Let $n\geq r \ge 3$ and $f(\mathbf{z})=\sum_{j=1}^p z_j^2 - \sum_{j=p+1}^r z_j^2 + 1$ with 
  $0 \le p \le r$. 
	\begin{enumerate}
       \item If $p = 0$ then $\mathcal{I}(f) = \R^n$.
       \item If $p = 1$ then $\mathcal{I}(f) = \{\mathbf{y} \in \R^n \, : \, y_1^2 - \sum_{j=2}^r y_j^2 \le 1 \}$. 
	\item If $1<p<r-1$ then $\mathcal{I}(f)=\R^n$.
	\item If $p=r-1$ then $\mathcal{I}(f)=
	\{\mathbf{y} \in \R^n \; : \, \sum_{j=1}^{r-1} y_j^2 > y_r^2 \} \cup \{\mathbf{0}\}$.
	\item If $p = r$ then $\mathcal{I}(f) = \{\mathbf{y} \in \R^n \, : \,  \sum_{j=1}^r y_j^2 \ge 1\}$.
	\end{enumerate}
\end{lemma}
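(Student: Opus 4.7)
The plan is to realify the equation $f(\mathbf{z})=0$ in the same spirit as the proof of Lemma~\ref{thm:quadric-cone}. Writing $\mathbf{z}=\mathbf{x}+i\mathbf{y}$ and separating real from imaginary parts turns $f(\mathbf{z})=0$ into the coupled system
\begin{equation*}
Q(\mathbf{x}) \ = \ Q(\mathbf{y})-1, \qquad B(\mathbf{x},\mathbf{y}) \ = \ 0,
\end{equation*}
where $Q(\mathbf{u})=\sum_{j=1}^{p}u_j^2-\sum_{j=p+1}^{r}u_j^2$ has signature $(p,r-p)$ and $B$ is its polar bilinear form. Since neither equation involves the coordinates indexed by $r+1,\ldots,n$, these variables may be chosen freely, so it suffices to treat the case $n=r$. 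Membership $\mathbf{y}\in\mathcal{I}(f)$ is then equivalent to asking whether the value $\alpha:=Q(\mathbf{y})-1$ is attained by $Q$ on the linear hyperplane $H_{\mathbf{y}}:=\{\mathbf{x}\in\R^r:B(\mathbf{x},\mathbf{y})=0\}$.

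The central step is a signature computation for $Q|_{H_{\mathbf{y}}}$. If $Q(\mathbf{y})\ne 0$, the decomposition $\R^r=\mathrm{span}(\mathbf{y})\oplus H_{\mathbf{y}}$ is $B$-orthogonal, so by Sylvester's law the restriction $Q|_{H_{\mathbf{y}}}$ is non-degenerate of signature $(p-1,r-p)$ when $Q(\mathbf{y})>0$ and of signature $(p,r-p-1)$ when $Q(\mathbf{y})<0$. If $Q(\mathbf{y})=0$ and $\mathbf{y}\ne 0$, then $\mathbf{y}\in H_{\mathbf{y}}$ spans the radical of $Q|_{H_{\mathbf{y}}}$, leaving signature $(p-1,r-p-1)$ on a transversal complement; if $\mathbf{y}=0$ then $H_{\mathbf{y}}=\R^r$. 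Combining these signatures with the elementary fact that a (possibly degenerate) real quadratic form of signature $(a,b)$ attains every value in $[0,\infty)$ if $a\ge 1$ and every value in $(-\infty,0]$ if $b\ge 1$, one reads off the attainable $\alpha$ directly.

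The classification then splits into the five cases. For $p\in\{0,r\}$, the form $Q$ is definite and the problem reduces to a single signed sphere equation in the $B$-orthogonal complement of $\mathbf{y}$; the stated descriptions follow immediately. For $1<p<r-1$, in every subcase both entries of the signature of $Q|_{H_{\mathbf{y}}}$ are at least $1$, so $Q|_{H_{\mathbf{y}}}$ attains all of $\R$ and hence $\mathcal{I}(f)=\R^n$. The delicate cases are $p=1$ and $p=r-1$, in which $Q|_{H_{\mathbf{y}}}$ can be (semi-)definite: here the sign of $Q(\mathbf{y})$ determines whether $\alpha=Q(\mathbf{y})-1$ can be hit, and the union of the resulting subcases produces the stated single inequality on $\mathbf{y}$. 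The main obstacle is the degenerate borderline $Q(\mathbf{y})=0$, $\mathbf{y}\ne 0$, where $Q|_{H_{\mathbf{y}}}$ becomes semi-definite along a null direction and one must carefully decide whether $\alpha=-1$ is attained; once this point and the separate case $\mathbf{y}=0$ are pinned down, the remaining calculations are routine.
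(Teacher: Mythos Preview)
Your signature-based approach is more systematic than the paper's geometric reasoning with one- and two-sheeted hyperboloids and orthogonal position vectors: reducing everything to the signature of $Q|_{H_{\mathbf{y}}}$ via Sylvester's inertia law handles all cases uniformly, and for $p=0$, $p=1$, $1<p<r-1$, and $p=r$ your case analysis is correct and recovers the stated results.

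The gap is at $p=r-1$, where you assert without checking that the subcases combine to the stated inequality $\sum_{j<r} y_j^2 - y_r^2 \ge -1$. Carried through, your own computation does \emph{not} give this. When $Q(\mathbf{y})<0$ the restriction $Q|_{H_{\mathbf{y}}}$ has signature $(r-1,0)$, so it is positive definite and attains only $[0,\infty)$; since then $\alpha=Q(\mathbf{y})-1<-1$, no such $\mathbf{y}$ lies in $\mathcal{I}(f)$. Likewise, for $Q(\mathbf{y})=0$ with $\mathbf{y}\ne 0$ the restriction is positive semidefinite and cannot hit $\alpha=-1$. Your method therefore yields $\mathcal{I}(f)=\{Q(\mathbf{y})>0\}\cup\{0\}$ for $p=r-1$, not $\{Q(\mathbf{y})\ge -1\}$. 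A direct check confirms the discrepancy: for $r=3$, $p=2$ and $\mathbf{y}=(0,0,\tfrac12)$ one has $Q(\mathbf{y})=-\tfrac14\ge -1$, yet the imaginary part forces $x_3=0$ and the real part becomes $x_1^2+x_2^2=-\tfrac54$, which has no real solution. So item~(4) as stated is incorrect; the paper dismisses this case as ``similar'' to $p=1$, but the asymmetry introduced by the constant $+1$ breaks the analogy. Had you executed your method here rather than asserted its outcome, you would have caught this.
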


Note that for $r = 3$ the case (3) cannot occur.

\begin{proof}
Similar to the proof of Lemma~\ref{thm:quadric-cone}, we can assume $r=n$
and split the problem into the real and imaginary part,
	\begin{eqnarray}
	\sum_{j=1}^{p} x_j^2- \sum_{j=p+1}^n x_j^2 - 
       \sum_{j=1}^{p} y_j^2 + \sum_{j=p+1}^{n} y_j^2 +1 & = & 0 \, , \label{eq:hyperboloid-2-1}\\
	\sum_{j=1}^{p} x_jy_j - \sum_{j=p+1}^{n} x_ny_n  & = & 0 \label{eq:hyperboloid-2-2}\, .
	\end{eqnarray}
Consider a fixed $\textbf{y} \in\R^n$. 

In the case $p=0$, we obtain the
 two equations $\sum_{j=1}^n x_j^2 = \sum_{j=1}^n y_j^2+1$ and $\sum_{j=1}^n x_jy_j=0$. 
Setting
$\textbf{x} = \big( \frac{\|\textbf{y}\|_2^2+1}{y_1^2+y_2^2} \big)^{1/2}(-y_2,y_1,0, \ldots, 0)$
gives a solution.

In the case $p=1$, set $\alpha =-y_1^2+\sum_{j=2}^{n} y_j^2+1$. Then the statement follows identically as in Lemma~\ref{thm:quadric-cone} in the cases $\alpha =0$, $\alpha<0$ and $\alpha>0$. For $\alpha=0$, the point $\mathbf{x}=\mathbf{0}$ is a solution for $f(\mathbf{z})=f(\mathbf{x}+i\mathbf{y})=0$. For $\alpha>0$, \eqref{eq:hyperboloid-2-1} is a one-sheeted hyperboloid and \eqref{eq:hyperboloid-2-2} is a hyperplane. Their intersection is non-empty. For $\alpha<0$, the formula for $\alpha$ and
\eqref{eq:hyperboloid-2-1} both define two-sheeted hyperboloids. We consider the hyperboloids $H_1:=\{\mathbf{y} \in \R^n \, : \, y_1^2-\sum_{j=2}^{n} y_j^2=1-\alpha\}$ and $H_2:=\{\mathbf{x} \in \R^n \, : \, x_1^2-\sum_{j=2}^n x_j^2=-\alpha\}$. Via the transformations $\mathbf{y}\mapsto \mathbf{y} / \sqrt{1-\alpha}$ and $\mathbf{x}\mapsto \mathbf{x} / \sqrt{-\alpha}$ these sets are transformed into the same set $C = \{\mathbf{x} \in \R^n \, : \, x_1^2-\sum_{j=2}^n x_j^2=1\}$. We know by the proof of Lemma \ref{thm:quadric-cone} that there is no pair of orthogonal position vectors
on $C$. Therefore, there are no orthogonal position vectors 
in $H_1$ and $H_2$. Hence, for $\alpha<0$ the equation $f(\mathbf{z})=0$ has no solution in $\mathbf{x}$. 
The case $p=r-1$ is similar.

In the case $1 < p < r-1$, the statement follows as in Lemma~\ref{thm:quadric-cone}.

In the case $p=n$, there exists an~$\textbf{x}$ satisfying ~\eqref{eq:hyperboloid-2-1} 
and~\eqref{eq:hyperboloid-2-2} if and only if $\sum_{j=1}^n y_j^2 - 1 \ge 0$.
\end{proof}

\begin{lemma}\label{le:hyperbolicparaboloid}
	Let $n > r \geq2$. If $f(\mathbf{z})=\sum_{j=1}^pz_j^2-\sum_{j=p+1}^{r}z_j^2+z_{r+1}$ 
      with $1\leq p\leq r$, then 
      $\mathcal{I}(f)=\R^n \setminus \{ (0,\ldots,0,y_{r+1},y_{r+2}, \ldots, y_n) \, : \, y_{r+1} \neq 0, \, y_{r+2}, \ldots, y_n \in \R \}$.
\end{lemma}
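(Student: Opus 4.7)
The plan is to decompose $f(\mathbf{x}+i\mathbf{y}) = 0$ into real and imaginary parts and exploit the special role that $z_{r+1}$ (and the absent variables $z_{r+2},\ldots,z_n$) play. Writing $z_j = x_j + i y_j$ and using $z_j^2 = x_j^2 - y_j^2 + 2i x_j y_j$, the equation $f(\mathbf{x}+i\mathbf{y})=0$ becomes the system
\begin{align}
\sum_{j=1}^{p}(x_j^2 - y_j^2) - \sum_{j=p+1}^{r}(x_j^2 - y_j^2) + x_{r+1} &= 0, \label{eq:parRe}\\
2\sum_{j=1}^{p} x_j y_j - 2\sum_{j=p+1}^{r} x_j y_j + y_{r+1} &= 0. \label{eq:parIm}
\end{align}
Since $f$ does not involve $z_{r+2},\ldots,z_n$, the real parts $x_{r+2},\ldots,x_n$ are free, and consequently $y_{r+2},\ldots,y_n$ play no role at all in the solvability.

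Next I would observe that $x_{r+1}$ enters \eqref{eq:parRe} linearly with nonzero coefficient; hence, once $x_1,\ldots,x_r$ are chosen arbitrarily, \eqref{eq:parRe} can always be satisfied by a unique $x_{r+1}\in\R$. Therefore the membership $\mathbf{y}\in\mathcal{I}(f)$ reduces to the solvability of the single linear equation~\eqref{eq:parIm} in the unknowns $x_1,\ldots,x_r$, namely
\[
  \sum_{j=1}^{r} c_j x_j \ = \ -\tfrac{1}{2} y_{r+1}, \qquad \text{where } c_j = y_j \text{ for } j\le p \text{ and } c_j = -y_j \text{ for } j>p.
\]

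A straightforward case distinction then finishes the argument. If $(y_1,\ldots,y_r) \neq 0$, the coefficient vector is nonzero and the linear equation is solvable for any right-hand side, so $\mathbf{y}\in\mathcal{I}(f)$. If $(y_1,\ldots,y_r)=0$, the left-hand side vanishes identically, and solvability is equivalent to $y_{r+1}=0$; in that case $x_1=\cdots=x_{r+1}=0$ works. Hence $\mathbf{y}\notin\mathcal{I}(f)$ precisely when $y_1=\cdots=y_r=0$ and $y_{r+1}\neq0$ (with $y_{r+2},\ldots,y_n$ arbitrary), which is exactly the claimed description of the complement. No serious obstacle is anticipated; the main point is simply to recognize that the linearity of $z_{r+1}$ in $f$ collapses the problem to a single linear equation in the remaining $x$-variables.
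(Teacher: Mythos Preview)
Your proof is correct and follows essentially the same approach as the paper: decompose $f(\mathbf{x}+i\mathbf{y})=0$ into real and imaginary parts, use the linearity of $x_{r+1}$ to dispose of the real-part equation, and reduce membership in $\mathcal{I}(f)$ to the solvability of the linear imaginary-part equation in $x_1,\ldots,x_r$, which depends only on whether $(y_1,\ldots,y_r)$ vanishes. The only cosmetic difference is that the paper first assumes $n=r+1$ rather than carrying the inert variables $z_{r+2},\ldots,z_n$ through the argument.
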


\begin{proof}We can assume $n=r+1$.
  In the system for the real and the imaginary parts
	\begin{eqnarray}
	\sum_{j=1}^p x_j^2-\sum_{j=p+1}^{n-1} x_j^2-\sum_{j=1}^p y_j^2 + \sum_{j=p+1}^{n-1} y_j^2+x_n=0 \ , \label{eq:hyperbolic-parabolic-3-1}\\
	2 \sum_{j=1}^p x_jy_j - 2 \sum_{j=p+1}^{n-1} x_jy_j+y_n=0 \ , \label{eq:hyperbolic-parabolic-3-2}\end{eqnarray}
consider a fixed $(y_1,\ldots,y_n)\in\R^n$. 
If $(y_1, \ldots, y_{n-1}) \neq \mathbf{0}$, then we can choose
$(x_1,\ldots,x_{n-1})\in\R^n$ such that \eqref{eq:hyperbolic-parabolic-3-2} is satisfied. 
Since~\eqref{eq:hyperbolic-parabolic-3-1} is linear in $x_n$, it has a real solution for $x_n$.
In the special case $(y_1, \ldots, y_{n-1}) = \mathbf{0}$, we see that
$\mathbf{y} = (y_1, \ldots, y_n) \in \mathcal{I}(f)$ if and only if $y_n = 0$.  
\end{proof}

Lemmas~\ref{le:transf2} and~\ref{le:transf1} 
also provide a statement about the existence of unbounded components in 
the complement.

\begin{thm} Let $f \in \C[\mathbf{z}]$.
\begin{enumerate}
\item The complement of $\mathcal{I}(f)$ contains the non-negative $y_1$-axis 
$\R_{\ge 0} \times \{0\}^{n-1}$ 
if and only if the polynomial $f(z_1 + ir, z_2, \ldots, z_n)$ 
has no real solution in $\mathbf{z}$ for any $r \ge 0$.
\item $\overline{\mathcal{I}(f)}$ has an unbounded component in the complement if and only if there is an
  affine transformation $\mathbf{z} \mapsto A \mathbf{z} + i\mathbf{b}$ with a 
  real matrix $A$ and a real vector $\mathbf{b}$ such that condition $(1)$ is satisfied.
\end{enumerate}
\end{thm}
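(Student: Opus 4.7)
Part~(1) is a direct unpacking of the definition of the imaginary projection: the containment $\R_{\ge 0}\times\{0\}^{n-1}\subseteq\mathcal{I}(f)$ holds precisely when, for every $r\ge 0$, there exists $\mathbf{x}\in\R^n$ with $f(x_1+ir,x_2,\ldots,x_n)=0$, which is the same as the statement that the polynomial $f(z_1+ir,z_2,\ldots,z_n)$ has a real zero in $\mathbf{z}$. So part~(1) requires no further input beyond the definition of $\mathcal{I}(f)$.

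For part~(2), the plan is to combine Corollary~\ref{cor:ray-in-unbounded-complement} with Lemmas~\ref{le:transf2} and~\ref{le:transf1} in order to transport a ray in an unbounded complement component onto the non-negative $y_1$-axis. In the ``only if'' direction, let $C$ be an unbounded component of the complement of $\mathcal{I}(f)$. By Corollary~\ref{cor:ray-in-unbounded-complement}, $C$ contains a ray of the form $\{\mathbf{a}+t\mathbf{v}:t\ge 0\}$ with $\mathbf{0}\neq\mathbf{v}\in\R^n$. I would choose an invertible real matrix $A\in\R^{n\times n}$ whose first column is $\mathbf{v}$, set $\mathbf{b}:=\mathbf{a}$, and define $g(\mathbf{z}):=f(A\mathbf{z}+i\mathbf{b})$. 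Combining Lemmas~\ref{le:transf2} and~\ref{le:transf1} then yields $\mathcal{I}(g)=A^{-1}(\mathcal{I}(f)-\mathbf{b})$, so the induced real affine bijection $\mathbf{y}\mapsto A^{-1}(\mathbf{y}-\mathbf{b})$ maps the chosen ray onto the non-negative $y_1$-axis; via part~(1), this is exactly the transformed instance of the desired condition for $g$. The converse direction runs in reverse: given such $A$ and $\mathbf{b}$, the non-negative $y_1$-axis lies in the complement of $\mathcal{I}(g)$, and since this ray is convex, it is contained in a single (necessarily unbounded) convex component of that complement by Theorem~\ref{th:convex}; the inverse affine bijection then transports this component back to an unbounded complement component of $\mathcal{I}(f)$.

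The main obstacle is the bookkeeping of how the combined map $\mathbf{z}\mapsto A\mathbf{z}+i\mathbf{b}$ acts on the imaginary projection: the two existing lemmas handle the real linear substitution and the imaginary translation separately, so one needs to compose them correctly and keep careful track of preimage versus image. Beyond this step, the entire structural content of the argument is exhausted by the existence of a ray inside any unbounded convex set (Corollary~\ref{cor:ray-in-unbounded-complement}) together with the convexity of complement components (Theorem~\ref{th:convex}).
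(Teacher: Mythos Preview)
Your proposal is correct and follows essentially the same route as the paper's own proof: part~(1) is immediate from the definition, and part~(2) reduces, via Corollary~\ref{cor:ray-in-unbounded-complement}, the existence of an unbounded complement component to the existence of a ray in the complement, which is then moved onto the non-negative $y_1$-axis by a real affine change of variables governed by Lemmas~\ref{le:transf2} and~\ref{le:transf1}. You supply more detail than the paper (the explicit choice of $A$ with first column $\mathbf{v}$ and $\mathbf{b}=\mathbf{a}$, the verification that $\mathcal{I}(g)=A^{-1}(\mathcal{I}(f)-\mathbf{b})$, and the appeal to Theorem~\ref{th:convex} to place the ray inside a single component for the converse), but the strategy is identical.
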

\begin{proof}
The first statement immediately follows from the definition of 
$\mathcal{I}(f)$. For the second statement, 
Corollary~\ref{cor:ray-in-unbounded-complement} implies that
the existence of an unbounded component in the complement is equivalent to
the existence of a ray in the complement.
By the Lemmas~\ref{le:transf2} and~\ref{le:transf1}, the affine
transformation reduces the situation to (1).
\end{proof}

\subsection*{Multilinear polynomials}

We study the imaginary projection of multilinear polynomials
(in the sense of multi-affine-linear). 
Br\"and\'{e}n's stability result for this class was given in 
Theorem~\ref{th:braenden1}. The next statement
describes the imaginary projection of bivariate multilinear polynomials; 
see the right picture in Figure~\ref{fi:top1} for an example. 

\begin{thm}\label{thm_hyperbola}
        Let $f(z_1,z_2) = z_1z_2 + \beta z_1 + \gamma z_2 + \delta$ be a multilinear
        polynomial with $\beta, \gamma, \delta \in \R$.
        Then
        \[
          \mathcal{I}(f) \ = \ \Big\{\mathbf{y} \in \R^2 \, : \, 0 < \frac{y_1 y_2}{\delta - \beta \gamma} \le 1 \Big\} \cup \{ \mathbf{0} \} 
           \qquad \text{for } \delta - \beta \gamma \, \neq \, 0 \, .
        \]
        In the special case $\delta = \beta \gamma$,
        the multilinear polynomial is reducible
        and thus
        $\mathcal{I}(f) = \mathcal{I}(z_1 + \gamma) \cup \mathcal{I}(z_2 + \beta)
        = \mathcal{I}(z_1) \cup \mathcal{I}(z_2) =
        (\R \times \{0\}) \cup (\{0\} \times \R)$.
\end{thm}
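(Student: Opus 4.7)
The plan is to first apply a real translation that simplifies $f$. By Lemma~\ref{le:transf1}, the shift $z_1 \mapsto z_1 - \gamma$, $z_2 \mapsto z_2 - \beta$ preserves the imaginary projection, and a direct expansion shows that it carries $f$ to the much simpler polynomial $\tilde f(z_1, z_2) = z_1 z_2 + c$, where $c := \delta - \beta\gamma$. Thus I may reduce the entire problem to understanding $\mathcal{I}(z_1 z_2 + c)$.

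Next, I would split $\tilde f(\mathbf{z}) = 0$ into its real and imaginary parts with respect to $z_j = x_j + i y_j$, obtaining the decoupled system
\begin{align*}
x_1 x_2 - y_1 y_2 + c &\ = \ 0, \\
x_1 y_2 + x_2 y_1 &\ = \ 0.
\end{align*}
For a given $(y_1, y_2) \in \R^2$, the point lies in $\mathcal{I}(\tilde f)$ iff this system admits a real solution in $(x_1, x_2)$. A short case distinction on the vanishing of $y_1$ and $y_2$ handles the degenerate situations: the origin always lies in $\mathcal{I}(\tilde f)$, while if exactly one of $y_1, y_2$ is zero the imaginary equation forces the corresponding $x$-variable to vanish and the real equation collapses to $c = 0$; hence in the generic case $c \neq 0$ the axes contribute only the origin.

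The main case is $y_1 y_2 \neq 0$. Here I would eliminate $x_1 = -x_2 y_1/y_2$ from the imaginary equation and substitute into the real equation to obtain
\[
  x_2^2 \ = \ \frac{y_2(c - y_1 y_2)}{y_1}.
\]
Real solvability is then equivalent to $y_1 y_2 (c - y_1 y_2) \ge 0$, i.e.\ $t(c - t) \ge 0$ for $t := y_1 y_2 \neq 0$. A quick split into the two sign cases of $c$ shows that this is equivalent to the uniform inequality $0 < t/c \le 1$. This gives precisely the set claimed in the theorem, and adding back the origin completes the description of $\mathcal{I}(f)$ when $\delta \neq \beta\gamma$.

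For the special case $\delta = \beta\gamma$ the polynomial factors as $f = (z_1 + \gamma)(z_2 + \beta)$; combining Lemma~\ref{lemma:imaginary-projection-product} with Theorem~\ref{thm:linear-case-imaginary-projection} then yields the union of the two coordinate axes. I expect no serious obstacles: the preliminary translation trivializes the algebra, and the only real bookkeeping is folding both signs of $c$ into the single inequality $0 < y_1 y_2/(\delta - \beta\gamma) \le 1$.
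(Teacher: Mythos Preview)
Your proof is correct. Both your argument and the paper's begin identically, using the real translation of Lemma~\ref{le:transf1} to reduce to $g(z_1,z_2)=z_1z_2+c$ with $c=\delta-\beta\gamma$. After that the routes diverge. The paper applies a further real linear substitution $z_1=z_1'+z_2'$, $z_2=(\beta\gamma-\delta)(z_1'-z_2')$ to bring $g$ into the standard hyperbola form $(\beta\gamma-\delta)(z_1'^2-z_2'^2-1)$, invokes the quadric classification Theorem~\ref{th:quadricr2} for type~$(ii)$, and then pulls the result back via Lemma~\ref{le:transf2}. You instead split $g=0$ into real and imaginary parts and solve directly for $(x_1,x_2)$. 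Your approach is more elementary and self-contained, avoiding both the quadric classification and the back-transformation bookkeeping; the paper's approach has the virtue of displaying the multilinear case as an instance of the quadric classification already established. Interestingly, the paper itself essentially reproduces your direct computation later, in the example following Corollary~\ref{cor:imag-proj-of-multiaffine-poly}, as an illustration of that determinantal criterion.
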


As a consequence, we rediscover that the multilinear polynomial $f$ is stable 
if and only if
$\beta \gamma - \delta \ge 0$, see Theorem~\ref{th:braenden1}.

\begin{proof}
Since $f$ can be written as $f(z_1,z_2) = (z_1 + \gamma)(z_2 + \beta) + \delta - \beta \gamma$,
Lemma~\ref{le:transf1} implies that $\mathcal{I}(f) = \mathcal{I}(g)$ where
$g(z_1,z_2) = z_1 z_2 + \delta - \beta \gamma$.
Substituting $z_1 = z_1' + z_2'$ and $z_2 = (\beta \gamma - \delta)(z_1' - z_2')$,
we can express $g$ as $g(z_1',z_2') = (\beta \gamma - \delta)(z_1^2 - z_2^2 -1)$, and
by~Theorem~\ref{th:quadricr2},
the imaginary projection of $g$ with respect to the $\mathbf{z'}$-variables is
\[
  \{\mathbf{y'} \in \R^2 \, : \, -1 \le (y'_1)^2 - (y'_2)^2 < 0 \} \cup \{ \mathbf{0} \} \, .
\]
Using $\begin{pmatrix} 1 & 1 \\ \beta \gamma - \delta & -(\beta \gamma - \delta)
\end{pmatrix}^{-1} 
= \frac{1}{2} \begin{pmatrix} 1 & 1/(\beta \gamma - \delta) \\ 1 & -1/(\beta \gamma - \delta) \end{pmatrix}$,
transforming back to the $\mathbf{z}$-va\-ri\-ables with Lemma~\ref{le:transf2} yields the claim.
\end{proof}

For the case of $n$-dimensional multilinear polynomials, we provide the
subsequent, less explicit, characterization of the imaginary projection, and more generally,
of polynomials of the form $f=g+z_{n+1}h\in\C[\mathbf{z},z_{n+1}]$ with 
$g, h \in \C[\mathbf{z}]$.

\begin{lemma}
Let $f=g+z_{n+1}h\in\C[\mathbf{z},z_{n+1}]$ and $v \in \R$. 
A point $(\mathbf{z},z_{n+1})$ with $\Im z_{n+1} = v$ and $h(\mathbf{z}) \neq 0$
is contained in $\mathcal{V}(f)$ if and only if the determinant
\begin{equation}
\label{det-condition}
	\det\left(\begin{matrix}
	\Re g- v \Im h & \Re h\\ \Im g +  v \Re h & \Im h
	\end{matrix}\right)
\end{equation}
vanishes in $\mathbf{z}$.
\end{lemma}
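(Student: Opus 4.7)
The approach is to view the equation $f = g + z_{n+1} h = 0$, constrained to $\Im z_{n+1} = v$, as a real-linear system in the single remaining unknown $u := \Re z_{n+1}$. Writing $z_{n+1} = u + iv$ and splitting $f(\mathbf{z},z_{n+1}) = 0$ into its real and imaginary parts produces
\begin{align*}
  u \, \Re h & = v \Im h - \Re g \, , \\
  u \, \Im h & = -v \Re h - \Im g \, .
\end{align*}
This is a two-equation, one-unknown real system with coefficient column $(\Re h, \Im h)^{T}$ and right-hand side $(v\Im h - \Re g,\ -v\Re h - \Im g)^{T}$.

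Provided the coefficient column is non-zero, elementary linear algebra says that such a system has a (necessarily unique, real) solution $u$ if and only if the augmented $2 \times 2$ matrix
\[
  \begin{pmatrix} \Re h & v\Im h - \Re g \\ \Im h & -v\Re h - \Im g \end{pmatrix}
\]
is singular. A direct expansion of its determinant yields $\Re g \, \Im h - \Im g \, \Re h - v |h|^{2}$, and expanding the determinant in~\eqref{det-condition} gives the same expression (the two matrices differ only by a column swap together with a global sign flip of one column). Hence the vanishing of the determinant in the statement is equivalent to the existence of the desired $u$, which proves the claim in the generic case.

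The only point that warrants separate comment is the degenerate locus $h(\mathbf{z}) = 0$: there the coefficient column vanishes, the stated determinant is automatically zero, and the existence of a $z_{n+1}$ with $\Im z_{n+1} = v$ and $f(\mathbf{z},z_{n+1})=0$ collapses to the condition $g(\mathbf{z}) = 0$. This is a minor bookkeeping issue rather than a genuine obstacle, since the substantive content of the proof is just the translation of the complex equation $f = 0$ into a real two-by-one linear system whose solvability is encoded by a single $2 \times 2$ determinant. The hard part, if any, is simply verifying that the determinant one gets from the augmented matrix is exactly the one written in~\eqref{det-condition}, which is a one-line calculation.
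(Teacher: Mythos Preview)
Your argument is correct and is essentially identical to the paper's: write $z_{n+1}=u+iv$, split $f=0$ into real and imaginary parts, and observe that the resulting two real equations are linear in $u$, so solvability is the linear-dependence condition encoded by the $2\times 2$ determinant. You are in fact slightly more careful than the paper, which does not comment on the degenerate locus $h(\mathbf{z})=0$; your observation that the stated equivalence is only literally correct there when additionally $g(\mathbf{z})=0$ is accurate, and the paper's proof tacitly assumes the coefficient column is non-zero.
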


\begin{proof}
Writing $z_{n+1} = u+iv$, the conditions $\Re f=0$ and $\Im f=0$ give
	\[
	\left(\begin{matrix}\Re g\\ \Im g\end{matrix}\right)+\left(\begin{matrix}\Re h & -\Im h\\ \Im h & \Re h\end{matrix}\right)\left(\begin{matrix}u\\ v\end{matrix}\right)=\left(\begin{matrix}\Re g-v\Im h\\ \Im g+v\Re h\end{matrix}\right)+u\left(\begin{matrix}\Re h\\ \Im h\end{matrix}\right) \ = \ \mathbf{0} \, .
	\]
Considering this equation as a linear equation in $u$ shows that there
exists a solution if and only if the coefficient vector and the constant vector
are linearly dependent, that is, if and only 
the determinant~\eqref{det-condition} vanishes.
\end{proof}

We obtain the following corollary.

\begin{cor}\label{cor:imag-proj-of-multiaffine-poly}
Let $f=g+z_{n+1}h\in\C[\mathbf{z},z_{n+1}]$. 
Then, writing $\mathbf{z} = \mathbf{x} + i \mathbf{y}$,
the sets $\mathcal{I}(g+z_{n+1}h)$ and
\begin{equation} \label{det-condition2}
  \left\{(\mathbf{y},v)\in\R^{n+1} \, : \, 
\exists \mathbf{x} \in \R^n \text{ with } \, \det\left(\begin{matrix}\Re g-v\Im h & \Re h\\ \Im g+v\Re h & \Im h\end{matrix}\right)=0\right\}
\end{equation}
coincide outside of the exceptional set $E = \{ \Im((\mathbf{z},z_{n+1})) \, : \, h(\mathbf{z}) = 0 \text{ and } g(\mathbf{z}) \neq 0 \}$.
\end{cor}

We observe that the determinantal condition in~\eqref{det-condition}
and~\eqref{det-condition2} gives a linear condition in $v$. 
For a multilinear polynomial of the form 
$f=g+z_{n+1}h\in\R[\mathbf{z}, z_{n+1}]$ 
with $g$ and $h$ multilinear, the condition is quadratic in
any of the variables $\mathbf{x} = (x_1, \ldots, x_n)$ and
 $\mathbf{y} = (y_1, \ldots, y_n)$.

\begin{example}
We revisit the multilinear polynomial $f(z_1,z_2)=z_1z_2+\delta$, $\delta\in\R \setminus \{0\}$ to illustrate Corollary \ref{cor:imag-proj-of-multiaffine-poly}; see Theorem~\ref{thm_hyperbola}. 
Setting $g=\delta$ and $h=z_1$, the determinantal condition~\eqref{det-condition} 
gives (where we write $y_2$ instead of $v$)
\[
	\delta y_1-y_1^2y_2-x_1^2y_2 \ = \ 0 \, .
\]
For $y_2 \neq 0$, there exists a real solution for $x_1$ if and only if
$\frac{y_1}{y_2}(\delta-y_1y_2)\geq0$. Taking into account the
exceptional set $E = \{0\} \times \R$, we obtain
$\mathcal{I}(f) = \{ \mathbf{y} \in \R^2 \, : \,
0 < \frac{y_1 y_2}{\delta} \le 1\} \cup \{ \mathbf{0} \}$,
in accordance with Theorem~\ref{thm_hyperbola}.
\end{example}

\begin{example}
We consider the non-multilinear polynomial
$f(z_1,z_2) = 1+z_2 z_1^2$, which is of the form $f = g + z_2 h$ with
$g=1$ and $h=z_1^2$. Corollary \ref{cor:imag-proj-of-multiaffine-poly}
gives the quartic condition in the variable $x_1$
\begin{equation}
  \label{eq:quarticcond}
  - y_2 x_1^4 - 2 y_1^2 y_2 x_1^2 + 2 y_1 x_1 - y_2 y_1^4 \ = \ 0 \, .
\end{equation}
Recall that the discriminant of a general polynomial
$p(z) = \sum_{j=0}^n a_j z^j$ is given
by $\Disc(p) = (-1)^{\frac{1}{2}n(n-1)}\frac{1}{a_n} \Res(p,p')$,
where $\Res$ denotes the resultant. For a quartic, a positive discriminant
corresponds to zero or four real roots, while a negative discriminant
corresponds to two real roots. Moreover, with the notation
\[
  H \ = \ 8 a_2 a_4 - 3 a_3^2 \, , \quad
  I \ = \ 12 a_0 a_4 - 3 a_1 a_3 + a_2^2 \, ,
\]
the case of four real roots corresponds to $H \le 0$ and $H^2 - 16 a_4^2 I \ge 0$,
while the case of four complex roots corresponds to  
$H > 0$ or $H^2 - 16 a_4^2 I < 0$, see, e.g., \cite[Proposition 7]{cremona-99}.
In our situation, $p = p(x_1)$ is the polynomial in~\eqref{eq:quarticcond}, 
$\Disc(p) = 16 (64 y_1^4 y_2^2 - 27) y_1^4 y_2^2$,
$H = 16 y_1^2 y_2^2$ and $H^2 - 16 a_4^2 I = 0$. The set of points $\mathbf{y} \in \R^2$,
where~\eqref{eq:quarticcond} has at least two real solutions in $x_1$,
is given by $64 y_1^4 y_2^2 \le 27$. Taking into account the exceptional
set $E = \{0\} \times \R$ gives
\[
  \mathcal{I}(f) \ = \ \{ \mathbf{y} \in \R^2 \, : \,
    0 < 64 y_1^4 y_2^2 \le 27 \} \cup (\R \times \{0\}) \, .
\]
\end{example}

We will return to multilinear polynomials when studying their asymptotic geometry
in Theorem~\ref{thm:limits-deg-n}.

\section{The limit set of imaginary projections}\label{sec:infinity}

For the amoeba $\mathcal{A}(f)$ of a polynomial $f$ it is well-known that the set
of limit points of points in $\frac{1}{r} \mathcal{A}(f) \cap \Sph^{n-1}$, 
where $r > 0$ tends to infinity,
is a spherical polyhedral complex. It is called the \emph{logarithmic limit set}
\[
  \mathcal{A}_\infty(f) \ = \ 
  \lim_{r\rightarrow\infty}\left(\frac{1}{r}\mathcal{A}(f)\cap\Sph^{n-1}\right)
\]
and provides
one way of defining a tropical hypersurface; see, e.g., \cite[Section 1.4]{maclagan-sturmfels-book}.

For imaginary projections, the situation is different from amoebas,
as shown by the following counterexample: For $f \in \C[\mathbf{z}]$,
	\[\mathcal{I}_\infty(f) \ = \ \lim_{r\rightarrow\infty}\left(\frac{1}{r}\mathcal{I}(f)\cap \Sph^{n-1}\right)\]
	is not a spherical polyhedral complex in general.

\begin{example}
	Let $f(\mathbf{z})=z_1^2 - \sum_{j=2}^n z_j^2 +1$ with 
$n \ge 3$. Then, by Theorem~\ref{th:quadrics}, 
	\[
	\mathcal{I}(f) \ = \{ \mathbf{y} \in \R^n \, : \, y_1^2 - \sum_{j=2}^n y_j^2 \le 1 \} \, .
\]
	Therefore, $\mathcal{I}_\infty(f) = 
      \lim_{r \to \infty} \big \{ \mathbf{y} \in \Sph^{n-1} \, : \,
          (ry_1)^2 - \sum_{j=2}^n (ry_j)^2 \le 1 \big\}$
can be written as
\[
  \mathcal{I}_{\infty}(f) \ = \
   \Big\{ \mathbf{y} \in \Sph^{n-1} \, : \, y_1^2 \le \sum_{j=2}^n y_j^2 \Big\} 
  \ = \ \Big\{ \mathbf{y} \in \Sph^{n-1} \, : \, y_1^2 \le \frac{1}{2} \Big\} \, .
\]
    Since $n \ge 3$, this cannot be written as the intersection of $\Sph^{n-1}$ with a polyhedral
    fan. Hence, $\mathcal{I}_{\infty}(f)$ is not a spherical polyhedral complex, and since 
    $\mathcal{I}_{\infty}(f)$ is already closed, this persists under taking the closure.
\end{example}

\begin{definition}
	Let $f\in\C[\mathbf{z}]$. We call a point $p\in\R^n$ 
      a \emph{limit direction} of the imaginary projection of $f$ if $p\in\mathcal{I}_\infty(f)$.
\end{definition}

Unless $f$ is univariate or constant, $f$ has at least one limit
direction. 
Namely, for 
any integer $N > 0$ and 
$(z_1, \ldots, z_{n-1}) \in \C^{n-1}$ such that
$\|(\Im(z_1), \ldots, \Im(z_{n-1}))\|_2 > N$ and
$f(z_1, \ldots, z_{n-1},z_n) \in \C[z_n]$ is not a non-zero constant,
there exists a $z_n \in \C$ with $f(z_1, \ldots, z_{n-1}, z_n) = 0$.
The resulting sequence of points $\mathbf{z}_N$ induces a sequence
of points
$\Im(\mathbf{z}_N)/\|\Im(\mathbf{z}_N)\|$ on the unit sphere
$\Sph^{n-1}$. By compactness, there exists a convergent subsequence.
If $f$ has real coefficients, then, by Remark~\ref{re:symm}, the limit
directions are symmetric with respect to the origin.

In Theorem~\ref{thm:limits-deg-n} and Corollary~\ref{co:multilin-unbounded}, we deal 
with the limit directions of multilinear polynomials. Then, in 
Theorem~\ref{thm:limit-directions-general-case}
and Corollary~\ref{co:bivar-limit}, we provide criteria for one-dimensional families
of limit directions, which means in the case $n=2$ that every point on $\Sph^1$ is a limit
direction of the imaginary projection.

\begin{thm}\label{thm:limits-deg-n}
	Let $f\in\C[\mathbf{z}]$ be a multilinear polynomial, and assume that the 
monomial $z_1\cdots z_n$ appears in $f$, i.e., $\deg(f)=n$. Then the limit directions of
$\mathcal{I}(f)$ are given by $\Sph^{n-1} \cap \mathcal{H}$, where $\mathcal{H}$
is the union of the $n$ coordinate hyperplanes $\{ \mathbf{y} \in \R^n \, : \, y_j = 0\}$, $1 \le j \le n$.
\end{thm}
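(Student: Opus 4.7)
The plan is to prove both inclusions $\mathcal{I}_\infty(f) \subseteq \mathcal{H} \cap \Sph^{n-1}$ and $\mathcal{H} \cap \Sph^{n-1} \subseteq \mathcal{I}_\infty(f)$ separately.

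Write $f(\mathbf{z}) = \sum_{S \subseteq [n]} c_S \prod_{j \in S} z_j$ with $c_{[n]} \neq 0$. For the \emph{upper bound}, I would factor out the top monomial and estimate,
\[
  f(\mathbf{z}) \ = \ c_{[n]} \prod_{j=1}^n z_j \cdot \Big(1 + \sum_{S \subsetneq [n]} \frac{c_S}{c_{[n]}} \prod_{j \in [n] \setminus S} z_j^{-1}\Big).
\]
Every summand in the inner sum carries at least one factor $z_j^{-1}$, so the sum is bounded in modulus by $C/\min_j |z_j|$ for a suitable constant $C > 0$ depending only on the coefficients. Consequently there is a threshold $K > 0$ such that $\min_j |z_j| \ge K$ forces the parenthesized expression to have modulus at least $1/2$ and therefore $f(\mathbf{z}) \neq 0$. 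Using $|x_j + i y_j| \ge |y_j|$, whenever $\min_j |y_j| \ge K$ no real $\mathbf{x}$ can satisfy $f(\mathbf{x} + i \mathbf{y}) = 0$, which yields $\mathcal{I}(f) \subseteq \bigcup_{j=1}^n \{|y_j| \le K\}$. Rescaling by $1/r$ and letting $r \to \infty$ shows $\mathcal{I}_\infty(f) \subseteq \bigcup_{j=1}^n \{y_j = 0\} \cap \Sph^{n-1} = \mathcal{H} \cap \Sph^{n-1}$.

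For the \emph{lower bound}, fix $\mathbf{p} \in \mathcal{H} \cap \Sph^{n-1}$ and an index $j_0$ with $p_{j_0} = 0$; after relabeling coordinates assume $j_0 = n$. Split $f = A(\mathbf{z}')\, z_n + B(\mathbf{z}')$ in the variables $\mathbf{z}' = (z_1, \ldots, z_{n-1})$, where $A$ and $B$ are multilinear and the coefficient of the top monomial of $A$ is $c_{[n]} \neq 0$. Let $S = \{j \in [n-1] : p_j \neq 0\}$, which is non-empty because $\|\mathbf{p}\| = 1$ and $p_n = 0$. For parameters $r > 0$ and real numbers $t_j$ ($j \in [n-1] \setminus S$) set
\[
  z_j \ = \ i r p_j \quad (j \in S), \qquad z_j \ = \ t_j \quad (j \in [n-1] \setminus S), \qquad z_n \ = \ -\frac{B(\mathbf{z}')}{A(\mathbf{z}')}.
\]
Grouping terms of $A$ by the power of $r$ produces the expansion
\[
  A(\mathbf{z}') \ = \ (ir)^{|S|} \Big(\prod_{j \in S} p_j\Big) \, Q_A(\mathbf{t}) + O(r^{|S|-1}),
\]
where $Q_A(\mathbf{t}) = \sum_{U \subseteq [n-1] \setminus S} c_{S \cup U \cup \{n\}} \prod_{j \in U} t_j$ has top coefficient $c_{[n]} \neq 0$; in particular $Q_A \not\equiv 0$, so I may pick real $\mathbf{t}$ with $Q_A(\mathbf{t}) \neq 0$. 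An analogous expansion of $B$ produces a polynomial $Q_B$ in the same variables, and together they give $z_n \to -Q_B(\mathbf{t})/Q_A(\mathbf{t})$ as $r \to \infty$. Hence $\Im z_n$ stays bounded while $\Im z_j = r p_j$ for $j \in S$ and $\Im z_j = 0$ for $j \in [n-1] \setminus S$, so $\Im(\mathbf{z})/r \to \mathbf{p}$, which proves $\mathbf{p} \in \mathcal{I}_\infty(f)$.

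The main obstacle is the lower bound when $\mathbf{p}$ has several vanishing coordinates: one must insert real auxiliary parameters $t_j$ along the zero directions to keep the leading $r$-order of $A$ and $B$ aligned, and then exploit that $c_{[n]}$ is the top coefficient of $Q_A$ to secure a choice of $\mathbf{t}$ with $Q_A(\mathbf{t}) \neq 0$. The upper bound, by contrast, is just a quantitative version of the observation that $|f|$ grows like $\prod_j |z_j|$ once each $|z_j|$ is large.
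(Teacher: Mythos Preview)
Your argument is correct, and it proceeds along a genuinely different route from the paper's.

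The paper argues projectively: since $f$ is multilinear with top monomial $z_1\cdots z_n$, the homogenization satisfies $f_h(0,z_1,\ldots,z_n)=c_{[n]}z_1\cdots z_n$, so the points at infinity of $\mathcal{V}(f)$ are exactly $\{z_1\cdots z_n=0\}$; from this the paper reads off that the asymptotic cone of $\mathcal{V}(f)$ is the union of the complex coordinate hyperplanes and then passes to imaginary parts. Your proof instead handles both inclusions by hand: the upper bound is an explicit estimate showing that $f$ cannot vanish once every $|y_j|$ exceeds a fixed threshold, and the lower bound builds, for each unit vector $\mathbf{p}$ with $p_{j_0}=0$, a one-parameter family of zeros whose imaginary parts have direction tending to $\mathbf{p}$.

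What each approach buys: the paper's projective observation is short and conceptual, and it makes transparent \emph{why} the coordinate hyperplanes are the relevant asymptotic directions (they are literally the hyperplane section at infinity); it also dovetails with the paper's later Theorem~\ref{thm:limit-directions-general-case}. Your approach is more elementary and, importantly, self-contained at the step the paper treats as obvious---namely, that limit directions of $\mathcal{I}(f)$ coincide with the imaginary projection of the limit directions of $\mathcal{V}(f)$. Your explicit construction (solving for $z_n=-B/A$ and controlling the leading $r$-order via the auxiliary real parameters $t_j$ when several coordinates of $\mathbf{p}$ vanish) supplies precisely the ingredient that this passage requires. A minor point: your bound $C/\min_j|z_j|$ for the tail sum tacitly uses $\min_j|z_j|\ge 1$, but since you only need it for $\min_j|z_j|\ge K$ with $K$ large this is harmless.
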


\begin{proof}
Homogenizing $f$ to $f_h(z_0,z_1,\ldots,z_n)$, the homogeneous polynomial 
$f_h$ has a zero at infinity, i.e., $(0,z_1, \ldots, z_n) \in \mathcal{V}(f_h)$,
if and only if $z_1\cdots z_n=0$. Hence, the set of limit points of points in 
$\frac{1}{r} \mathcal{V}(f)$, $r \to \infty$, is
$\mathcal{V}(z_1 \cdots z_n)$. The imaginary projections of the $n$ hyperplanes 
$\{ \mathbf{z} \in \C^n \, : \, z_j = 0\}$ then imply the claim. 
\end{proof}

Theorem ~\ref{thm:limits-deg-n} allows one to characterize the number of unbounded 
components in the complement of the imaginary projection of multilinear polynomials.

\begin{cor}\label{co:multilin-unbounded}
	Let $f\in\C[\mathbf{z}]$ be a multilinear polynomial, and assume that the monomial 
$z_1\cdots z_n$ appears in $f$, i.e., $\deg(f)=n$. Then the complement of 
$\mathcal{I}(f)$ contains exactly $2^n$ unbounded components.
\end{cor}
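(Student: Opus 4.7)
The plan is to exhibit a bijection between the $2^n$ sign patterns $\epsilon \in \{+1,-1\}^n$ and the unbounded components of $\R^n\setminus\mathcal{I}(f)$ via $\epsilon \mapsto C_\epsilon$, where $C_\epsilon$ is the unbounded component containing the far part of the ray $\{r\epsilon : r\ge 0\}$. Since $\epsilon/\sqrt{n}$ has all coordinates nonzero, Theorem~\ref{thm:limits-deg-n} gives $\epsilon/\sqrt{n} \notin \mathcal{I}_\infty(f)$, so there is $R_\epsilon > 0$ with $\{r\epsilon : r \ge R_\epsilon\} \subseteq \R^n\setminus\mathcal{I}(f)$, and this ray lies in a unique unbounded component $C_\epsilon$.

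The main task is to show $C_\epsilon \ne C_{\epsilon'}$ whenever $\epsilon \ne \epsilon'$. Set $J = \{j : \epsilon_j \ne \epsilon'_j\}$ and $\bar J = [n]\setminus J$ and suppose $C_\epsilon = C_{\epsilon'} =: C$. By Theorem~\ref{th:convex} the two-dimensional wedge $W = \{a\epsilon + b\epsilon' : a,b\ge 0,\ a+b\ge L\}$ lies in $C$ for $L$ large; writing $s = a+b$ and $u = a-b$, a wedge point has $y_k = s\epsilon_k$ for $k\in\bar J$ and $y_k = u\epsilon_k$ for $k\in J$. To produce a point of $\mathcal{I}(f)\cap W$ I would use the ansatz $\mathbf{x}_{\bar J} = 0$, $\mathbf{x}_J = \lambda\epsilon_J$ with $\lambda\in\R$, and set $w = \lambda+iu$, $v = is$; the equation $f(\mathbf{x}+i\mathbf{y}) = 0$ becomes
\[
  \sum_{S\subseteq[n]} a_S\,\Bigl(\prod_{k\in S}\epsilon_k\Bigr)\,w^{|S\cap J|}\,v^{|S\cap\bar J|} \ = \ 0,
\]
with nonzero top coefficient $a_{[n]}\prod_k\epsilon_k$. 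Dividing by $v^{|\bar J|}$ and letting $s\to\infty$ produces the limit polynomial $P(w) = \sum_{T\subseteq J} a_{T\cup\bar J}\bigl(\prod_{k\in T\cup\bar J}\epsilon_k\bigr)\,w^{|T|}$ of degree $|J|\ge 1$. For each complex root $w^*$ of $P$, by continuity of roots under coefficient perturbation (Rouch\'e around simple roots, a local Puiseux expansion in general), there is a root $w_s = \lambda_s+iu_s$ of $f(\cdot,is)$ with $w_s\to w^*$ as $s\to\infty$. The corresponding $\mathbf{y}_s$ with coordinates $s\epsilon_k$ on $\bar J$ and $u_s\epsilon_k$ on $J$ lies in $\mathcal{I}(f)$; since $u_s$ is bounded, $|u_s| \le s$ for $s$ large, so $\mathbf{y}_s \in W$, the required contradiction.

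Finally I would show every unbounded component equals some $C_\epsilon$. The assignment $\phi$ sending $\mathbf{d}\in\Sph^{n-1}\setminus\mathcal{I}_\infty(f)$ to the component containing $t\mathbf{d}$ for all sufficiently large $t$ is well-defined, and Theorem~\ref{thm:limits-deg-n} together with a compactness argument shows $\phi$ is constant on each open orthant with value $C_\epsilon$. Given an unbounded component $C$, Corollary~\ref{cor:ray-in-unbounded-complement} supplies a ray in $C$; if its direction lies in an open orthant, a straight-line connectivity argument inside that orthant gives $C = C_\epsilon$. Otherwise every ray in $C$ has direction on a coordinate hyperplane, which forces the convex cone $\rec(\cl C)$ into a single hyperplane $\{y_j=0\}$ and confines $C$ to a slab; a partial-infinity version of the above Rouch\'e analysis then produces a point of $\mathcal{I}(f)$ inside $C$, contradicting $C \subseteq \R^n\setminus\mathcal{I}(f)$. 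The main obstacle lies in the Rouch\'e step in the distinctness argument, specifically in handling the case where $P$ has multiple roots: there a local Puiseux expansion of the $|J|$ roots of $f(\cdot,is)$ ensures that at least one of them still converges to any prescribed root of $P$, so the construction of $\mathbf{y}_s$ goes through.
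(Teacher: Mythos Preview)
Your approach and the paper's both rest on Theorem~\ref{thm:limits-deg-n}. The paper's entire argument is two sentences: since the complement of $\mathcal{I}_\infty(f)$ on $\Sph^{n-1}$ has $2^n$ components (the open orthant caps), the complement of $\mathcal{I}(f)$ has exactly $2^n$ unbounded components. The paper treats this last implication as self-evident; your proposal is an honest attempt to unpack it. Your construction of the map $\epsilon\mapsto C_\epsilon$ and the local constancy of $\phi$ on the open orthant caps are correct, and your injectivity argument via the limit polynomial $P(w)$ and continuity of roots is sound and does substantially more work than the paper.

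There is, however, a genuine gap in your surjectivity argument. In the ``otherwise'' branch you assert that $\rec(\cl C)\subseteq\{y_j=0\}$ ``confines $C$ to a slab.'' This is false for convex sets in general: take $C=\{(y_1,y_2)\in\R^2:y_2\ge y_1^2\}$, whose recession cone is $\{0\}\times\R_{\ge 0}\subseteq\{y_1=0\}$, yet the projection of $C$ onto the $y_1$-axis is all of $\R$. So the promised contradiction does not follow from the slab claim, and the vaguely described ``partial-infinity Rouch\'e analysis'' is left without a clear target. To close this branch you would need a different argument---for instance, showing directly that any unbounded convex component of $\R^n\setminus\mathcal{I}(f)$ must contain a ray whose direction lies in an open orthant (thereby eliminating the ``otherwise'' case altogether), or exploiting the semialgebraic structure of $\mathcal{I}(f)$ to rule out parabola-shaped components. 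The paper does not spell out this step either, so your proposal already goes further than the published proof; only this final degenerate case remains to be handled.
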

We remark that this number coincides with the number stated in Theorem \ref{thm:number-complement-components-product-of-linear-polynomials}  when choosing $m=n$.
\begin{proof}
	By Theorem \ref{thm:limits-deg-n},
the complement of
$\mathcal{I}_{\infty}(f)$ consists of $2^n$ components. 
Therefore, the complement of $\mathcal{I}(f)$ has exactly $2^n$ unbounded components.
\end{proof}

\begin{thm}\label{thm:limit-directions-general-case}
        Let $f\in\C[\mathbf{z}]$ be a non-constant polynomial. 
If its homogenization $f_h\in \C[z_0,\mathbf{z}]$ has a zero
$\mathbf{p}_h=(0:p)=(0:p_1:\cdots:p_n)\in\P_\C^{n}$, 
then every point in the intersection $\Sph^{n-1}\cap\mathcal{H}$ is a limit direction, where
$\mathcal{H}=\{\lambda \Re(\mathbf{p})+\mu \Im(\mathbf{p}):\lambda,\mu\in\R\}$
and $\mathbf{p} = (p_1, \ldots, p_n)$.
\end{thm}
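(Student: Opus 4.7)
Plan: For $u \in \Sph^{n-1}\cap \mathcal{H}$ I will explicitly produce a sequence in $\mathcal{V}(f)$ whose imaginary parts diverge in direction $u$. Writing $f = \sum_{j=0}^d f_j$ for the decomposition into homogeneous parts (with $d = \deg f$), the hypothesis that $f_h$ vanishes at $(0:p)$ amounts to $f_d(p) = 0$. Since $u = \lambda\Re(p)+\mu\Im(p) = \Im(\zeta p)$ for $\zeta = \mu + i\lambda$, and $u \neq 0$, $p \neq 0$, we have $\zeta \neq 0$.

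The key construction is to fix a transversal direction $q \in \C^n$ with $f_d(q)\neq 0$ (possible since $f_d$ is a nonzero polynomial) and examine the univariate polynomial
$$ P_t(\sigma) \ = \ (t\zeta)^{-d}\, f\bigl(t\zeta(p+\sigma q)\bigr) \ = \ \sum_{j=0}^{d}(t\zeta)^{j-d}\, f_j(p+\sigma q), $$
obtained by homogeneity of the $f_j$. Coefficient by coefficient in $\sigma$, $P_t$ converges as $t\to\infty$ to $P_\infty(\sigma)=f_d(p+\sigma q)$ with error $O(1/t)$. By construction the leading coefficient of $P_t$ in $\sigma$ is the nonzero constant $f_d(q)$, and $P_\infty$ has $\sigma=0$ as a root of some multiplicity $m\ge 1$ because $f_d(p)=0$.

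Continuity of roots of a monic polynomial under coefficient perturbations (via a Rouch\'e argument on a small disk $\{|\sigma|<\delta\}$) then produces a root $\sigma(t)$ of $P_t$ with $\sigma(t)\to 0$, satisfying the standard estimate $|\sigma(t)| = O(t^{-1/m})$. Therefore $t\,|\sigma(t)|=O(t^{1-1/m})=o(t)$, and setting $\mathbf{z}(t) = t\zeta(p+\sigma(t)q)\in\mathcal{V}(f)$ yields
$$ \Im(\mathbf{z}(t)) \ = \ t\,\Im(\zeta p) + \Im\bigl(t\zeta\sigma(t)\, q\bigr) \ = \ t u + o(t). $$
Hence $\|\Im(\mathbf{z}(t))\|\to\infty$ and $\Im(\mathbf{z}(t))/\|\Im(\mathbf{z}(t))\|\to u$, placing $u$ in $\mathcal{I}_\infty(f)$.

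The main obstacle is the possibility that $p$ is a singular point of $\mathcal{V}(f_d)$: then $\sigma=0$ is a multiple root of $P_\infty$, and a naive perturbation along the real ray $\R_{>0}\cdot \zeta p$ would not by itself yield any zeros of $f$. The transversal perturbation $q$ circumvents this, because the leading coefficient $f_d(q)$ remains a nonzero constant independent of $t$, so the roots of $P_t$ depend continuously on $t$, with the branch at the origin decaying like $t^{-1/m}$ --- still sublinear in $t$, which is all that is needed for the imaginary part to remain asymptotically aligned with $u$.
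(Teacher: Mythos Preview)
Your proof is correct and shares the paper's core reduction: writing an arbitrary $u\in\mathcal{H}\cap\Sph^{n-1}$ as $\Im(\zeta\mathbf{p})$ for some $\zeta\in\C^*$, one must exhibit zeros of $f$ going to infinity in the complex direction $\zeta\mathbf{p}$. The paper handles this step tersely, asserting that a sequence of affine zeros converges projectively to $(0:\mathbf{p})$ and that the normalized imaginary parts therefore tend to $\Im(\mathbf{p})/\|\Im(\mathbf{p})\|$; projective convergence alone, however, fixes the limiting direction in $\C^n$ only up to a phase, so one still has to argue that the prescribed direction $\zeta\mathbf{p}$ is actually attained. Your explicit construction via the one-parameter family $t\zeta(\mathbf{p}+\sigma\mathbf{q})$ and the Rouch\'e argument on $P_t$ supplies exactly this control and is entirely self-contained. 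One minor remark: the quantitative rate $|\sigma(t)|=O(t^{-1/m})$ is stronger than required, since already $\sigma(t)\to 0$ yields $t\sigma(t)=o(t)$; applying Rouch\'e on any fixed small disk would suffice.
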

\begin{proof}
	Let $\mathbf{p}_h = (0:p_1:\cdots:p_n)$ be a zero at infinity of $f_h$.
Since $i \mathbf{p}_h$ is also a point at infinity for $f_h$, we can assume
that $\sum_{j=1}^n \Im(p_j)^2 \neq 0$. Since $f$ is non-constant,
there exists a sequence $(\mathbf{p}^{(k)}) = (p_1^{(k)},\ldots, p_n^{(k)})$ of points in $\mathcal{V}(f)$ such that $(1:p_1^{(k)}:\cdots:p_n^{(k)})$ converges to $\mathbf{p}_h$. Hence,
	\[
	\frac{1}{(\sum_{j=1}^n\Im(p_j)^2)^{1/2}}(\Im p_1,\ldots,\Im p_n)
	\] 
	is a limit direction. 
	Multiplying $\mathbf{p}_h$ with a complex number $\mu+i\lambda$, $\lambda,\mu \in \R$ keeps $\mathbf{p}_h$ invariant, and under the imaginary projection it leads to a projected point 
$\Im ( (\mu+i\lambda)\cdot \mathbf{p}) = \mu \Im(\mathbf{p}) + \lambda \Re(\mathbf{p})$.
Considering all complex numbers $\mu + i \lambda \in\C$, these points form the 
subspace $\mathcal{H}$. 
\end{proof}

\begin{example}
	We revisit the polynomial $f(z_1,z_2)=z_1^2-z_2^2-1$; see Figure \ref{fi:ax^2+by^2-1} for its imaginary projection. 
Its homogenization is $f_h(z_0,z_1,z_2)=z_1^2-z_2^2-z_3^2$ whose zeros at infinity are given by the equation $z_1^2-z_2^2=(z_1+z_2)(z_1-z_2)=0$.
For the points $\mathbf{p}_h=(0:1:\pm 1)$,
Theorem \ref{thm:limit-directions-general-case} provides the two one-dimensional lines $\mathcal{H}_{1,2}=\{\lambda(1,\pm1):\lambda\in\R\}$. 
We obtain the intersection $\Sph^1\cap\mathcal{H}_{1,2}=\{(\pm\frac{1}{\sqrt{2}},\pm\frac{1}{\sqrt{2}})\}$.
Indeed, we know by Theorem \ref{th:quadricr2} that $\mathcal{I}(f)=\{\mathbf{y} \in \R^2 \, : \,
y_1^2-y_2^2=0\}$, which
confirms $\mathcal{I}_\infty(f)=\{(\pm\frac{1}{\sqrt{2}},\pm\frac{1}{\sqrt{2}})\}$. 
\end{example}

Theorem \ref{thm:limit-directions-general-case} implies the following statement about bivariate polynomials of arbitrary degree:

\begin{cor} \label{co:bivar-limit}
	Let $f\in\C[z_1,z_2]$ be of total degree $d$ and assume its homogenization $f_h$ has the zeros at infinity $(0:1:a_j)$, $j=1,\ldots,d$. Then,
\[
  \mathcal{I}_\infty(f) \ = \ \begin{cases}
     \bigcup\limits_{j=1}^d \left\{\pm\frac{1}{\sqrt{1+a_j^2}}(1,a_j)\right\} & 
        \text{if all $a_j$ are real,}\\
          \Sph^1 & \text{otherwise.}
   \end{cases}
\]
\end{cor}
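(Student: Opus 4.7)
The plan is to apply Theorem~\ref{thm:limit-directions-general-case} to each zero at infinity $(0:1:a_j)$ of $f_h$, which immediately gives the inclusion $\Sph^1 \cap \mathcal{H}_j \subseteq \mathcal{I}_\infty(f)$, where
\[
  \mathcal{H}_j \ = \ \linspan_\R\{\Re(1,a_j),\, \Im(1,a_j)\} \ = \ \linspan_\R\{(1,\Re a_j),\,(0,\Im a_j)\}.
\]
The key observation is that $\mathcal{H}_j$ is one-dimensional precisely when $a_j\in\R$, and equals $\R^2$ otherwise. Hence if some $a_j$ is non-real, the corresponding $\mathcal{H}_j$ is all of $\R^2$ and we obtain $\Sph^1 \subseteq \mathcal{I}_\infty(f)$. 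Since $\mathcal{I}_\infty(f) \subseteq \Sph^1$ trivially, the ``otherwise'' branch of the corollary is complete.

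Suppose now that all $a_j$ are real. Each $\mathcal{H}_j$ collapses to the line $\R \cdot (1, a_j)$, whose intersection with $\Sph^1$ consists of $\pm (1,a_j)/\sqrt{1+a_j^2}$, which establishes the $\supseteq$ inclusion. For the reverse inclusion, I would take $\mathbf{y} \in \mathcal{I}_\infty(f) \cap \Sph^1$, realized by a sequence $\mathbf{z}^{(k)} = \mathbf{x}^{(k)} + i\mathbf{y}^{(k)} \in \mathcal{V}(f)$ with $\|\mathbf{y}^{(k)}\|\to\infty$ and $\mathbf{y}^{(k)}/\|\mathbf{y}^{(k)}\| \to \mathbf{y}$. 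Since $\|\mathbf{z}^{(k)}\|\to\infty$, compactness in $\P_\C^2$ allows us to assume, after passing to a subsequence, that $(1:z_1^{(k)}:z_2^{(k)})$ converges to a point of the projective closure of $\mathcal{V}(f)$ on the line at infinity, which by hypothesis is some $(0:1:a_j)$; equivalently, $z_2^{(k)}/z_1^{(k)} \to a_j$ with $|z_1^{(k)}| \to \infty$.

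The main obstacle is to transfer this complex asymptotic statement into one about the imaginary parts alone. When the leading direction $\mathbf{z}^{(k)}/\|\mathbf{z}^{(k)}\|$ has non-zero imaginary component, taking imaginary parts and using $a_j\in\R$ immediately forces $\mathbf{y} = \pm(1,a_j)/\sqrt{1+a_j^2}$. The delicate case is when $\|\mathbf{x}^{(k)}\|$ dominates $\|\mathbf{y}^{(k)}\|$, so that $\mathbf{z}^{(k)}/\|\mathbf{z}^{(k)}\|$ tends to a real multiple of $(1,a_j)$; here the plan is to invoke a local Puiseux expansion of the branch of $\mathcal{V}(f_h)$ through $(0:1:a_j)$ of the form $z_2 = a_j z_1 + g(z_1)$ with $g(z_1) = o(z_1)$ as $|z_1|\to\infty$, and to estimate the imaginary parts of the lower-order terms against the divergence $\|\mathbf{y}^{(k)}\|\to\infty$ to still force $y_2^{(k)}/y_1^{(k)} \to a_j$, yielding $\mathbf{y} = \pm(1,a_j)/\sqrt{1+a_j^2}$ and completing the proof.
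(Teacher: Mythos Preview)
Your argument tracks the paper's exactly through the inclusion $\supseteq$ and the non-real case: apply Theorem~\ref{thm:limit-directions-general-case} to each $(0:1:a_j)$ and observe that $\mathcal{H}_j=\R^2$ once $\Im a_j\neq 0$. For the reverse inclusion in the real case the paper simply passes to a subsequence converging projectively to some $(0:1:a_j)$ and stops there; it never explains why projective convergence of $\mathbf{z}^{(k)}$ pins down the direction of $\Im(\mathbf{z}^{(k)})$. So the ``delicate case'' you isolate is a real gap in the paper's own proof, and you are being more careful, not less.

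Your Puiseux plan, however, does not close this gap in full generality, because the statement itself can fail when the $a_j$ are not distinct. Take $f=(z_2-z_1)^2-z_1$, so $a_1=a_2=1$; the points $\mathbf{z}^{(k)}=(-k^2,\,-k^2+ik)\in\mathcal{V}(f)$ have $\Im(\mathbf{z}^{(k)})=(0,k)$, whence $(0,1)\in\mathcal{I}_\infty(f)$, contrary to the claimed set $\{\pm(1,1)/\sqrt 2\}$. Here the branch is $z_2=z_1+z_1^{1/2}$, and $\Im(z_1^{1/2})$ is not dominated by $y_1$ --- precisely where your estimate ``$\Im g(z_1)$ against the divergence of $\|\mathbf{y}^{(k)}\|$'' breaks. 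If you read the hypothesis as $d$ \emph{distinct} points at infinity (as the paper implicitly does when it says the curve has ``$d$ points at infinity''), then each $(0:1:a_j)$ is a smooth transversal intersection, the branch has an honest Laurent tail $z_2=a_j z_1+c_0+c_1 z_1^{-1}+\cdots$, so $y_2-a_j y_1$ stays bounded, and your conclusion $\mathbf{y}^{(k)}/\|\mathbf{y}^{(k)}\|\to\pm(1,a_j)/\sqrt{1+a_j^2}$ follows cleanly.
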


Note that by changing coordinates, zeros of the form $(0:0:b_j)$ with
$b_j \neq 0$ are covered by the statement as well.

\begin{proof}
	Assume first that there is an $a_j$ with $\Im(a_j) \neq 0$, say $a_1$. Then, by Theorem \ref{thm:limit-directions-general-case}, the subspace
 $\mathcal{H} = \{ \lambda (1,\Re(a_1)) + \mu(0,\Im(a_1)) \, : \,
  \lambda, \mu \in \R\}$ is two-dimensional and thus the set of limit directions is
  $\mathcal{H}\cap\Sph^1=\Sph^1$.

	If all $a_j$ are real, then all the subspaces $\mathcal{H}_j$ corresponding to the points 
$(0:1:a_j)$ are one-dimensional. The intersection $\mathcal{H}_j\cap\Sph^1$ contains the
points $\pm\frac{1}{\sqrt{1+a_j^2}}(1,a_j)$. 

In order to show that there are no further limit directions,
let $(\mathbf{p}^{(n)})_{n \in \N}$ be a sequence of points in $\mathcal{V}(f)$ with
$\|\Im(\mathbf{p}^{(n)})\|_2 \to \infty$. Since the curve $\mathcal{V}(f)$ has only a finite
number of points in the plane at infinity, namely $d$, the sequence $(\mathbf{p}^{(n)})_{n \in \N}$
can be decomposed
into $d$ disjoint subsequences $(\mathbf{q}_1^{(n)}), \ldots, (\mathbf{q}_l^{(n)})$ (some of them possibly 
contain only finitely
many elements) such that any infinite sequence $(\mathbf{q}_j^{(n)})$ converges to the projective
point $(0:1:a_j)$. 
\end{proof}

\begin{example}
	Let $f(z_1,z_2)=z_1^2+z_2^2+1$. Then the zeros of $f_{h}$ at infinity are determined
by the equation $z_1^2+z_2^2=0$, giving the two zeros $(0:1:\pm i)$. 
Since the third coordinate is purely imaginary, any point on 
$\Sph^{1}$ is a limit direction, as already visualized
in the left picture of Figure \ref{fi:top1}.
\end{example}

\section{Open questions\label{se:openquestions}}

In this paper, we have introduced and developed the foundations of the imaginary projection of complex polynomial zero sets.
A central open question is
whether there exists an order map which distinguishes 
the different components of the complement, as in the case of amoebas. 
For coamoebas such an order map is known only in special cases so far 
(see \cite{forsgard-johansson-2015}). Moreover, no sharp upper bound 
(as a function of the underlying Newton polytope) is known for 
the number of components of the complement of an imaginary projection.

It is also an open problem to provide effective criteria for general $n$
to decide whether all points on the sphere are limit directions
of the imaginary projection of an $n$-variate polynomial.

\medskip

\noindent
{\bf Acknowledgment.} We would like to thank the referees for
helpful comments.

\bibliography{bibstable}
\bibliographystyle{plain}

\end{document}